\def\rr{\mathbb{R}} 
\def\sF{\mathcal{F}} 
\def\Pr{\mathbb{P}} 
\def\EX{\mathbb{E}}	
\def\dif{\mathrm{d}} 
\def\landau{\mathcal{O}} 
\def\part{\mathcal{T}_{\mathbf{h}}^N} 
\def\e{\textnormal{e}} 
\def\eps{\epsilon}
\def\ntilde{\widetilde}
\def\nhat{\widehat}
\def\nbar{\overline}
\def\pindex{\mathcal{P}}
\def\vz{x}
\newcommand*{\bigtimes}{\mathop{\raisebox{-.0ex}{\hbox{\Large{$\times$}}}}}
\theoremstyle{definition}
\newtheorem{defn}{Definition}[section]
\newtheorem{rem}{Remark}[section]
\theoremstyle{plain}
\newtheorem{thm}{Theorem}[section]
\newtheorem{lem}{Lemma}[section]
\newtheorem{prop}{Proposition}[section]
\numberwithin{equation}{section}
\newenvironment{enumerateI}
{\setlength{\leftmargini}{2.0em}
\begin{enumerate}[align=left]
  \setlength{\labelwidth}{0.5em}
  \setlength{\labelsep}{1.5em}
  \setlength{\itemsep}{-0pt}
  \setlength{\parsep}{0pt}  
  
  }
{\end{enumerate}}
\title{\Large\textbf{Limit theorems for infinite-dimensional piecewise deterministic Markov processes. Applications to stochastic excitable membrane models}\footnote{This work has been supported by the Agence Nationale de la Recherche through the ANR Project MANDy ``Mathematical Analysis of Neuronal Dynamics'' ANR-09-BLAN-0008.}}
\author{\normalsize\textsc{Martin G.~Riedler}\\
\small\emph{Johannes Kepler Universit\"at Linz, Institute of Stochastics}\\
\small\texttt{martin.riedler@jku.at}\\
\small\phantom{i}\\
\normalsize\textsc{Mich\`ele Thieullen}\\
\small\emph{Universit\'e Pierre et Marie Curie - Paris 6, Laboratoire de Probabilit\`es et Mod\'eles Al\`eatoires}\\
\small\texttt{michele.thieullen@upmc.fr}\\
\small\phantom{i}\\
\normalsize\textsc{Gilles Wainrib}\\
\small\emph{Universit\'e Paris 13, Laboratoire Analyse G\'eom\'etrie et Applications}\\
\small\texttt{wainrib@math.univ-paris13.fr}\\
}
\date{}
\begin{document}

\maketitle 

\begin{center}
\begin{minipage}{0.8\textwidth}
\textbf{Abstract:} We present limit theorems for a sequence of Piecewise Deterministic Markov Processes (PDMPs) taking values in a separable Hilbert space. This class of processes provides a rigorous framework for stochastic spatial models in which discrete random events are globally coupled with continuous space-dependent variables solving partial differential equations, e.g., stochastic hybrid models of excitable membranes. We derive a law of large numbers which establishes a connection to deterministic macroscopic models and a martingale central limit theorem which connects the stochastic fluctuations to diffusion processes. As a prerequisite we carry out a thorough discussion of Hilbert space valued martingales associated to the PDMPs. Furthermore, these limit theorems provide the basis for a general Langevin approximation to PDMPs, i.e., stochastic partial differential equations that are expected to be similar in their dynamics to PDMPs. We apply these results to compartmental-type models of spatially extended excitable membranes. Ultimately this yields a system of stochastic partial differential equations which models the internal noise of a biological excitable membrane based on a theoretical derivation from exact stochastic hybrid models.\medskip

\emph{Keywords:} Piecewise Deterministic Markov Processes; infinite-dimensional stochastic processes; law of large numbers; central limit theorem; neuronal membrane models; random excitable media\medskip

\emph{MSC 2010:} 60B12; 60F05; 60J25; 92C20; 
\end{minipage}
\end{center}

\section{Introduction}

In this study we present limit theorems for sequences of Piecewise Deterministic Markov Processes (PDMPs) with values in a separable Hilbert space. PDMPs are a particular class of c\`adl\`ag, strong Markov processes which combine continuous deterministic time evolution and discontinuous, instantaneous, random `jump' events. We note that in view of applications this paper is ultimately motivated by the interest in the derivation of a justifiable Langevin approximation to spatio-temporal stochastic hybrid models of excitable membranes, e.~g., neuronal membranes. This is accomplished by the limit theorems we present in the following.

We start briefly introducing the general idea of our framework and the main results which are made precise in the subsequent sections. We consider a family of fully coupled, Hilbert space-valued PDMPs indexed by $n\in\mathbb{N}$. Here fully coupled means that the PDMPs which split into a continuously moving and a piecewise constant component are such that the jump rates of the processes depend on the state of the full system and the continuous dynamics depend on the state of the jump component. For the limit theorems we rely on two key assumptions. Firstly, jumps possess heights decreasing to zero for $n\to\infty$ but occur at an increasing frequency roughly inversely proportional to the jump size. We are therefore in the {\it fluid limit} setting, cf.~\cite{Kurtz1,Kurtz2}. Secondly, we assume that for each $n$ the continuous dynamics in between jumps depend on the piecewise constant component only via a \emph{finite} set of (Hilbert space-valued) functions thereof, which we call \emph{coordinate functions}. It is the sequence of coordinate functions coupled to the continuous component for which we derive limits. 
The first limit theorem we present is a weak law of large numbers for PDMPs in infinite-dimensional Hilbert spaces where the deterministic limit is given by a solution of an abstract evolution equation. Next we proceed to the presentation of a central limit theorem for the martingales associated with a PDMP. This central limit theorem gives the basis for an approximation of PDMPs by diffusion processes which are solutions of stochastic partial differential equations. Finally, we show how to represent the stochastic process arising as the limit in the central limit theorem as a solution of a stochastic partial differential equation (SPDE) which then yields a Langevin approximation for PDMPs by a system of SPDEs. The new results presented extend previous results for PDMPs and pure jump processes in Euclidean space \cite{Kurtz2,EthierKurtz,Wainrib1}.
%
%
%
%
The difficulties in extending the fluid limit theorems in \cite{Kurtz1,Kurtz2,Wainrib1} to processes taking values in infinite-dimensional Hilbert spaces lie, on the one hand, in the appropriate treatment of Hilbert space-valued martingales. These arise by splitting a PDMP, being a semi-martingale, into a sum of a part with finite variation and a local martingale. As these considerations are essential we have devoted a full section, Section \ref{section_associated_martingale}, to the discussion of the martingales. 
On the other hand, the more intricate existence theory of solutions to abstract evolution equations compared to solutions of ordinary differential equations in Euclidean space demands for additional technical rigour. 


We apply our theoretical findings to spatially extended hybrid models of excitable membranes. A first hybrid formulation of one such model in the context of neuroscience was presented in \cite{Austin} and reformulated and extended as examples for PDMPs taking values in infinite-dimensional Hilbert spaces in \cite{BuckwarRiedler}. 
%
%
%
%
%
%
%
%
%
%
For example, the Hodgkin-Huxley model is a deterministic, macroscopic model for the coupled evolution of the neuronal membrane potential and the averaged gating dynamics of ion channels \cite{HodgkinHuxley52}. 
More realistically, the membrane potential, which is the macroscopically observed variable of interest, arises from the stochastic dynamics of finitely many ion channels. Thus the application of our limit theorems shows that the Hodgkin-Huxley is obtained as the limit of a sequence of stochastic microscopic models taking the form of Hilbert space valued PDMPs in the sense of a law of large numbers. Conceptually, here the fluid limit corresponds to increasing the number of ion channels while simultaneously decreasing the individual influence of an individual channel on the total current. The martingale central limit theorem can then be used to define the Langevin approximation providing a relatively simple stochastic version of the Hodgkin-Huxley model incorporating internal fluctuations.


Concluding this introduction, we comment on related work to fluid limits in the infinite-dimensional setting. Averaging for PDMPs in infinite-dimension, in particular for the neuron model introduced in \cite{Austin}, wherein also a law of large numbers was considered, has been recently considered in \cite{GenadotThieullen}. For a model of linear chemical reactions by jump Markov processes a law of large numbers \cite{ArnoldTheo} and a central limit theorem \cite{Kotelenez1} have been proven based on the original work of \cite{Kurtz1,Kurtz2} for finite-dimensional jump-processes. In these cases the deterministic limit is a reaction-diffusion partial differential equation and the central limit theorem yields diffusion processes given by stochastic partial differential equations. Limit theorems for variations of this model have been investigated in two series of studies, cf.~\cite{Kotelenez2,Kotelenez3,Kotelenez4} and \cite{Blount1,Blount2,Blount3,Blount4,Blount5}. A central difference between spatial models of excitable media to models of chemical reactions is that the latter exhibit diffusive motion of the reactants ($\sim$ channels) which is absent in the former. Additionally excitable media equations exhibit non-local interaction of channels as their dynamics are coupled globally via the membrane potential. The limit theorems we establish have to account for these differences. Further, there is also a difference on the technical side. The technique employed in \cite{Kotelenez1} and in all subsequent publications cited above is based on the semigroup approach to stochastic / deterministic evolution equations. In contrast, we pursue in the present paper the approach of a weak formulation. At large, the weak formulation of evolution problems allows to consider more general equations as when dealing with mild, strong or classical solutions, cf.~a discussion of this aspect in \cite[Chap.~23.1]{Zeidler}. 
Finally, we also mention a central limit theorem for Hilbert-valued martingales in \cite{Metivier} and a diffusion approximation of SPDEs on nuclear spaces driven by Poisson random measures in \cite{Kallianpur}. The methods of proof we employ for the theoretical results in this study are motivated by the two last references, but differ as the classes of stochastic processes considered therein and in the present manuscript are different.

\medskip

The remainder of the paper is organised as follows. We first briefly define PDMPs in Section \ref{sec_PDMP} and precisely state the structure for a sequence of such PDMPs to allow for a limit. Then we discuss in detail the associated martingale process in Section \ref{section_associated_martingale}. Limit theorems and the diffusion approximation are presented in Sections \ref{Sec_limitThm} and \ref{section_the_CLTs}. We have deferred the proofs of the main results to Section \ref{section_all_proofs}. Next in Section \ref{section_application} we discuss applications of these limit theorems to compartmental-type models of excitable membranes where the proofs of the conditions are deferred to Appendix \ref{section_9_proof_compart_model}. The paper is concluded in Section \ref{section_conclusions} with a brief discussion and an outlook on further developments and applications. Finally, the Appendix \ref{section_ito_iso_banach_proof} of the paper contains the proof of the technical Theorem \ref{banach_martingale_is_square_int} that guarantees the square-integrability of the associated Hilbert space valued martingales and establishes an appropriate It{\^o}-isometry.

\section{Piecewise Deterministic Markov Processes}\label{sec_PDMP}

In the first subsection we briefly define PDMPs and, in particular, discuss the specific subclass of PDMPs for which we present limit theorems in this study. For a general discussion of PDMPs we refer to the monographs \cite{Davis2,Jacobsen} and, specifically, for Hilbert space valued PDMPs associated to solutions of partial differential equations we refer to \cite{BuckwarRiedler,RiedlerPhD}. In the second subsection we present the sequence of PDMPs for which the limits are analyzed in this study. Finally, a notational remark: in this paper pairings $(\cdot,\cdot)$ and $\langle\cdot,\cdot\rangle$ denote the inner product or the duality pairing, respectively, with respect to a certain Hilbert space which is usually indicated with a subindex. Further, $\ast$ is used to denote dual spaces.

\subsection{PDMPs on Hilbert spaces}\label{general_PDMP_section}

Let $(\Omega,\sF,(\sF_t)_{t\geq 0},\Pr)$ denote a filtered probability space satisfying the usual conditions, $X\subset H\subset X^\ast$ be an evolution triple of separable real Hilbert spaces and $K$ be a countable set of isolated states. The product $H\times K$ serves as the state space for a PDMP. Then, a PDMP is a c\`adl\`ag strong Markov process $X_t(\omega)=(U_t(\omega),\Theta_t(\omega))\in H\times K$ for all $t\geq 0$ which consists of two components. The first, $U_t$, takes values in $H$, possesses continuous sample paths and is denoted the \emph{continuous component} of the PDMP. The second, $\Theta_t$, taking values in $K$ and possessing right-continuous, piecewise constant sample paths, we call its \emph{jump component}. We say a PDMP is \emph{regular} if the number of jumps of $\Theta_t$ is a.s.~finite in every finite time interval $[0,T]$. In this study PDMPs are always regular.

We next state the mechanisms which govern the time evolution of the paths of such a PDMP. Firstly, there exist for each $\theta\in K$ an abstract evolution equation
\begin{equation}\label{abstract_ODE}
\dot{u}= A(\theta)\,u+B(\theta,u)
\end{equation}
where $A(\theta):X\to X^\ast$ is a linear and $B(\theta,\cdot): X\to X^\ast$ a (possibly nonlinear) operator. We assume that the family of abstract evolution equations \eqref{abstract_ODE} is well-posed, i.e., given any $\theta\in K$ and any initial condition $u\in H$ there exists a unique global weak solution $\phi(\cdot,(u,\theta))\in L^2((0,T),X)\cap H^1((0,T),X^\ast)$ depending continuously on the initial condition. Note, that the regularity implies $\phi(\cdot,(u,\theta))\in C([0,T],H)$, cf.~\cite[Chap.~11]{RenardyRogers}. Then the trajectory of the continuous component $U_t$ follows in between jumps of the jump component $\Theta_t$ the weak solution to \eqref{abstract_ODE} corresponding to the parameter $\theta$ given by the current state of the jump component. That is, for $\tau_k,\,k\in\mathbb{N}$, denoting the jump times of the PDMP we have that
\begin{equation*}
U_t=\phi(t-\tau_k,(U_{\tau_k},\Theta_{\tau_k})) \qquad\forall\,t\in[\tau_k,\tau_{k+1})\,.
\end{equation*}
Secondly, describing the stochastic transition dynamics of the jump component $\Theta_t$ there exist measurable transition rates $\Lambda: H\times K\to\rr_+$ that define the distributions of the random jump time of $\Theta_t$ in the sense that for all $\theta\in K$
\begin{equation}\label{def_pdmp_survivor_function}
\Pr\bigl[\Theta_{t+s}=\Theta_t,\, 0\leq s\leq\Delta t\,\big|\, \Theta_t\bigr]=\textnormal{exp}\Bigl(-\int_0^{\Delta t} \Lambda(U_{t+s},\Theta_t)\,\dif s\Bigr)\,.
\end{equation}
In view of \eqref{def_pdmp_survivor_function} we assume that $\Lambda$ is integrable along the solutions of \eqref{abstract_ODE} on any finite time interval, i.e.,
\begin{equation*}
\int_0^T \Lambda(\phi(t,(u,\theta)),\theta)\,\dif t <\infty\qquad\forall\,T<\infty
\end{equation*}
for all $\theta\in K$ and all initial conditions $u\in H$, but diverging as $T\to\infty$. We note that in applications we usually find that the transition rate $\Lambda$ is bounded which implies the regularity of the PDMP. Finally, there exists a Markov kernel $\mu$ on $H\times K$ into $K$ that gives the distribution of the post jump value, i.e.,
\begin{equation}
\Pr\bigl[\Theta_t=\xi\,|\,\Theta_t\neq \Theta_{t-}\bigr] = \mu\bigl((U_t,\Theta_{t-}),\{\xi\}\bigr)\quad\forall\,\xi\in K\,.
\end{equation}
The elements of the quadruple $(A,B,\Lambda,\mu)$ are called the \emph{characteristics} of the process and under the above conditions define a regular PDMP uniquely (up to versions). 
Furthermore, under these conditions the following result characterising the extended generator of PDMPs is proven in \cite{BuckwarRiedler,RiedlerPhD}.

\begin{thm}\label{PDMP_gen_theorem} A function $f: H\times K\to\rr$ is in the domain of the extended generator of a PDMP if the mapping $t\mapsto f(U_t,\Theta_t)$ is absolutely continuous almost surely and the mapping $(\xi,s,\omega)\mapsto f(U_{s-},\xi)-f(U_{s-}(\omega),\Theta_{s-}(\omega))$ is integrable with respect to the random measure $\Lambda(U_{s-},\Theta_{s-})\mu\bigl((U_{s-},\Theta_{s-}),\dif\xi\bigr)\dif s$. 

Moreover, if in addition $f$ is continuously Fr\'echet-differentiable with respect to its first argument such that the Riesz Representation\footnote{Note that the Fr\'echet derivative at a point $u\in H$ is a linear, bounded functional on $H$ and thus an element of the dual $H^\ast$.} $f_u\in H$ of the Fr\'echet derivative satisfies $f_u(u,\theta)\in X$ for $u\in X$ and is a locally bounded composition operator in $L^2((0,T),X)$,\footnote{An example of such a function $f$ is $(u,\theta)\mapsto\|u\|_H^2$ in which case $f_u(u,\theta)=2 u$.} then the extended generator $\mathcal{A}f$ is given by
\begin{equation}\label{infindimGen}
\mathcal{A}f(u,\theta)=\bigl\langle A(\theta)\,u+B(\theta,u), f_u(u,\theta)\bigr\rangle_X+\Lambda(u,\theta)\int_K \Bigl(f(u,\xi)-f(u,\theta)\Bigr)\,\mu\bigl((u,\theta),\dif \xi\bigr)\,.
\end{equation}
\end{thm}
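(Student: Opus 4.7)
My plan is to decompose the sample path of $f(X_t)-f(X_0)$ along the sequence of jump times $\tau_k$ of the PDMP. Between consecutive jumps, $U_t=\phi(t-\tau_k,(U_{\tau_k},\Theta_{\tau_k}))$ and $\Theta_t=\Theta_{\tau_k}$ is constant, so the only contribution to the increment on $[\tau_k,\tau_{k+1})$ is the continuous piece given by the flow of \eqref{abstract_ODE} with parameter $\Theta_{\tau_k}$. At each $\tau_k$ we pick up the discrete increment $f(U_{\tau_k},\Theta_{\tau_k})-f(U_{\tau_k},\Theta_{\tau_k-})$. Telescoping gives the pathwise identity
\begin{equation*}
f(X_t)-f(X_0)=\int_0^t\frac{\dif}{\dif s}f(U_s,\Theta_s)\,\dif s+\sum_{\tau_k\leq t}\bigl[f(U_{\tau_k},\Theta_{\tau_k})-f(U_{\tau_k},\Theta_{\tau_k-})\bigr]
\end{equation*}
under the stated absolute continuity of $t\mapsto f(U_t,\Theta_t)$. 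By the (Davis) definition of the extended generator it then suffices to show that the right-hand side equals $\int_0^t\mathcal{A}f(X_s)\,\dif s$ plus a local martingale.

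For the jump term I would invoke the standard predictable compensator of the integer-valued random measure counting jumps of a PDMP: by \eqref{def_pdmp_survivor_function} and the construction of $(\Lambda,\mu)$, this compensator is precisely $\Lambda(U_{s-},\Theta_{s-})\mu((U_{s-},\Theta_{s-}),\dif\xi)\,\dif s$, as shown e.g.~in \cite{Jacobsen,Davis2}. The integrability hypothesis on the mapping $(\xi,s,\omega)\mapsto f(U_{s-},\xi)-f(U_{s-},\Theta_{s-})$ then implies that the compensated jump sum is a local martingale, and subtracting the compensator produces exactly the integral term of \eqref{infindimGen}. This already establishes the first (domain-membership) assertion.

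For the continuous piece under the additional smoothness hypothesis I would apply the abstract chain rule in the Gelfand triple $X\subset H\subset X^\ast$: since $\phi(\cdot,(u,\theta))\in L^2((0,T),X)\cap H^1((0,T),X^\ast)$ and $f_u(\cdot,\theta)$ is assumed to map $X$ into $X$ pointwise and to be a locally bounded composition operator on $L^2((0,T),X)$, the map $t\mapsto f(\phi(t,(u,\theta)),\theta)$ is absolutely continuous with derivative $\langle A(\theta)\phi(t)+B(\theta,\phi(t)),f_u(\phi(t),\theta)\rangle_X$. This is the natural generalisation of the classical identity $\frac{\dif}{\dif t}\|u(t)\|_H^2=2\langle\dot{u}(t),u(t)\rangle_X$ and is proved by smooth-in-time mollification exactly as in \cite[Chap.~23]{Zeidler} or \cite[Chap.~11]{RenardyRogers}. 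Summing over $[\tau_k,\tau_{k+1})$ identifies the absolutely continuous piece with the first term of \eqref{infindimGen}.

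The main technical obstacle is the chain rule step: a scalar $C^1$ function need not preserve $L^2((0,T),X)$-regularity of its Fr\'echet derivative when composed with a weak solution, so the assumption that $f_u$ is $X$-valued on $X$ and locally bounded as a composition operator on $L^2((0,T),X)$ is essential rather than cosmetic, and has to be used at precisely this point to make the pairing $\langle\dot{\phi},f_u(\phi,\theta)\rangle_X$ integrable and equal to the desired time derivative. Everything else is bookkeeping: combining the two pieces yields $f(X_t)-f(X_0)-\int_0^t\mathcal{A}f(X_s)\,\dif s$ as a local martingale, which gives both assertions of the theorem.
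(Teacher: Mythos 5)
The paper does not actually prove Theorem \ref{PDMP_gen_theorem}; it states the result and defers the proof to \cite{BuckwarRiedler,RiedlerPhD}, so there is no in-text argument to compare against. Your proposal is the standard Davis-type argument that those references employ: pathwise decomposition along the jump times into an absolutely continuous piece plus compensated jumps, with the compensator $\Lambda(U_{s-},\Theta_{s-})\mu((U_{s-},\Theta_{s-}),\dif\xi)\,\dif s$ handling the discrete part and the Gelfand-triple chain rule (legitimised exactly by the composition-operator hypothesis on $f_u$, as you correctly emphasise) identifying the continuous part with $\langle A(\theta)u+B(\theta,u),f_u(u,\theta)\rangle_X$. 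The only point worth making explicit is that the hypothesis of absolute continuity of $t\mapsto f(U_t,\Theta_t)$ must be read as holding along the flow between jumps (the map is discontinuous at the $\tau_k$), and that the density of the absolutely continuous piece is a function of the current state $(U_s,\Theta_s)$ alone because the inter-jump motion is the deterministic flow; with that reading your argument is sound.
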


\subsection{An appropriate sequence of PDMPs}\label{limit_appr_pdmps}

Let $E$ denote another separable real Hilbert space. Further, for a certain $m\in\mathbb{N}$ (its significance is explained in the next paragraph) we denote by $\mathcal{H}=\bigtimes_{i=1}^m H$, $\mathcal{E}=\bigtimes_{i=1}^m E$ the direct products of the Hilbert spaces $H$ and $E$ which are Hilbert spaces themselves. Finally we set $\mathcal{E}^\ast=\bigtimes_{j=1}^m E^\ast$ which is the dual space to $\mathcal{E}$.

We now define the structure of the sequence of processes for which we derive the limit theorems. For all $n\in\mathbb{N}$ let $(\Omega^n,\sF^n,(\sF_t^n)_{t\geq 0},\Pr^n)$ be a a filtered probability space satisfying the usual conditions and the processes $(X_t^n)_{t\geq 0}=(U^n_t,\Theta^n_t)_{t\geq 0}$ defined thereon are regular PDMPs taking values in $H\times K_n$ with path properties as defined in Section \ref{general_PDMP_section}. Correspondingly, the characteristics of the PDMPs are given by $(A^n,B^n,\Lambda^n,\mu^n)$. Note that the state space $K_n$ for the piecewise constant component changes with varying index $n$ whereas the state space $H$ for the continuous component remains fixed. Therefore, in order for such a sequence of processes to allow for a limit we need to impose a special structure on the characteristics referring to the continuous component. To this end we assume there exists an $m\in\mathbb{N}$, introduced above, such that for each PDMP $(U^n_t,\Theta^n_t)_{t\geq 0}$ there exists a family of measurable \emph{coordinate functions} $z_i^n:K_n\to E,\, i=1,\ldots,m$, such that the characteristics $A^n(\theta), B^n(\theta)$ depend on the piecewise constant component and on the index $n$ only via the $\mathcal{E}$--valued \emph{coordinate process} $z^n(\theta)=(z_1^n(\theta)\,,\ldots,\,z^n_m(\theta))$. That is, there exist measurable operators $A,B:\mathcal{E}\times X\to X^\ast$ such that for all $n\in\mathbb{N}$, all $u\in H$ and all $\theta\in K_n$
\begin{equation}\label{uniform_operators}
A^n(\theta)\, u\,=\,A(z^n(\theta))\, u, \quad B^n(\theta,u)\,=\,B(z^n(\theta),u).
\end{equation}
The coordinates $z^n$ can be interpreted as a `sufficient statistic' of the piecewise constant component for the evolution of the continuous component. In statistics a sufficient statistic for a quantity of interest is a function of the observations that is sufficient to estimate this particular quantity. For example, the sample average of independently and identically distributed real random variables is a sufficient statistic for the mean of their distribution. In the present setting, this means that the coordinate functions contain all information about the vector $\theta$ that is needed to determine the continuous dynamics in between jumps. 
Further, the essence of the subsequent limit theorems is that the sequence of coordinate processes on the space $\mathcal{E}$ allows for a limit under certain conditions. Typically, in applications one is interested in the dynamics of the continuous components only, thus a restriction of the attention to the coordinate functions is well justified. 
As $\mathcal{E}$ is a (vector-valued) Hilbert space itself, no generality would be lost if instead of the family of coordinate functions we assumed the existence of Hilbert space-valued functions $z^n$ taking values in the same Hilbert space for each $n$. However, we decided to use this more detailed notation since in examples one usually encounters that it is a set of coordinate functions that encodes the information necessary for defining the dynamics of the continuous component.

In order to illustrate this set-up let us briefly discuss the Hodgkin-Huxley model as an example of the general excitable membrane model considered in Section \ref{section_application}. Here the sequence of abstract evolution equations \eqref{abstract_ODE} arises from parabolic partial differential equations modelling the space-time evolution of the membrane potential of the form
%
\begin{equation}\label{Hodgkin_huxley_system}
\dot u(t,x) = \Delta u(t,x) + \sum_{i=\textnormal{Na,K,L}}\nbar g_i p_i(t,x) \bigl(E_i-u(t,x)\bigr),\quad t\geq 0,\, x\in D \subset\rr^d 
\end{equation}
with constants $\nbar g_i>0$ and $E_i\in\rr$, cf.~\eqref{limit_theorem_apl_stoch_cable}. The indices refer to electrical currents due to the movement of charged Sodium (Na) and Potassium (K) ions across the membrane and ohmic leakage (L) current mainly due to Chloride ions \cite{Koch}. In hybrid versions of the Hodgkin-Huxley system the conductances $p_i(t,x)$ depend on the finite number of open ion channels distributed in the membrane which increases with $n$. Each individual channel is modelled stochastically opening or closing at random times with dynamics depending on $u$, cf.~Section \ref{subsection_compartmental_type_models} for more details. In the case of constant potential $u(t,x)\equiv \nbar u$ each channel were a continuous time Markov chain. The collection of channel states at any time instant $t$ defines the discrete component $\Theta^n_t$. Finally, the coordinate functions $z^n$ relate channels in a specific state to their location in the physical space $D$, cf.~their definition in \eqref{Arnold_coord_fct}. They map the channel configurations into piecewise constant space-time functions stating the local density of channels in the particular states, thus $p_i^n(t,x):=z^n_i(\Theta^n_t)\in L^2(D)=E$. Hence, equipped with suitable boundary conditions equation \eqref{Hodgkin_huxley_system} is an abstract evolution equation of the type \eqref{abstract_ODE} where the Hilbert spaces $H,\,X$ and $E$ are spaces of real functions on $D\subset\rr^d$.


\section{The associated martingale process}\label{section_associated_martingale}

For the limit theorems we derive in this paper, the main estimation procedures concern certain martingales associated with the PDMP. As these are of such central importance we discuss them in this separate section. The principle aim is, on the one hand, to derive conditions that imply the convergence in probability of the associated martingales as needed for the law of large numbers (cf.~condition \eqref{uniform_martingale_bound} in Theorem \ref{LLN}) and, on the other hand, we present some necessary structure for the central limit theorems.
Therefore we define for all $j=1,\ldots,m$ the $E$-valued stochastic process $M^n_j$ by
\begin{equation}\label{local_martingale_2}
M_j^n(t):=z_j^n(\Theta_t^n)-z_j^n(\Theta_0^n)-\int_0^t \bigl[\mathcal{A}^n\langle\,\cdot\,,z_j^n(\cdot)\rangle_E\bigr](U^n_s,\Theta^n_s)\,\dif s,
\end{equation}
where the integrand in the right hand side is given by
\begin{eqnarray}
\bigl[\mathcal{A}^n\langle\,\cdot\,,z_j^n(\cdot)\rangle_E\bigr](U^n_s,\Theta^n_s)\!\!&=&\!\!\Lambda^n(U_s^n,\Theta^n_s)\int_{K_n} \,
\Bigl(z_j^n(\xi)-z_j^n(\Theta_s^n)\Bigr)\,\mu^n\bigl((U^n_s,\Theta^n_s),\dif \xi\bigr)\nonumber\\
&=&\!\! \Lambda^n(U_s^n,\Theta_s^n)\sum_{\xi\in K_n}\Bigl(z_j^n(\xi)- z_j^n(\Theta_s^n)\Bigr)\,\mu^n\bigl((U^n_s,\Theta^n_s),\{\xi\}\bigr)\,.
\phantom{xxxx}\label{new_equation_MT_sugg}
\end{eqnarray}
Hence the integrand is a countable convex combination of elements in $E$ with time-dependent coefficients and in between jumps it depends continuously on $s$. Anticipating condition \eqref{LLN_martingale_bound_first_lemma} below, which we generally assume to hold, we find that the integral in the right hand side of \eqref{local_martingale_2} almost surely exists in the sense of Bochner. For a brief discussion of the Bochner integral we refer to \cite[App.~A]{PrevotRoeckner}. 
For an application of a functional $\phi\in E^\ast$ to \eqref{local_martingale_2} we obtain
\begin{equation}\label{local_martingale}
\langle\phi,M_j^n(t)\rangle_E=\langle\phi,z_j^n(\Theta^n_t)\rangle_E-\langle\phi,z_j^n(\Theta^n_0)\rangle_E-\int_0^t \bigl[\mathcal{A}^n\langle\phi,z_j^n(\cdot)\rangle_E\bigl](U_s^n,\Theta^n_s)\, \dif s,
\end{equation}
where the integrand is
\begin{equation*}
\bigl[\mathcal{A}^n\langle\phi, z_j^n(\cdot)\rangle_E\bigr](U^n_s,\Theta^n_s)=\Lambda^n(U_s^n,\Theta^n_s)\!\!\int_{K_n}\!\!
\langle\phi, z_j^n(\xi)\rangle_E-\langle\phi, z_j^n(\Theta_s^n)\rangle_E\,\mu^n\bigl((U^n_s,\Theta^n_s),\dif \xi\bigr).
\end{equation*}
Thus the integral has the form of the extended generator, cf.~Theorem \ref{PDMP_gen_theorem}, applied to the mapping $(u,\theta)\mapsto\langle\phi,z^n_j(\theta)\rangle_E$. This already suggests that the processes \eqref{local_martingale} are martingales under suitable boundedness conditions. 
In fact we are able to establish that the processes $M^n_j$ are $E$--valued c\`adl\`ag martingales. We refer to \cite{DaPratoZabczyk,PrevotRoeckner} for a brief discussion of martingales in infinite-dimensional spaces. The easiest way to validate the martingale property is due to the following result \cite[Sec.~2.3]{PrevotRoeckner}: If $\EX^n\|M^n_j(t)\|_E< \infty$ for all $t\in[0,T]$, the Hilbert space-valued martingale property holds if and only if $\langle\phi,M^n_j(t)\rangle_E$ is a real-valued martingale for all $\phi\in E^\ast$. The following theorem gives a condition that guarantees that the processes \eqref{local_martingale_2} are square-integrable martingales and satisfy an It{\^o}-isometry. The proof is rather technical and thus we have deferred it to the Appendix \ref{section_ito_iso_banach_proof}.

\begin{thm}\label{banach_martingale_is_square_int} Let $n\in\mathbb{N}$ be fixed and assume that for all $t>0$ it holds that
\begin{equation}\label{LLN_martingale_bound_first_lemma}
\EX\,\int_0^t\Bigl[\Lambda^n(U^n_s,\Theta^n_s)\int_{K_n} \|z^n_j(\xi)-z^n_j(\Theta^n_s)\|_E^2\,\mu^n\bigl((U^n_s,\Theta^n_s),\dif\xi\bigr)\Bigr]\,\dif s\,<\infty\,.
\end{equation}
Then the process $M^n_j$ is a square-integrable martingale and satisfies the \emph{It{\^o}-isometry}
\begin{equation}\label{banach_ito_isometry}
\EX^n\|M^n_j(t)\|_E^2 = \int_0^t\EX^n\Bigl[\Lambda^n(U^n_s,\Theta^n_s)\int_{K_n} \|z^n_j(\xi)-z^n_j(\Theta^n_s)\|_E^2\,\mu^n\bigl((U^n_s,\Theta^n_s),\dif\xi\bigr)\Bigr]\,\dif s\,. 
\end{equation}
\end{thm}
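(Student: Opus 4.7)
The plan is to reduce the Hilbert-valued claim to the scalar case, invoke the standard theory of jump martingales for PDMPs to obtain a one-dimensional It\^o isometry for each $\phi\in E^\ast$, and then reassemble using an orthonormal basis of $E$ together with Parseval's identity.

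For the scalar step I would apply Theorem \ref{PDMP_gen_theorem} to the functional $f(u,\theta):=\langle\phi,z^n_j(\theta)\rangle_E$. Since $f$ is independent of $u$, the Fr\'echet-derivative contribution in \eqref{infindimGen} vanishes and $\mathcal{A}^n f$ reduces to the integrand appearing in \eqref{local_martingale}, so that $\langle\phi,M^n_j(t)\rangle_E$ is a real-valued local martingale whose only jumps coincide with those of $\langle\phi,z^n_j(\Theta^n_t)\rangle_E$. As a purely discontinuous local martingale, its predictable quadratic variation is obtained from the PDMP compensator, or equivalently by applying Theorem \ref{PDMP_gen_theorem} to $f^2$ and isolating the drift part, and equals
\begin{equation*}
\int_0^t \Lambda^n(U^n_s,\Theta^n_s)\int_{K_n}\bigl|\langle\phi,z^n_j(\xi)-z^n_j(\Theta^n_s)\rangle_E\bigr|^2\mu^n\bigl((U^n_s,\Theta^n_s),\dif\xi\bigr)\,\dif s.
\end{equation*}
Together with the trivial bound $|\langle\phi,v\rangle_E|^2\leq\|\phi\|_{E^\ast}^2\|v\|_E^2$, hypothesis \eqref{LLN_martingale_bound_first_lemma} forces the expectation of this expression to be finite; this upgrades the local martingale to a genuine square-integrable martingale and yields the scalar It\^o isometry.

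To lift to $E$ I would fix an orthonormal basis $\{e_k\}_{k\geq 1}$ of $E$, apply the scalar isometry with $\phi$ taken to be the Riesz image of $e_k$, and sum over $k$. All summands being nonnegative, Tonelli permits interchanging the sum with the expectation and both integrations, and Parseval collapses the inner sums to $\|z^n_j(\xi)-z^n_j(\Theta^n_s)\|_E^2$ and $\|M^n_j(t)\|_E^2$ respectively, giving \eqref{banach_ito_isometry}. Its right-hand side is finite by assumption, hence $\EX^n\|M^n_j(t)\|_E<\infty$ by Cauchy--Schwarz; combined with the real-valued martingale property of $\langle\phi,M^n_j(t)\rangle_E$ for every $\phi\in E^\ast$, the criterion cited before the theorem from \cite[Sec.~2.3]{PrevotRoeckner} then identifies $M^n_j$ as an $E$-valued martingale. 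The principal technical obstacle I anticipate is the clean derivation of the scalar compensator formula: the domain conditions of Theorem \ref{PDMP_gen_theorem} applied to $f^2$, and in particular the $\mu^n$-integrability of the squared jump increments entering $\mathcal{A}^n f^2$, must be verified directly from \eqref{LLN_martingale_bound_first_lemma} rather than imported from a finite-dimensional reference, and the strong measurability of the Bochner integral in \eqref{local_martingale_2} has to be checked en route so that Parseval legitimately applies to $M^n_j(t)$ as a bona fide $E$-valued random variable.
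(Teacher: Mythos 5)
Your proposal is correct, but it follows a genuinely different route from the paper. The paper's proof (Appendix \ref{section_ito_iso_banach_proof}) works directly with the Hilbert-space norm: it computes $\EX[\|M(\tau_1\wedge t)\|_E^2\,|\,\sF_0]$ explicitly from the inter-jump structure of the PDMP, uses integration by parts to match it with the conditional expectation of the compensator process $\ntilde N_2$, and then extends to all jump times $\tau_k$ by induction (using the optional sampling theorem for Hilbert-valued martingales and a Young-inequality argument to propagate square-integrability), finally passing to the unstopped process via Fatou and monotone convergence. You instead scalarize: apply the known real-valued theory for stochastic integrals against the PDMP martingale measure to each $\langle\phi,M^n_j(t)\rangle_E$, obtain the one-dimensional isometry, and reassemble with Tonelli and Parseval over an orthonormal basis. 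Your route is shorter and is in fact consistent with how the paper itself proceeds elsewhere -- Proposition \ref{prop_char_quad_var} identifies the quadratic variation operator precisely by testing against functionals and citing \cite[Prop.~4.6.2]{Jacobsen}, and part (a) of the paper's appendix proof already establishes the scalar martingale property via \cite[Thm.~4.6.1]{Jacobsen}. What the paper's first-principles computation buys is independence from the summation-interchange issues you correctly flag: one must know that $M^n_j(t)$ is a genuine strongly measurable $E$-valued random variable (so that Parseval applies pathwise) and that the scalar local martingales upgrade to square-integrable martingales before summing; you have identified both points and they are resolvable under \eqref{LLN_martingale_bound_first_lemma} (the Bochner integral in \eqref{local_martingale_2} exists a.s.\ as the paper notes, and a local martingale started at zero whose predictable quadratic variation has finite expectation is square-integrable). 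I see no gap in your argument, only a heavier reliance on the imported one-dimensional results than the paper chose to make.
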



We continue the investigation of the processes $M^n_j$ as Hilbert space valued martingales. From now on we always assume that assumption \eqref{LLN_martingale_bound_first_lemma} holds. Note that the finiteness of the second moments of the jump sizes is a standard condition in related fluid limit theorems \cite{Kurtz1,Wainrib1,Metivier}. We introduce a concept akin to the quadratic covariance operator in Euclidean finite dimensional spaces. This concept is important for the central limit theorems in, on the one hand, establishing weak convergence, and, on the other hand, characterising the limit. For further reference we refer to \cite{MetivierSemi}.

\begin{defn} For the square-integrable, $E$--valued, c\`adl\`ag martingale $M^n_j$ we denote by $(\ll\!\! M^n_j\!\!\gg\!\!_t)_{t\geq 0}$ its \emph{predictable quadratic variation process}, i.e., the unique (up to indistinguishability), predictable $L_1(E^\ast,E)$-valued\footnote{$L_1(E^\ast,E)$ denotes the space of trace class operators from the Hilbert space $E^\ast$ into $E$.}  process which satisfies that for all $\phi,\psi\in E^\ast$ the real-valued process
\begin{equation}\label{def_limit_theorems_cross_caracteristcs}
t\mapsto\langle \phi, M^n_j(t)\rangle_E\,\langle\psi,M^n_j(t) \rangle_E- \langle\phi, \ll\!\! M^n_j\!\!\gg\!\!_t\psi \rangle_E
\end{equation}
is a local martingale. 
\end{defn}

The aim now is to obtain an explicit formula for the quadratic variation process of the individual martingales $M^n_j$ as well as of the vector-valued process $M^n$ of all martingales $M^n_j$, i.e., the $\mathcal{E}$--valued process
\begin{equation*}
t\mapsto M^n(t)=\bigl(M^n_1(t),\ldots,M^n_m(t)\bigr).
\end{equation*}
To this end we define for all $i,j=1,\ldots,m$ operators $G_{ij}^n\in L(E^\ast,E)$ by
\begin{eqnarray}\label{martingale_cov_operator_diagonal}
\lefteqn{\psi\mapsto\ G_{ij}^n(u,\theta^n)\psi\,:=}\\[2ex]
&&\phantom{xxxxxxx}:=\,\Lambda^n(u,\theta^n)\int_{K_n} \langle\psi, z^n_i(\xi)-z^n_i(\theta^n) \rangle_E\,\Bigl(z^n_j(\xi)-z^n_j(\theta^n)\Bigr)\,\mu^n\bigl((u,\theta^n),\dif\xi).\nonumber
\end{eqnarray}
Clearly, these are linear, bounded operators mapping $E^\ast\to E$ and depend measurably on $(u,\theta^n)\in H\times K_n$. For $i=j$ each operator is \emph{non-negative}, i.e., $\langle\phi,G^n_{jj}(u,\theta^n)\phi\rangle_E\geq 0$ for all $\phi\in E^\ast$, and \emph{symmetric}, i.e., $\langle\psi,G^n_{jj}(u,\theta^n)\phi\rangle_E=\langle\phi,G^n_{jj}(u,\theta^n)\psi\rangle_E$ for all $\phi,\psi\in E^\ast$. Let $(\varphi_k)_{k\in\mathbb{N}}$ denote an orthonormal basis in $E^\ast$. We find due to the Riesz Representation Theorem and Parseval's identity that the trace of the operators $G_{jj}$ satisfies
\begin{eqnarray}\label{quad_variation_trace_charac}
\textnormal{Tr}\,G_{jj}^n(u,\theta^n) &=& \Lambda^n(u,\theta^n)\int_{K_n} \sum_{k\in\mathbb{N}}\Bigl(\langle\varphi_k, z^n_j(\xi)-z^n_j(\theta^n) \rangle_E\Bigr)^2\,\mu^n\bigl((u,\theta^n),\dif\xi)\nonumber\\
&=& \Lambda^n(u,\theta^n)\int_{K_n} \big\|z^n_j(\xi)-z^n_j(\theta^n)\big\|_E^2\,\mu^n\bigl((u,\theta^n),\dif\xi).
\end{eqnarray}
For arbitrary $i,j$ the trace is bounded in terms of \eqref{quad_variation_trace_charac} as it follows from Young's inequality that
$\textnormal{Tr}\,G_{ij}^n(u,\theta^n)\leq \tfrac{1}{2}\textnormal{Tr}\,G_{ii}^n(u,\theta^n)+\tfrac{1}{2}\textnormal{Tr}\,G_{jj}^n(u,\theta^n)$. 

Let $\Phi=(\phi_1,\ldots,\phi_m)$ and $\Psi=(\psi_1,\ldots,\psi_m)$ be elements of $\mathcal{E}^\ast$. Summing over all operators \eqref{martingale_cov_operator_diagonal}   
applied to the components of $\Phi,\,\Psi$ as indicated by the indices, i.e., 
\begin{equation}\label{quadratic_rep_of_Gn}
\langle\Phi,G^n(u,\theta^n)\,\Psi\rangle_{\mathcal{E}}:=\sum_{i,j=1}^m \langle\phi_i,G^{n}_{ij}(u,\theta^n)\,\psi_j\rangle_E,
\end{equation}
we obtain a linear, bounded operator $G^n(u,\theta^n)$ mapping $\mathcal{E}^\ast$ to $\mathcal{E}$. This operator is symmetric as the operators $G_{ij}^n$ satisfy $\langle\phi,G^n_{ij}(u,\theta^n)\psi\rangle_E=\langle\psi,G^n_{ji}(u,\theta^n)\phi\rangle_E$ for all $i,j$. Moreover, the operator $G^n(u,\theta^n)$ is non-negative as it holds that
\begin{equation*}
\langle\Psi,G^n(u,\theta^n)\,\Psi \rangle_\mathcal{E} \,=\, \Lambda^n(u,\theta^n)\int_{K_n} \Bigl(\sum_{i=1}^m\bigl\langle\psi_i,z^n_i(\xi)-z^n_i(\theta^n)\bigr\rangle_E\Bigr)^2\,\mu^n\bigl((u,\theta^n\bigr),\dif\xi)\,.
\end{equation*}
Finally, the operator $G^n(u,\theta^n)$ is of trace class if the operators $G_{jj}$, $j=1,\ldots,m$, are of trace class and the trace satisfies
\begin{equation}
\textnormal{Tr}\,G^n(u,\theta^n) = \Lambda^n(u,\theta^n)\int_{k_n}\|z^n(\xi)-z^n(\theta^n))\|_\mathcal{E}^2\,\mu^n\bigl((u,\theta^n),\dif\xi\bigr)\,. 
\end{equation}


We next prove that the operators \eqref{quad_variation_trace_charac} give the quadratic variations of the martingales \eqref{local_martingale_2}.

\begin{prop}\label{prop_char_quad_var} The quadratic variation of the martingale $M^n_j$ satisfies for all $t\geq 0$
\begin{equation}\label{explicit_def_cross_1}
\ll\!\! M^n_j\!\!\gg\!\!_t\,=\,\int_0^t G_{jj}^n(U^n_s,\Theta^n_s)\,\dif s\,. 
\end{equation}
\end{prop}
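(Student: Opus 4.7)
The plan is to establish \eqref{explicit_def_cross_1} via the characterising property \eqref{def_limit_theorems_cross_caracteristcs} of the predictable quadratic variation together with its uniqueness. Fix arbitrary $\phi,\psi\in E^\ast$. By \eqref{local_martingale} and Theorem \ref{banach_martingale_is_square_int}, the real-valued processes $\langle\phi,M^n_j(\cdot)\rangle_E$ and $\langle\psi,M^n_j(\cdot)\rangle_E$ are square-integrable martingales. The integration-by-parts formula for real semi-martingales then yields that
\begin{equation*}
\langle\phi,M^n_j(t)\rangle_E\,\langle\psi,M^n_j(t)\rangle_E-\bigl[\langle\phi,M^n_j\rangle_E,\,\langle\psi,M^n_j\rangle_E\bigr]_t
\end{equation*}
is a local martingale, so the task reduces to identifying the predictable compensator of the optional quadratic covariation with $\int_0^t\langle\phi,G_{jj}^n(U^n_s,\Theta^n_s)\psi\rangle_E\,\dif s$.

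The crucial observation is that the compensator integral appearing in \eqref{local_martingale} has continuous paths, so that the jumps of $\langle\phi,M^n_j(t)\rangle_E$ coincide with those of $t\mapsto\langle\phi,z_j^n(\Theta^n_t)\rangle_E$ and occur precisely at the jump times $(\tau_k)_{k\in\mathbb{N}}$ of the piecewise constant component. Hence
\begin{equation*}
\bigl[\langle\phi,M^n_j\rangle_E,\langle\psi,M^n_j\rangle_E\bigr]_t\,=\,\sum_{\tau_k\leq t}\bigl\langle\phi,z_j^n(\Theta^n_{\tau_k})-z_j^n(\Theta^n_{\tau_k-})\bigr\rangle_E\,\bigl\langle\psi,z_j^n(\Theta^n_{\tau_k})-z_j^n(\Theta^n_{\tau_k-})\bigr\rangle_E.
\end{equation*}
The exponential survivor function \eqref{def_pdmp_survivor_function} combined with the post-jump kernel $\mu^n$ ensures that the jump random measure of the PDMP admits the predictable $(\sF^n_t)$-compensator $\Lambda^n(U^n_{s-},\Theta^n_{s-})\,\mu^n\bigl((U^n_{s-},\Theta^n_{s-}),\dif\xi\bigr)\,\dif s$. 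A standard compensation argument then identifies the dual predictable projection of this pure-jump process with
\begin{equation*}
\int_0^t\Lambda^n(U^n_s,\Theta^n_s)\!\int_{K_n}\!\!\bigl\langle\phi,z_j^n(\xi)-z_j^n(\Theta^n_s)\bigr\rangle_E\,\bigl\langle\psi,z_j^n(\xi)-z_j^n(\Theta^n_s)\bigr\rangle_E\,\mu^n\bigl((U^n_s,\Theta^n_s),\dif\xi\bigr)\,\dif s,
\end{equation*}
which by the bilinear definition \eqref{martingale_cov_operator_diagonal} is precisely $\int_0^t\langle\phi,G_{jj}^n(U^n_s,\Theta^n_s)\psi\rangle_E\,\dif s$.

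Chaining the two local-martingale properties proves that $\langle\phi,M^n_j(t)\rangle_E\langle\psi,M^n_j(t)\rangle_E-\int_0^t\langle\phi,G_{jj}^n(U^n_s,\Theta^n_s)\psi\rangle_E\,\dif s$ is a local martingale for every $\phi,\psi\in E^\ast$, whence the uniqueness of the predictable quadratic variation yields \eqref{explicit_def_cross_1}, provided the candidate is a predictable $L_1(E^\ast,E)$-valued process. The trace-class property holds pointwise by \eqref{quad_variation_trace_charac} combined with \eqref{LLN_martingale_bound_first_lemma}, and predictability is obtained by replacing $(U^n_s,\Theta^n_s)$ with its left-continuous version, which coincides with the right-continuous one $\dif s$-almost everywhere. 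The principal technical obstacle is the local integrability of the jump sum $\sum_{\tau_k\leq t}|\langle\phi,\Delta z_j^n\rangle_E\langle\psi,\Delta z_j^n\rangle_E|$ needed to legitimise the compensation theorem; this is settled via Cauchy-Schwarz and the It{\^o}-isometry \eqref{banach_ito_isometry}, which bounds the expectation by $\|\phi\|_{E^\ast}\|\psi\|_{E^\ast}\,\EX^n\|M^n_j(t)\|_E^2$.
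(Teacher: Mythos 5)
Your proposal is correct and reaches the conclusion by the same overall strategy as the paper: reduce to the real-valued processes $\langle\phi,M^n_j\rangle_E$, identify their predictable covariation, and pass to the operator-valued statement by uniqueness/linearity. The only real difference is in the middle step: the paper simply cites a result of Jacobsen (Prop.~4.6.2) which directly gives the predictable quadratic covariation of real-valued stochastic integrals with respect to the PDMP's associated martingale measure, whereas you prove that fact inline via integration by parts, the observation that the compensator part of \eqref{local_martingale} is continuous so the optional bracket reduces to the sum of jump products, and compensation of the jump measure by $\Lambda^n\mu^n\,\dif s$. Your route is longer but self-contained; the paper's is shorter at the cost of an external reference. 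Your handling of the side issues (local integrability of the jump sum via Cauchy--Schwarz and the It{\^o}-isometry, the $L_1(E^\ast,E)$-valuedness from \eqref{quad_variation_trace_charac} and \eqref{LLN_martingale_bound_first_lemma}) matches what the paper needs; the remark about left-continuous versions is unnecessary, since the time integral $t\mapsto\int_0^t G^n_{jj}(U^n_s,\Theta^n_s)\,\dif s$ is continuous and adapted, hence predictable as it stands.
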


\begin{rem} It is an immediate consequence of Proposition \ref{prop_char_quad_var} that the quadratic variation of the $\mathcal{E}$--valued martingale $M^n$ is given analogously to \eqref{explicit_def_cross_1} by integrating the operator $G^n$.
\end{rem}

\begin{proof} First of all note that due to the characterisation of the trace \eqref{quad_variation_trace_charac} and condition \eqref{LLN_martingale_bound_first_lemma} it holds that the process in the right hand side of \eqref{explicit_def_cross_1} takes values in $L_1(E^\ast,E)$ almost surely. Further, it holds that $\ll\!\! M^n_j\!\!\gg\!\!_t$ satisfies for all $\phi,\psi\in E$ that
\begin{eqnarray*}\label{explicit_def_prop_cross_char}
\lefteqn{\langle\phi, \ll\!\! M^n_j\!\!\gg\!\!_t\psi \rangle_E\ =}\\[2ex]
&&\hspace{-20pt}=\ \int_0^t\Lambda^n(U^n_s,\Theta^n_s)\int_{K_n} \langle\psi, z^n_j(\xi)-z^n_j(\Theta^n_{s}) \rangle_E\,\langle\phi, z^n_j(\xi)-z^n_j(\Theta^n_{s}) \rangle_E\,\mu^n\bigl((U^n_s,\Theta^n_s),\dif\xi)\,\dif s\nonumber 
\end{eqnarray*}
as this right hand side is, due to \cite[Prop.~4.6.2]{Jacobsen}, the unique real-valued process such that $\langle \phi, M^n_j(t)\rangle_E\,\langle\psi,M^n_j(t) \rangle_E-\langle\phi, \ll\!\! M^n_j\!\!\gg\!\!_t\psi \rangle_E$ is a local martingale. Here $\langle \phi, M^n_j(t)\rangle_E$ and $\langle\psi,M^n_j(t) \rangle_E$ are understood as real-valued stochastic integrals with respect to the associated martingale measure of the PDMP. Thus we infer that for all $\phi,\psi\in E$ it holds
\begin{equation*}
\langle\phi,\ll\!\! M^n_j\!\!\gg\!\!_t\psi \rangle_E\, =\, \int_0^t\langle\phi,G^n_{jj}(U_s^n,\Theta^n_s)\psi \rangle_E\,\dif s\,.
\end{equation*}
Finally, the linearity of the Bochner integral (note that $L_1(E^\ast,E)$ is a Banach space) implies \eqref{explicit_def_cross_1}.
\end{proof}

A further second property of the quadratic variation is that the process
\begin{equation*}
t\mapsto\|M^n_j(t)\|^2_E-\textnormal{Tr}\ll\!\! M^n_j\!\!\gg\!\!_t
\end{equation*}
is a local martingale. We note that the \emph{trace process} $t\mapsto \textnormal{Tr}\ll\!\! M^n_j\!\!\gg\!\!_t$ is the unique, predictable increasing process exhibiting this property. Using the characterisation \eqref{explicit_def_cross_1} of the quadratic variation we thus obtain that the process
\begin{equation}\label{trace_process}
t\mapsto\|M^n_j(t)\|_E^2-\textnormal{Tr}\,\Bigl(\int_0^t G_{jj}^n(U^n_s,\Theta^n_s)\,\dif s\Bigr)\, =\, \|M^n_j(t)\|_E^2-\int_0^t \textnormal{Tr}\,G_{jj}^n(U^n_s,\Theta^n_s)\,\dif s 
\end{equation}
is a local martingale vanishing almost surely at $t=0$ and analogously in the case of the $\mathcal{E}$--valued martingale $M^n$.


\medskip

We are now in a position to state a lemma which establishes the convergence in probability \eqref{martingale_lemma_result_2} of the processes $(M^n_j)_{t\geq 0}$ necessary for the law of large numbers, cf.~condition \eqref{uniform_martingale_bound} in Theorem \ref{LLN}.

\begin{lem}\label{martingale_bound_1} Assume that for all $T>0$
\begin{equation}\label{LLN_martingale_bound_first_lemma_2}
\lim_{n\to\infty}\, \EX\,\int_0^T\Bigl[\Lambda^n(U^n_s,\Theta^n_s)\int_{K_n} \|z^n_i(\xi)-z^n_i(\Theta^n_s)\|_E^2\,\mu^n\bigl((U^n_s,\Theta^n_s),\dif\xi\bigr)\Bigr]\,\dif s\,=\,0\,.
\end{equation}
Then the process \eqref{trace_process} is a martingale and for all $T,\,\eps>0$, it holds that
\begin{equation}\label{martingale_lemma_result_2}
\lim_{n\to\infty}\Pr^n\bigl[\sup\nolimits_{\,t\in[0,T]}\|M_j^n(t)\|_{E}>\eps\bigr]=0\,.
\end{equation}
\end{lem}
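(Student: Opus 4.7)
The plan is to combine Theorem \ref{banach_martingale_is_square_int} (the It\^o isometry) with Doob's maximal inequality in its Hilbert-space-valued form, obtained by applying the scalar version to the non-negative real submartingale $\|M^n_j(\cdot)\|_E$. The hypothesis \eqref{LLN_martingale_bound_first_lemma_2} then enters at the very end to send the resulting upper bound to zero.

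First I would invoke the standing assumption \eqref{LLN_martingale_bound_first_lemma}, which permits Theorem \ref{banach_martingale_is_square_int}: $M^n_j$ is a square-integrable $E$-valued c\`adl\`ag martingale and satisfies the It\^o isometry \eqref{banach_ito_isometry}. Since $\|M^n_j(\cdot)\|_E$ is then a non-negative real submartingale, Doob's $L^2$ inequality gives
$$\EX^n\sup\nolimits_{t\in[0,T]}\|M^n_j(t)\|_E^2 \,\leq\, 4\,\EX^n\|M^n_j(T)\|_E^2 \,<\,\infty.$$

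Next I would establish the asserted martingale property of \eqref{trace_process}. According to the paragraph immediately preceding the statement of the lemma, \eqref{trace_process} is a priori only a local martingale vanishing at $t=0$. Its first summand $\|M^n_j(t)\|_E^2$ is $L^1$-dominated on $[0,T]$ by the integrable majorant displayed above, while the compensator $\int_0^T\textnormal{Tr}\,G^n_{jj}(U^n_s,\Theta^n_s)\,\dif s$ has finite expectation by the trace characterisation \eqref{quad_variation_trace_charac} together with the standing assumption \eqref{LLN_martingale_bound_first_lemma}. A local martingale whose running supremum on $[0,T]$ is $L^1$-dominated is a true martingale, which yields the first assertion.

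For the convergence in probability I would apply Doob's maximal inequality once more, now in the Markov-Chebyshev form: for every $T,\eps>0$,
$$\Pr^n\bigl[\sup\nolimits_{t\in[0,T]}\|M^n_j(t)\|_E > \eps\bigr] \,\leq\, \frac{1}{\eps^2}\,\EX^n\|M^n_j(T)\|_E^2.$$
By the It\^o isometry \eqref{banach_ito_isometry}, the right-hand side equals $\eps^{-2}$ times precisely the expectation appearing inside the limit in \eqref{LLN_martingale_bound_first_lemma_2}; hence it tends to zero as $n\to\infty$ by hypothesis, which gives \eqref{martingale_lemma_result_2}.

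I do not anticipate a substantial obstacle: the only step requiring some care is the upgrade from local to true martingale of \eqref{trace_process}, for which the Hilbert-space-valued Doob inequality and the integrability of the compensator are both needed. The remainder of the argument is a routine combination of Chebyshev-Markov with the It\^o isometry already supplied by Theorem \ref{banach_martingale_is_square_int}.
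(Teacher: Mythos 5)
Your proposal is correct and follows essentially the same route as the paper's proof: Doob's maximal inequality combined with the It\^o-isometry \eqref{banach_ito_isometry} to reduce \eqref{martingale_lemma_result_2} to hypothesis \eqref{LLN_martingale_bound_first_lemma_2}, and the criterion that a local martingale with integrable running supremum is a true martingale (applied via the same decomposition into the $\|M^n_j\|_E^2$ part controlled by Doob and the increasing compensator with finite expectation) for \eqref{trace_process}. The only cosmetic difference is that the paper bounds $\Pr^n[\sup_t\|M^n_j(t)\|_E^2>\eps]$ via Markov's inequality with exponent one, whereas you phrase the same estimate as Chebyshev with $\eps^{-2}$; the two are equivalent.
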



\begin{proof} As the process $M^n_j$ is an $E$-valued c\`adl\`ag martingale, it holds that $\|M^n_j\|_E^2$ is a c\`adl\`ag submartingale. Thus an application of Markov's and Doob's inequalities yield the estimates
\begin{equation*}
\Pr^n\Bigl[\sup\nolimits_{\,t\in[0,T]}\,\|M^n_j(t)\|^2_E>\eps\Bigr]\,\leq\, \frac{1}{\eps}\,\EX^n\bigl[\sup\nolimits_{\,t\in[0,T]}\,\|M^n_j(t)\|^2_E\bigr]
\,\leq\,\frac{4}{\eps}\, \EX^n\|M_j^n(T)\|^2_E\,.
\end{equation*}
Now, the It{\^o}-isometry \eqref{banach_ito_isometry} and condition \eqref{LLN_martingale_bound_first_lemma_2} imply the convergence in probability \eqref{martingale_lemma_result_2}. 
It remains to show that the process \eqref{trace_process} is a martingale. A sufficient condition, see, e.g., \cite[Prop.~B.0.13]{Jacobsen}, 
is that for all $T>0$ it holds
\begin{equation}\label{proof_martingale_bound_2_suff_cond}
\EX^n\Bigl[\sup\nolimits_{t\in[0,T]}\Big|\|M^n_j(t)\|_E^2-\int_0^t \textnormal{Tr}\,G_{jj}^n(U^n_s,\Theta^n_s)\,\dif s\Big|\Bigr]<\infty\,. 
\end{equation}
Estimating the term inside the expectation we obtain
\begin{eqnarray*}
\lefteqn{\sup\nolimits_{t\in[0,T]}\Big|\|M^n_j(t)\|_E^2-\int_0^t \textnormal{Tr}\,G_{jj}^n(U^n_s,\Theta^n_s)\,\dif s\Big|}\\
&&\hspace{-20pt}\leq\ \sup\nolimits_{t\in[0,T]}\|M^n_j(t)\|_E^2+\sup\nolimits_{t\in[0,T]}\int_0^t \Lambda^n(u,\Theta^n)\int_{K_n} \big\|z^n_j(\xi)-z^n_j(\Theta^n)\big\|_E^2\,\mu^n\bigl((u,\Theta^n),\dif\xi)\,\dif s.
\end{eqnarray*}
The expectation of the first supremum term in the right hand side is bounded due to Doob's inequality and the square-integrability of the martingale. The term inside the second supremum is increasing, thus its expectation is finite due to condition \eqref{LLN_martingale_bound_first_lemma_2}.

\end{proof}

\section{A weak law of large numbers}\label{Sec_limitThm}

In order to propose a deterministic limit of the sequence of PDMPs we consider functions $F_j:\mathcal{E}\times H\to E$, $j=1,\ldots,m$. In combination with the operators $A,\,B$ these functions are used to define a coupled system of deterministic abstract evolution equations
\begin{equation}\label{ODE_system}
\left.\begin{array}{rcll}
\dot u&=&A(p)\,u+B(p,u), &\\[1ex]
\dot p_j&=& F_j(p,u), & j=1,\ldots,m\,.
\end{array}\right.
\end{equation}
We assume that to suitable initial condition $(u_0,p_0)\in H\times\mathcal{E}$ there exists a unique weak solution $(u(t),p(t))_{t\geq 0}$ in $C(\rr_+,H\times\mathcal{E})$ of \eqref{ODE_system}. Additionally, we assume that for all $i=1,\ldots,m$ the components $p_i$ satisfy
\begin{equation}\label{det_limit_int_equation}
\langle\phi,p_i(t)\rangle_E=\langle\phi,p_i(0)\rangle_E+\int_0^t\langle\phi,F_i(p(s),u(s))\rangle_E\,\dif s\qquad\forall\,t\in[0,T],\,\phi\in E^\ast\,. 
\end{equation}
That is, the components $p_j$ satisfy the equation \eqref{ODE_system} in the sense of an Hilbert space valued integral equation. We note that in application one usually encounters deterministic limit systems that possess strong or classical solutions and hence the current weak framework is satisfied. Finally, we assume that the operators $A$, $B$ and $F_j$, $j=1,\ldots,m$, satisfy Lipschitz-type conditions on $L^2((0,T),\mathcal{E}\times X)$ in the sense that for every $T>0$ there exist constants $L_1$ and $L_2$ such that for all $u,v \in L^2((0,T),X)$ and all $p,q\in L^2((0,T),\mathcal{E})$ it holds that
\begin{eqnarray}
\lefteqn{\int_0^T\langle A(q)\,v-A(p)\,u,v-u\rangle_X
+\langle B(q,v)-B(p,u),v-u\rangle_X\,\dif t}\nonumber\\ &&\phantom{xxxxxxxxxxxxxxxxxxxxxxxx}\leq\ L_1\int_0^T\|v-u\|^2_H+\sum_{i=1}^m\|q_i-p_i\|_{E}^2\,\dif t\,.\phantom{xxxxxx}\label{onesided_lip}
\end{eqnarray}
and
\begin{equation}\label{det_sys_lipschitz}
\Bigl(\int_0^T\|F_j(q,v)-F_j(p,u)\|_{E}\,\dif t\Bigr)^2\leq L_2\int_0^T\|v-u\|_H^2+\sum_{i=1}^m\| q_i-p_i\|_{E}^2\,\dif t,
\end{equation}
where we have omitted the arguments $t$ of the functions $u,v,p$ and $q$.



\begin{rem}\label{weaker_conditions} In the proof of the law of large numbers, see Section \ref{section_proof_LLN}, these Lipschitz conditions are applied such that one pairing $(v,q)$ refers to a path segment of the continuous component of a PDMP and the coordinate process and the second $(u,p)$ to the deterministic limit functions. 
%
Thus for the applications of \eqref{onesided_lip} and \eqref{det_sys_lipschitz} in the proof it is sufficient that these hold only for pairings $(v,q)$ out of a set containing almost all paths of the sequence of PDMPs and $(u,p)$ being the deterministic limit, i.e., \mbox{one (!)} distinguished pairing. This restriction of \eqref{onesided_lip} and \eqref{det_sys_lipschitz} to be satisfied only for particular pairings $(v,q)$ and $(u,p)$ out of the whole path space has a decisive advantage: In order to establish these conditions we are able to incorporate additional qualitative results on the trajectories of the PDMPs and the deterministic limit and the constants $L_1,\,L_2$ may depend on $(u,p)$. For example, in the application to excitable membrane models such an additional qualitative 
%
is that the components corresponding to $u,v,p,q$ are pointwise bounded.
\end{rem}

We now present a weak law of large numbers in Theorem \ref{LLN} below. The proof of the theorem follows the lines of previously published limit theorems considering processes in finite dimensions \cite{Kurtz1,Wainrib1}. The main difficulties arising in infinite-dimensional phase space
concerns the bounds on the martingale part, cf.~condition (C1), which is rarely a problem in finite dimensions. 
However, using the appropriate martingale theory in Hilbert spaces 
these can be kept to a minimum. Then the difficulties are mainly of a technical nature as martingale theory in connection with PDMPs in infinite-dimensional spaces gets more involved and is not covered by previous results in \cite{Jacobsen}.
%
%
We have established the necessary theory in the preceding Section \ref{section_associated_martingale} and addressed the question of the convergence of the martingale part (C1) within this framework. Most importantly, in Lemma \ref{martingale_bound_1} we have proven a sufficient condition for (C1) to be satisfied. 
In particular, this sufficient condition \eqref{LLN_martingale_bound_first_lemma_2} is a natural extension of the condition employed in finite dimensions, cf.~\cite{Kurtz1,Wainrib1}. 

A different approach to establishing condition (C1) which avoids using martingale theory in Hilbert spaces is exemplified in the law of large numbers proved in \cite{Austin}. In infinite-dimensional space this approach encounters the problem of simultaneously controlling countably many real martingales compared to only finitely many in the case of its finite-dimensional counterpart. This problem can be overcome with an intricate compactness argument which relies on the assumption that the dual space $E^\ast$ is compactly embedded in some additional normed space and all estimates -- \mbox{especially} an estimate which also implies condition \eqref{LLN_martingale_bound_first_lemma_2} -- have to be derived in the norm of this additional space. Furthermore, the condition, that all martingales $(\langle\phi,M^n_j(t)\rangle_E)_{t\geq 0}$, $j=1,\ldots,m$ and $\phi\in E^\ast$, possess almost surely uniformly bounded paths, has to be introduced. We are of the opinion that our approach is more elegant, but, more importantly, it avoids the introduction of additional conditions.
\medskip

Finally, consistently with the notation in Section \ref{section_associated_martingale} we use   in the subsequent theorem and its proof the notation $\bigl[\mathcal{A}^n\langle\cdot, z_j^n(\cdot)\rangle_E\bigr]$ as defined in \eqref{new_equation_MT_sugg}. Then, for given $(u,\theta^n)\in H\times K_n$ functionals $\bigl[\mathcal{A}^n\langle\,\cdot\,, z_j^n(\cdot)\rangle_E\bigr](u,\theta^n)$ on $E^\ast$ are defined by the mappings $\phi\mapsto\bigl[\mathcal{A}^n\langle\phi, z_j^n(\cdot)\rangle_E\bigr](u,\theta^n)$. As usual we identify the bidual $E^{\ast\ast}$ with $E$ and thus $\bigl[\mathcal{A}^n\langle\,\cdot\,, z_j^n(\cdot)\rangle_E\bigr](u,\theta^n)\in E$.

\begin{thm}\label{LLN}
We assume that the following conditions hold:
\begin{enumerateI}
\item[\textnormal{(C1)}]\ For all $j=1,\ldots,m$ it holds that for fixed $T,\eps>0$ 
\begin{equation}\label{uniform_martingale_bound}
\lim_{n\to\infty}\Pr^n\bigl[\sup\nolimits_{\,t\in[0,T]}\|M^n_j(t)\|_{E}> \eps\bigr]=0\,.
\end{equation}
\item[\textnormal{(C2)}]\ The functions $F_j$, $j=1,\ldots,m$, satisfy for all $\eps>0$ that
\begin{equation}\label{finite_var_bound}
\lim_{n\to\infty}\Pr^n\Bigl[\int_0^T\big\|\bigl[\mathcal{A}^n\langle\,\cdot\,,z^n_j(\cdot)\rangle_E\bigr](U^n_t,\Theta^n_t) -  F_j(z^n(\Theta^n_t),U^n_t)\big\|_{E}\,\dif t >\eps\Bigr]=0\,,
\end{equation}
where we have omitted the argument $t$ of the functions $u$ and $\theta$.
\item[\textnormal{(C3)}]\ The initial conditions $(U_0^n,\Theta_0^n)$ of the sequence of PDMPs converge in probability to the initial conditions of the deterministic limit in the sense that for all $\eps>0$
\begin{equation*}
\lim_{n\to\infty}\Pr^n\Bigl[\|U^n_0-u_0\|_H+\sum_{i=1}^m\|z_i^n(\Theta_0^n)-p_i(0)\|_{E}>\eps\,\Bigr]=0\,.
\end{equation*} 
\end{enumerateI}
Then, for every $\eps>0$ and every fixed $T>0$ it holds that
\begin{equation}\label{conv_estimate}
\lim_{n\to\infty}\Pr^n\Bigl[\sup\nolimits_{\,t\in[0,T]}\Bigl( \|U_t^n-u(t)\|^2_H+\sum_{j=1}^m\| z_j^n(\Theta_t^n)-p_j(t) \|_{E}^2\Bigr) \ > \ \eps\Bigr]=0\,.
\end{equation}
\end{thm}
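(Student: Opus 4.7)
The plan is a standard fluid-limit Gronwall argument adapted to the Hilbert-space setting: write integral identities for the stochastic trajectory and the deterministic limit, subtract, and estimate the difference using the monotonicity condition \eqref{onesided_lip} and the integral Lipschitz condition \eqref{det_sys_lipschitz}. The forcing term of the Gronwall inequality then collects precisely the three small quantities controlled by (C1)--(C3).

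\textbf{Step 1 (integral representations).} Since $\Theta^n$ has only finitely many jumps on $[0,T]$ a.s.~and $U^n$ is continuous across these jumps and solves \eqref{abstract_ODE} in between, one has $U^n\in L^2(0,T;X)\cap H^1(0,T;X^\ast)$ a.s. The evolution-triple chain rule for $\|\cdot\|_H^2$, applied on each inter-jump sub-interval and then summed, yields
\[
\|U^n_t-u(t)\|_H^2 \,=\, \|U^n_0-u_0\|_H^2 \,+\, 2\int_0^t \bigl\langle \dot{U}^n_s-\dot{u}(s),\, U^n_s-u(s)\bigr\rangle_X\,\dif s,
\]
with $\dot{U}^n_s=A(z^n(\Theta^n_s))\,U^n_s+B(z^n(\Theta^n_s),U^n_s)$ a.e.~in $s$ and analogously for $\dot{u}$ driven by $p$. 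For the coordinates, the martingale decomposition \eqref{local_martingale_2} combined with \eqref{det_limit_int_equation} gives
\[
z^n_j(\Theta^n_t)-p_j(t) \,=\, \bigl(z^n_j(\Theta^n_0)-p_j(0)\bigr) \,+\, \int_0^t \bigl(F_j(z^n(\Theta^n_s),U^n_s)-F_j(p(s),u(s))\bigr)\,\dif s \,+\, R^n_j(t) \,+\, M^n_j(t),
\]
where $R^n_j(t):=\int_0^t\bigl([\mathcal{A}^n\langle\,\cdot\,,z^n_j(\cdot)\rangle_E](U^n_s,\Theta^n_s)-F_j(z^n(\Theta^n_s),U^n_s)\bigr)\,\dif s$ is the generator-residual.

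\textbf{Step 2 (Gronwall inequality).} Set
\[
\Delta^n(t) \,:=\, \|U^n_t-u(t)\|_H^2 \,+\, \sum_{j=1}^m \|z^n_j(\Theta^n_t)-p_j(t)\|_E^2.
\]
Applying \eqref{onesided_lip} on $[0,t]$ to the continuous identity bounds it by $\|U^n_0-u_0\|_H^2+2L_1\int_0^t\Delta^n(s)\,\dif s$. Squaring the $j$-th coordinate identity via $(a+b+c+d)^2\le 4(a^2+b^2+c^2+d^2)$, bounding its middle integral through \eqref{det_sys_lipschitz}, and summing over $j$ produce
\[
\Delta^n(t) \,\le\, C_0\,\Delta^n(0) \,+\, C_1\!\int_0^t \Delta^n(s)\,\dif s \,+\, \mathcal{R}^n(T), \qquad t\in[0,T],
\]
where
\[
\mathcal{R}^n(T) \,:=\, 4\sum_{j=1}^m \|R^n_j(T)\|_E^2 \,+\, 4\sum_{j=1}^m \sup\nolimits_{\,s\le T}\|M^n_j(s)\|_E^2
\]
is independent of the running time $t$.

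\textbf{Step 3 (conclusion).} Gronwall's lemma then delivers
\[
\sup\nolimits_{\,t\in[0,T]}\Delta^n(t) \,\le\, \bigl(C_0\,\Delta^n(0)+\mathcal{R}^n(T)\bigr)\,e^{C_1 T}.
\]
Each term on the right is small in probability as $n\to\infty$: (C3) yields $\Delta^n(0)\to 0$ in probability; the triangle inequality for Bochner integrals bounds $\|R^n_j(T)\|_E$ by the integral appearing in \eqref{finite_var_bound}, so (C2) implies $\|R^n_j(T)\|_E\to 0$ in probability for each $j$; and (C1) provides $\sup_{s\le T}\|M^n_j(s)\|_E\to 0$ in probability. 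Combining these three inputs yields \eqref{conv_estimate}.

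\textbf{Main obstacle.} The most delicate ingredient is the chain-rule identity for $\|U^n_t-u(t)\|_H^2$ in the presence of the piecewise driver $\Theta^n$: one must establish $U^n\in L^2(0,T;X)\cap H^1(0,T;X^\ast)$ globally on $[0,T]$, not merely on each inter-jump interval. Continuity of $U^n$ at the jump instants combined with the regularity of the PDMP lets one patch the segment-wise identities so that the boundary terms telescope. A secondary subtlety, highlighted in Remark \ref{weaker_conditions}, is that the constants $L_1,L_2$ in \eqref{onesided_lip} and \eqref{det_sys_lipschitz} need only be valid with $(v,q)=(U^n,z^n(\Theta^n))$ and the fixed pair $(u,p)$; in applications this typically requires an a priori uniform bound on the PDMP trajectories, without which a stopping-time localisation would have to be inserted before invoking Gronwall.
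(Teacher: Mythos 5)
Your proposal is correct and follows essentially the same route as the paper's proof: the evolution-triple chain rule patched across jump times, the martingale/generator-residual decomposition of the coordinates, the two Lipschitz-type conditions \eqref{onesided_lip} and \eqref{det_sys_lipschitz} feeding a pathwise Gronwall bound whose forcing term is controlled in probability by (C1)--(C3). The only cosmetic difference is that you keep the forcing term $\mathcal{R}^n(T)$ random and pass to probability at the end, whereas the paper first restricts to high-probability events $\Omega_1\cap\Omega_2$ on which the Gronwall constant is deterministic; note also that the quantity entering Gronwall should be $\bigl(\int_0^T\|\cdot\|_E\,\dif s\bigr)^2$ rather than $\|R^n_j(T)\|_E^2$, since the former dominates $\|R^n_j(t)\|_E^2$ uniformly in $t\in[0,T]$, exactly as your Step 3 already uses.
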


\begin{rem} The result \eqref{conv_estimate} implies convergence in probability of the processes $(U^n_t,z^n(\Theta^n_t))_{t\geq 0}$ to the deterministic function $(u(t),p(t))_{t\in[0,T]}$ in the Hilbert space $L^2((0,T),H\times\mathcal{E})$. If the differences of the components are almost surely bounded independent of $n$ the convergence even holds in the mean, cf.~the application of the law of large numbers in Theorem \ref{LLN_compartmental_models}. 
Further, the conditions (C1)--(C3) are generalisations from Euclidean space to infinite-dimensional Hilbert spaces of those employed in the corresponding theorems for PDMPs in Euclidean space \cite{Wainrib1} and, in particular, of the original formulation in case of pure jump processes in Euclidean space \cite{Kurtz1}. In these cases the conditions above reduce to the corresponding assumptions.
\end{rem}

\section{The central limit theorem and the Langevin approximation}\label{section_the_CLTs}

We proceed to the presentation of the central limit theorem for associated martingales $(M_t^n)_{t\geq 0}$ defined in \eqref{local_martingale_2}. The central limit theorem provides the theoretical basis for an approximation of spatio-temporal PDMPs by Hilbert-space valued diffusion processes where the latter can be represented by solutions of stochastic partial differential equations, see Section \ref{section_langevin_approximation}.

Proving central limit theorems usually involves two tasks: On the one hand, one has to show the existence of a limit and, on the other hand, one has to provide a characterisation of the limit as a certain stochastic process. The former is equivalent to the problem of tightness of the stochastic processes. In the case of martingales sufficient conditions for tightness depending on the quadratic variation process are stated in \cite{Metivier}. In order to characterise the limit there exist different approaches,  
showing either that the limit solves a given (local) martingale problem which is known to have a unique solution (cf.~\cite{Kallianpur,Metivier}) or proving weak convergence of the finite dimensional distributions (cf.~\cite{Kotelenez1,Wainrib1}). We present two central limit theorems, Theorems \ref{clt_by_local_martingale} and \ref{clt_via_char_functions}, employing the two methods, respectively, however, to avoid repetition we state only the proof of the first in the present study and refer to the PhD thesis of one of the authors \cite{RiedlerPhD} for the proof of the second. The two theorems differ in a technical assumption which in each case arises in addition to the central condition of the convergence of the quadratic variations. We believe that for applications of the limit theorems it is advantageous to know both versions of the martingale central limit theorem, as it is easily conceivable that only one of these technical assumptions is satisfied. Hence the theorems are applicable in different situations.

Finally, we emphasise that in the following the space $\mathcal{E}$ need not necessarily be the same space for which the law of large numbers is satisfied. However, clearly, the space $\mathcal{E}$ in the present section contains the space in the law of large numbers as subspace. In applications, usually, the law of large numbers holds in a space with a stronger norm, for example, for the excitable membrane model considered in Section \ref{section_application} the law of large numbers holds in $L^2(D)$ whereas the central limit theorem holds in the space $H^{-2s}(D)$.\footnote{Here and everywhere else $H^{-2s}(D)$ is the dual space to the Sobolev space $H^{2s}(D)$ where $D\subset\rr^d$ and $s>d/2$.} This is a major difference to the corresponding results in finite-dimensional space where both limit theorems hold in the same space.\footnote{Note also that in finite-dimensional spaces all norms, and hence also all norms on subspaces, are equivalent which does not hold in the case of an infinite-dimensional Hilbert space.}

\subsection{A martingale central limit theorem}\label{section_martinagle_clt}

In this section we present central limit theorems for the scaled $\mathcal{E}$--valued martingales $(\sqrt{\alpha_n}\,M^n_t)_{t\geq 0}$ associated with a sequence of PDMPs where $\alpha_n\in\rr_+,\,n\in\mathbb{N}$, is a suitable rescaling sequence such that $\lim_{n\to\infty}\alpha_n=\infty$.
Clearly, the rescaling is necessary in order to be able to obtain a limit different from the trivial limit as \eqref{uniform_martingale_bound} implies that $(M^n_t)_{t\geq 0}$ converges to zero in distribution. We note that the sequence $\alpha_n$ can also be interpreted as characterising the speed of convergence of the martingales $(M^n_t)_{t\geq 0}$.

In the following let $t\mapsto G(u(t),p(t))\in L\bigl(\mathcal{E}^\ast,\mathcal{E}\bigr)$ be a Bochner-integrable operator-valued map such that each $G(u(t),p(t))$ is a symmetric, positive trace class operator. Particularly this implies for all $\Phi\in\mathcal{E}^\ast$ and all $t>0$, that it holds that
\begin{equation}\label{Kurtz_uniform_boundedness_of_operator}
\int_0^t\bigl\langle\Phi,G(u(s),p(s))\,\Phi\bigr\rangle_\mathcal{E}\,\dif s<\infty\,.
\end{equation}
Here $(u(t),p(t))_{t\geq 0}$ is the deterministic limit obtained in Theorem \ref{LLN} and the use of this notation for the -- at this point -- arbitrary time-dependent operator $G$ only illustrates that in applications the time-dependence is due to a dependence on the deterministic limit system. These operator-valued functions are used to define a unique centred diffusion process on $\mathcal{E}$, i.e., an $\mathcal{E}$--valued centred Gaussian process with independent increments, continuous sample paths. Such a process is uniquely defined by its covariance operator and due to a theorem of It{\^o} stated in \cite{Kotelenez1} every family of trace class operators $C^\ast(t)\in L_1(\mathcal{E},\mathcal{E})$ which are increasing and continuous in $t$ define a centred diffusion process. In the present situation we define $C^\ast$ in the following way. We denote by $\iota: \mathcal{E}\to \mathcal{E}^\ast$ the canonical identification of a Hilbert space with its dual, hence we can define for $x,y\in \mathcal{E}$,
\begin{equation*}
\bigl(x,C^\ast(t)\,y\bigr)_{\mathcal{E}}= \int_0^t \bigl\langle\iota(x),G(u(s),p(s))\,\iota(y)\bigr\rangle_{\mathcal{E}}\,\dif s
\end{equation*}
which is continuous and increasing for all $x\in\mathcal{E}$ and $C^\ast(t)$ is a trace class operator on $\mathcal{E}$. Moreover, for operators $C(t)\in L_1(\mathcal{E}^\ast,\mathcal{E})$, defined by
\begin{equation}\label{diffusion_limit_covariance}
\bigl\langle\Phi,C(t)\,\Psi\bigr\rangle_\mathcal{E}= \int_0^t \bigl\langle\Phi,(G(u(s),p(s))\,\Psi\bigr\rangle_{\mathcal{E}}\,\dif s ,
\end{equation}
there is obviously a one-to-one relationship between $C^\ast$ and $C$. Hence, we may say that also the latter defines a diffusion process on the space $\mathcal{E}$.

We proceed to the statement of the central limit theorem. The proof of the theorem employs a characterisation of the limit via the local martingale problem. The essential condition characterising the limit is the convergence of the quadratic variation processes \eqref{clt_martingale_cond_2}. The second condition \eqref{clt_martingale_cond_1} is a technical condition on the jump heights which arises due to the method of proof and is usually satisfied in applications. The remaining conditions are such that (D1) guarantees tightness of the sequence of processes and in combination with (D2) that any limit is a continuous stochastic process. The proof of the following theorem is deferred to Section \ref{sec_clt_proofs}. 

%
%

\begin{thm}\label{clt_by_local_martingale} We assume that the following conditions hold:
\begin{enumerateI}
\item[\textnormal{(D1)}]\ For all $t>0$ it holds that
\begin{equation}\label{first_clt_section_theorem_2nd_mom_cond}
\sup_{n\in\mathbb{N}}\alpha_n\,\EX^n\int_0^t\Bigl[\Lambda^n(U^n_s,\Theta^n_s)\int_{K_n}\|z^n(\xi)-z^n(\Theta^n_s)\|_\mathcal{E}^2\,\mu^n\bigl((U^n_s,\Theta^n_s),\dif\xi\bigr)\,\dif s\Bigr]\,<\infty,
\end{equation}
and there exists an orthonormal basis $(\varphi_k)_{k\in\mathbb{N}}$ of $\mathcal{E}^\ast$ such that for all $k\in\mathbb{N}$ and all $(u,\theta^n)\in H\times K_n$ except on a set of potential zero\footnote{A set of potential zero is a subset of the state space of the process which the process almost surely never reaches.} 
\begin{equation}\label{alternative_tightness_cond}
\alpha_n\,\EX^n\Bigl[\int_0^t\langle \varphi_k,G^n(U^n_s,\Theta^n_s)\varphi_k\rangle_\mathcal{E}\,\dif s\,\Big|\,(U^n_0,\Theta^n_0)=(u,\theta^n)\Bigr]\ \leq\  \gamma_k\,C(t),
\end{equation}
where the constants $\gamma_k>0$, independent of $n,\,t$ and $(u,\theta^n)$, satisfy $\sum_{k\in\mathbb{N}}\gamma_k<\infty$, and the constant $C(t)>0$, independent of $n,\,k$ and $(u,\theta^n)$, satisfies $\lim_{t\to 0}C(t)=0$.

\item[\textnormal{(D2)}]\ For all $\beta>0$ and every $\Phi\in\mathcal{E}^\ast$ it holds that
\begin{equation}\label{conv_gen_support_thm}
\lim_{n\to\infty}\,\EX^n\Bigl[\int_0^t\Lambda^n(U^n_s,\Theta^n_s)\int_{\sqrt{\alpha_n}\,|\langle\Phi, z^n(\xi)-z^n(\Theta_s^n)\rangle_\mathcal{E}|>\beta}\,\mu^n\bigl((U^n_s,\Theta^n_s),\dif\xi\bigr)\,\dif s\Bigr]\,=\, 0\,.
\end{equation}

\item[\textnormal{(D3)}]\ Further, for all $\Phi\in\mathcal{E}^\ast$ and all $t>0$ it holds that
\begin{equation}\label{clt_martingale_cond_2}
\lim_{n\to\infty} \int_0^t\EX^n\big|\bigl\langle\Phi,G(u(s),p(s))\,\Phi\bigr\rangle_\mathcal{E}-\alpha_n\bigl\langle\Phi,G^n(U^n_s,\Theta^n_s)\,\Phi\bigr\rangle_\mathcal{E}\big|\,\dif s\, =\, 0\,.
\end{equation}
Finally, we assume that the jump heights of the rescaled martingales are almost surely uniformly bounded, i.e., there exists a constant $C<\infty$ such that it holds almost surely for all $n\in\mathbb{N}$ that
\begin{equation}\label{clt_martingale_cond_1}
\sup_{t\geq 0}\,\sqrt{\alpha_n}\,\|z^n(\Theta^n_t)-z^n(\Theta^n_{t-})\|_\mathcal{E}<C\,.
\end{equation}
\end{enumerateI}
Then it follows that the process $(\sqrt{\alpha_n}\,M_t^n)_{t\geq 0}$
converges weakly to an $\mathcal{E}$--valued centred diffusion process characterised by the covariance operator \eqref{diffusion_limit_covariance}.
\end{thm}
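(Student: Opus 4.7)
The plan is to follow the local martingale problem approach: first establish tightness of the laws of $(\sqrt{\alpha_n}\,M^n)_{n\in\mathbb{N}}$ in the Skorokhod space $D([0,T],\mathcal{E})$, then show that every weak limit is continuous, and finally identify the limit through its quadratic variation.

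\emph{Step 1 (Tightness).} I will use M\'etivier's criterion for tightness of Hilbert space valued c\`adl\`ag semimartingales, which is based on their predictable quadratic variations. Condition (D1) is tailored to this purpose: the moment bound \eqref{first_clt_section_theorem_2nd_mom_cond} guarantees via Theorem \ref{banach_martingale_is_square_int} that each $\sqrt{\alpha_n}M^n$ is a square-integrable $\mathcal{E}$--valued martingale, while the basis-wise bound \eqref{alternative_tightness_cond}, together with the summability $\sum_k\gamma_k<\infty$ and $\lim_{t\to 0}C(t)=0$, provides both the compact containment of $\sqrt{\alpha_n}M^n(t)$ in $\mathcal{E}$ (the tails $\sum_{k>N}\gamma_k$ of the orthonormal expansion are uniformly small) and the Aldous-type modulus-of-continuity estimate required for stochastic equicontinuity, uniformly in $n$.

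\emph{Step 2 (Continuity of limit points).} The real-valued projections $\langle\Phi,\sqrt{\alpha_n}M^n\rangle_\mathcal{E}$ have jumps of size $\sqrt{\alpha_n}|\langle\Phi,z^n(\xi)-z^n(\Theta^n_{s-})\rangle_\mathcal{E}|$. The uniform bound \eqref{clt_martingale_cond_1} controls individual jumps, while the Lindeberg-type condition \eqref{conv_gen_support_thm} in (D2) forces the expected number of jumps exceeding any $\beta>0$ in every fixed direction $\Phi$ to vanish. Together with the tightness-driven reduction to compact subsets of $\mathcal{E}$, these imply $\sup_{t\in[0,T]}\|\sqrt{\alpha_n}(M^n_t-M^n_{t-})\|_\mathcal{E}\to 0$ in probability, so that every weak limit concentrates on $C([0,T],\mathcal{E})$.

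\emph{Step 3 (Identification of the limit).} The central convergence \eqref{clt_martingale_cond_2} in (D3), combined with Proposition \ref{prop_char_quad_var} and the It\^o isometry \eqref{banach_ito_isometry}, yields that for every $\Phi\in\mathcal{E}^\ast$ the scaled quadratic variation $\alpha_n\int_0^t\langle\Phi,G^n(U^n_s,\Theta^n_s)\Phi\rangle_\mathcal{E}\,\dif s$ converges in $L^1$ to the deterministic limit $\int_0^t\langle\Phi,G(u(s),p(s))\Phi\rangle_\mathcal{E}\,\dif s=\langle\Phi,C(t)\Phi\rangle_\mathcal{E}$. Polarising and passing to the limit in the local martingale problem \eqref{def_limit_theorems_cross_caracteristcs}, I obtain that for any weak limit $M$ and every $\Phi,\Psi\in\mathcal{E}^\ast$ the process $\langle\Phi,M_t\rangle_\mathcal{E}\langle\Psi,M_t\rangle_\mathcal{E}-\langle\Phi,C(t)\Psi\rangle_\mathcal{E}$ is a local martingale. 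Since $M$ is a continuous square-integrable $\mathcal{E}$--valued martingale whose quadratic variation in every direction is \emph{deterministic}, an infinite-dimensional analogue of the L\'evy characterisation identifies $M$ as the centred Gaussian diffusion with covariance operator $C$. Uniqueness of this limit then upgrades subsequential weak convergence to convergence of the full sequence.

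The main obstacle I expect is Step 1: in infinite dimensions, convergence of finite-dimensional projections does not imply tightness, so one genuinely needs to exploit the trace-class structure of the rescaled quadratic variation together with the basis bound \eqref{alternative_tightness_cond} to secure compact containment in $\mathcal{E}$. The balance of the summability of $(\gamma_k)$ (smallness of orthogonal tails) against $C(t)\to 0$ (stochastic equicontinuity) is precisely engineered for this, but the technical bookkeeping that feeds these ingredients into the M\'etivier criterion is the nontrivial part of the argument.
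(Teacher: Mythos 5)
Your proposal follows essentially the same route as the paper's proof: tightness via M\'etivier's quadratic-variation criterion using (D1), continuity of all weak limit points using (D2), and identification of the limit through the local martingale problem for $\langle\Phi,\cdot\rangle_\mathcal{E}$ and $\langle\Phi,\cdot\rangle_\mathcal{E}^2$ using (D3). The only notable difference in emphasis is that the paper uses the uniform jump bound \eqref{clt_martingale_cond_1} not in the continuity step but in the identification step, to localise with stopping times $\tau_\rho$ so that $\|\zeta_{\tau_\rho}\|_\mathcal{E}\leq\rho+C$ and It\^o's formula with compactly supported test functions applies.
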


%
%

We now state a second version of the martingale central limit theorem wherein the limiting process is characterised by the convergence of the characteristic functions.

\begin{thm}\label{clt_via_char_functions} Assume that the laws of the martingales $(\sqrt{\alpha_n}\,M^n_t)_{t\geq 0}$ form a tight sequence, e.g., condition \textnormal{(D1)} is satisfied.
\begin{enumerateI}
\item[\textnormal{(D3')}]\ \, The convergence \eqref{clt_martingale_cond_2} holds and there exists a sequence $\beta_n>0$ decreasing to zero such that for all $\Phi\in\mathcal{E}^\ast$
\begin{eqnarray}\label{conv_2ndmoments}
\lefteqn{\lim_{n\to\infty}\,\alpha_n\,\EX^n\Bigl[\int_0^t\Lambda^n(U^n_s,\Theta^n_s)\int_{\sqrt{\alpha_n}\,|\langle\Phi, z^n(\xi)-z^n(\Theta^n_s)\rangle_\mathcal{E}|>\beta_n}}\\[1ex] &&\phantom{xxxxxxxxxxxxxxxxxxxx}\big|\langle\Phi, z^n(\xi)-z^n(\Theta^n_s)\rangle_{\mathcal{E}}\big|^2\,\mu^n\bigl((U^n_s,\Theta^n_s),\dif\xi\bigr)\,\dif s\Bigr]\,=\, 0.\nonumber
\end{eqnarray}
\end{enumerateI}
Then it follows that the process $(\sqrt{\alpha_n}\,M_t^n)_{t\geq 0}$
converges weakly to an $\mathcal{E}$--valued centred diffusion process characterised by the covariance operator \eqref{diffusion_limit_covariance}.
\end{thm}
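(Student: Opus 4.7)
The plan is to combine the assumed tightness of $(\sqrt{\alpha_n}\,M^n)_n$ in $D([0,T];\mathcal{E})$ with a direct characteristic-function calculation to identify every subsequential weak limit as the centred Gaussian process determined by \eqref{diffusion_limit_covariance}. Since the candidate limit has continuous sample paths (being a centred Gaussian process whose covariance $C$ is continuous in $t$), convergence of finite-dimensional distributions combined with tightness yields weak convergence in $D([0,T];\mathcal{E})$. By the Cram\'er-Wold principle it suffices to show that for every $\Phi\in\mathcal{E}^\ast$, every $\lambda\in\mathbb{R}$ and every $t\in[0,T]$ the scalar characteristic function $\phi^n(t):=\EX^n[\exp(i\lambda\langle\Phi,\sqrt{\alpha_n}M^n_t\rangle_\mathcal{E})]$ converges to $\exp(-\tfrac12\lambda^2\langle\Phi,C(t)\Phi\rangle_\mathcal{E})$; the multivariate extension follows inductively by conditioning on $\sF^n_{t_{k-1}}$ and re-applying the one-dimensional step to each increment $\sqrt{\alpha_n}(M^n_{t_k}-M^n_{t_{k-1}})$, the independent-increment structure of the limit producing the required product form.

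For the one-dimensional step, set $N^n_t:=\lambda\sqrt{\alpha_n}\langle\Phi,M^n_t\rangle_\mathcal{E}$ and $\Delta^n_s(\xi):=\lambda\sqrt{\alpha_n}\langle\Phi,z^n(\xi)-z^n(\Theta^n_s)\rangle_\mathcal{E}$. Applying It\^o's formula to $e^{iN^n_t}$ (equivalently, applying the extended generator of Theorem \ref{PDMP_gen_theorem} to the bounded function $(u,\theta)\mapsto\exp(i\lambda\sqrt{\alpha_n}\langle\Phi,z^n(\theta)\rangle_\mathcal{E})$, then combining with the absolutely continuous drift of $\langle\Phi,M^n\rangle_\mathcal{E}$), the $+i\Delta^n_s(\xi)$ term in the jump compensator cancels the drift of $N^n$ and one obtains the Volterra equation
\begin{equation*}
\phi^n(t)\,=\,1+\int_0^t\EX^n\Bigl[e^{iN^n_{s-}}\Lambda^n(U^n_s,\Theta^n_s)\!\!\int_{K_n}\!\!\bigl(e^{i\Delta^n_s(\xi)}-1-i\Delta^n_s(\xi)\bigr)\,\mu^n\bigl((U^n_s,\Theta^n_s),\dif\xi\bigr)\Bigr]\,\dif s.
\end{equation*}
The compensated-exponential remainder is a true martingale of zero expectation because its predictable quadratic variation is dominated by $\lambda^2\alpha_n\int_0^t\langle\Phi,G^n(U^n_s,\Theta^n_s)\Phi\rangle_\mathcal{E}\dif s$, which is $L^1$-bounded thanks to \eqref{clt_martingale_cond_2}.

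I then use the Taylor identity $e^{i\Delta}-1-i\Delta=-\tfrac12\Delta^2+R(\Delta)$ with $|R(\Delta)|\leq\min(\Delta^2,|\Delta|^3/6)$ and split the $K_n$-integration at the threshold $\beta_n$ from (D3'). On $\{|\Delta^n_s(\xi)|>\beta_n\}$ the integrand is bounded by $\tfrac12\Delta^2$, so its contribution is dominated by $\tfrac12\lambda^2$ times the left-hand side of \eqref{conv_2ndmoments}, which vanishes by assumption. On $\{|\Delta^n_s(\xi)|\leq\beta_n\}$ the remainder $R$ is bounded by $\tfrac16\beta_n\Delta^2$, producing an $O(\beta_n)$ total contribution because $\EX^n\int_0^t\alpha_n\langle\Phi,G^n\Phi\rangle_\mathcal{E}\dif s$ is bounded by \eqref{clt_martingale_cond_2}. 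The principal $-\tfrac12\Delta^2$ part combined across both regions, together with the trace identity \eqref{quad_variation_trace_charac}, yields $-\tfrac{\lambda^2}{2}\EX^n\int_0^te^{iN^n_{s-}}\alpha_n\langle\Phi,G^n(U^n_s,\Theta^n_s)\Phi\rangle_\mathcal{E}\dif s$. Replacing $\alpha_n\langle\Phi,G^n\Phi\rangle_\mathcal{E}$ by the deterministic $L^1$-limit $\langle\Phi,G(u(s),p(s))\Phi\rangle_\mathcal{E}$ from \eqref{clt_martingale_cond_2} (using $|e^{iN^n_{s-}}|=1$) reduces $\phi^n$ to the perturbed linear equation
\begin{equation*}
\phi^n(t)\,=\,1-\frac{\lambda^2}{2}\int_0^t\phi^n(s)\,\bigl\langle\Phi,G(u(s),p(s))\Phi\bigr\rangle_\mathcal{E}\,\dif s+\eps_n(t),
\end{equation*}
with $\sup_{t\in[0,T]}|\eps_n(t)|\to 0$. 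A Gronwall argument then delivers the uniform convergence $\phi^n(t)\to\exp(-\tfrac{\lambda^2}{2}\langle\Phi,C(t)\Phi\rangle_\mathcal{E})$, which is the target Gaussian characteristic function.

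I foresee the main technical obstacle as the Lindeberg bookkeeping when extending the argument to the multidimensional case. Hypothesis \eqref{conv_2ndmoments} is phrased in terms of a single functional $\Phi$, whereas the conditioning step for joint characteristic functions requires the same estimate to hold uniformly for the finite family of linear combinations $\sum_k\lambda_k\Phi_k$ appearing at each induction stage. This reduces to verifying that \eqref{conv_2ndmoments} is stable under finite linear combinations in $\mathcal{E}^\ast$, which is elementary once the threshold $\beta_n$ is slightly enlarged but deserves attention. A secondary technical point is arranging all $o(1)$ errors---the large-jump $L^1$-error from \eqref{conv_2ndmoments}, the small-jump $O(\beta_n)$ Taylor error, and the deterministic-limit error from \eqref{clt_martingale_cond_2}---to hold uniformly in $t\in[0,T]$, which is needed for the final Gronwall step and is straightforward given the integrated form of the hypotheses.
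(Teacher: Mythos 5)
Your proposal is correct and follows exactly the route the paper itself indicates for this theorem: tightness plus identification of the finite-dimensional distributions through convergence of characteristic functions, with the Volterra-equation/Gronwall argument and the $\beta_n$-truncated Taylor expansion being the infinite-dimensional adaptation of the Kurtz--Wainrib estimates that the paper cites (the detailed proof is omitted in the paper and deferred to \cite{RiedlerPhD}). The two technical points you flag at the end are indeed the right ones to watch, and the first is even easier than you suggest, since (D3') is assumed for \emph{all} $\Phi\in\mathcal{E}^\ast$ and hence applies directly to any finite linear combination.
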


The central condition of the convergence of the quadratic variation processes \eqref{clt_martingale_cond_2} is unchanged, however, the second, technical condition \eqref{clt_martingale_cond_1} in (D3) is changed due to the different method of proof. That is, condition \eqref{conv_2ndmoments} arises instead of \eqref{clt_martingale_cond_1} as an assumption on the distribution of the jump heights employing a characterisation of the limit process using convergence of characteristic functions instead of the local martingale problem. The significance for applications of condition \eqref{conv_2ndmoments} in contrast to \eqref{clt_martingale_cond_1} is that the former avoids the almost sure uniform bound on the jump heights in the latter. That is, arbitrarily large jumps are possible for each martingale in the sequence as long as their probability decreases sufficiently fast. Note that \eqref{conv_2ndmoments} is stronger than the similar condition (D2) in the preceding theorem. We omit the proof of the theorem which is an adaptation of the estimating procedures in \cite{Kurtz2,Wainrib1} to the infinite-dimensional setting. For details we refer to the PhD thesis of one of the present authors \cite{RiedlerPhD}.

\begin{rem} We remark without proof that the assumptions (D1) and \eqref{clt_martingale_cond_2} imply the convergence of the trace processes, i.e., for all $T>0$
\begin{equation*}
\lim_{n\to\infty} \alpha_n\int_0^T\EX^n\textnormal{Tr}\,G^n(U^n_s,\Theta^n_s)\,\dif s \ =\ \int_0^T \textnormal{Tr}\,G(u(s),p(s))\,\dif s\,.
\end{equation*}
\end{rem}

\subsection{Langevin approximation}\label{section_langevin_approximation}

Usually, e.g., in models of excitable membranes, one is ultimately interested in the dynamics of the continuous component. We have discussed in Section \ref{limit_appr_pdmps} that the coordinate functions $z^n_i$, \mbox{$i=1,\ldots,m$,} carry all the information needed for the \mbox{dynamics} of the continuous component $(U^n_t)_{t\geq 0}$. Therefore, the knowledge of the coordinate process $(z^n(\Theta^n_t))_{t\geq 0}$, or a close approximation thereof, is sufficient for many applications. From this point of view the significance of  the law of large numbers and the martingale central limit theorem is that they provide a justification of an approximation of the processes $(U^n_t,z^n(\Theta^n_t))_{t\geq 0}$ for large enough $n$ by a deterministic evolution equation, on the one hand, and, as we argue in this section, by a stochastic partial differential equation on the other hand.
 
To this end we first discuss representations of the limiting diffusion in Theorems \ref{clt_via_char_functions} and \ref{clt_by_local_martingale} as a stochastic integral. By definition $G(u(s),p(s))\circ\iota$ is a non-negative, self-adjoint trace class operator acting on $\mathcal{E}$, hence there exists a unique non-negative square root, i.e., a non-negative operator $\sqrt{G(u(s),p(s))\circ\iota}$ such that $G(u(s),p(s))\circ\iota=\sqrt{G(u(s),p(s))\circ\iota}\circ\sqrt{G(u(s),p(s))\circ\iota}$. Let $(W_t)_{t\geq 0}$ be a standard cylindrical Wiener process on $\mathcal{E}$ with covariance operator given by the identity (cf.~\cite{DaPratoZabczyk,PrevotRoeckner}). Then, as
\begin{equation*}
\EX\int_0^t \textnormal{Tr}\bigl(\sqrt{G(u(s),p(s))\circ\iota}\sqrt{I}\bigr)\bigl(\sqrt{G(u(s),p(s))\circ\iota}\sqrt{I}\bigr)^\ast\,\dif s = \int_0^t\textnormal{Tr}\, G(u(s),p(s))\,\dif s<\infty \,,
\end{equation*}
the mapping $t\mapsto\sqrt{G(u(s),p(s))\circ\iota}$ is a valid integrand process for a stochastic integral with respect to $(W_t)_{t\geq 0}$. That is, the process $(Z_t)_{t\geq 0}$ defined for all $t\geq 0$ by
\begin{equation}\label{rep_langevin_stoch_int}
Z_t:=\int_0^t \sqrt{G(u(s),p(s))\circ\iota}\,\dif W_s
\end{equation}
is an $\mathcal{E}$--valued Gaussian process with continuous sample paths and independent increments which, in addition, is also a square-integrable martingale. Moreover, the process has the covariance given by the operator $\int_0^tG(u(s),p(s))\,\dif s$. Therefore, due to unique definition of Gaussian processes via their covariance operators, the process $(Z_t)_{t\geq 0}$ is a version of the limiting diffusion identified  for the sequence of martingales $(\sqrt{\alpha_n}\,M^n_t)_{t\geq 0}$.

Hence, formally inserting the limits into the decomposition of the PDMP we obtain that the \emph{Langevin approximation} $(\ntilde U^n_t,\ntilde P^n_t)_{t\geq 0}$ of $(U^n_t,z^n(\Theta^n_t))_{t\geq 0}$ is given by the solution of the system of stochastic partial differential equations
\begin{equation}\label{general_langevin_approx}
\left.\begin{array}{rcl}
\dif\ntilde U_t^n&=&\bigl(A(\ntilde P^n_t)\,\ntilde U^n_t+B(\ntilde P^n_t,\ntilde U^n_t)\bigr)\,\dif t\\[2ex]
\dif \ntilde P^n_t&=& F(\ntilde P^n_t,\ntilde U^n_t)\,\dif t + \frac{1}{\sqrt{\alpha_n}}\,\sqrt{G(\ntilde U^n_t,\ntilde P^n_t)}\,\dif W_t\,.
\end{array}\right. 
\end{equation}
The sequence of Langevin approximations $(\ntilde U^n_t,\ntilde P^n_t)_{t\geq 0}$ possesses the same asymptotic behaviour as the sequence of processes $(U^n_t,z^n(\Theta_n))_{t\geq 0}$. It is obvious that for $n\to\infty$ and thus $\alpha_n\to\infty$ the noise term in \eqref{general_langevin_approx} vanishes and the system approximates the deterministic solution $(u(t),p(t))_{t\geq 0}$ of the system \eqref{ODE_system}, just as was proven in the law of large numbers Theorem \ref{LLN} for the sequence of PDMPs. It poses no difficulties to make this statement precise in the form of a weak law of large numbers similar to Theorem \ref{LLN}. Thus for large enough $\alpha_n$ one might expect that equation \eqref{general_langevin_approx} produces a similar behaviour than the PDMP with the major advantage of being analytically (and numerically) to a great extent less complex.

In order to analyse properties of the Langevin approximation, clearly, well-posedness of the system \eqref{general_langevin_approx} has to be addressed first. This is suitably done within the \emph{variational approach} to stochastic partial differential equations. That is, equation \eqref{general_langevin_approx} is assumed to hold as an integral equation in $X^\ast\times\mathcal{E}^\ast$ in contrast to the \emph{semigroup approach} which defines the solution via the semigroup generated by the linear part of \eqref{general_langevin_approx} and the variation of constants formula. (Note that in its generic form \eqref{general_langevin_approx} does not neceassarily posses a fully linear part.) The variational approach reflects the approach of using weak solution to abstract evolution equations  defining the deterministic inter-jump motion of PDMPs taken in this paper. We refer to \cite[Sec.~1.3.1]{KaiLiu} for a concise introduction to the variational approach to SPDEs containing an \mbox{existence} and uniqueness theorem as well as further references. We do not pursue the issue of well-posedness of the Langevin approximation any further at this point, as we are of the opinion that this question is best addressed when analysing the Langevin approximation for particular models.

\begin{rem} The process \eqref{rep_langevin_stoch_int} is not necessarily the only stochastic integral process which is a version of the limiting diffusion. Let $U$ be another separable, real Hilbert space, where $U=\mathcal{E}$ is possible, and assume there exists an operator $Q\in L_1(U,U)$ (or $Q$ cylindrical) and a function\footnote{Here, $L^2((0,T),L_2(U,\mathcal{E}))$ denotes the space of square-integrable functions on $(0,T)$ taking values in the Hilbert-space of Hilbert-Schmidt operators from $U$ to $\mathcal{E}$.} $g\in L^2((0,T),L_2(U,\mathcal{E}))$ for all $T>0$ such that $G(u(t),p(t))\circ\iota=g(u(s),p(s))\circ Q\circ g^\ast(u(s),p(s))$ for all $t\geq 0$. Then, the process $(Z_t^Q)_{t\geq 0}$ defined by the stochastic integral
\begin{equation}\label{rep_langevin_stoch_int_alternative}
Z_t^Q:=\int_0^t g(u(s),p(s))\,\dif W^Q_s\,,
\end{equation}
where $(W_t^Q)_{t\geq 0}$ is an $\mathcal{E}$-valued $Q$--Wiener process, has the same quadratic variation as $(Z_t)_{t\geq 0}$ hence the processes coincide in distribution. Then starting from the representation \eqref{rep_langevin_stoch_int_alternative} the Langevin approximation is given by \eqref{general_langevin_approx} with the obvious changes in the diffusion term. We note that in finite dimensions the non-uniqueness, see, e.g., \cite[Chap.~8]{Arnold2}, of a stochastic integral associated to a given covariance matrix can be exploited to improve the speed of numerical approximations in Monte-Carlo simulations of diffusion approximations by choosing an optimal diffusion coefficient structure, see \cite{Melykuti}. In infinite-dimensions the question of a practical implication of choosing a diffusion approximation based on \eqref{rep_langevin_stoch_int_alternative} over \eqref{general_langevin_approx} needs, to the best of our knowledge, still to be addressed.
\end{rem}

\section{Proofs of the main results}\label{section_all_proofs}
\subsection{Proof of Theorem \ref{LLN} (Law of large numbers)}\label{section_proof_LLN}

The central argument of the subsequent proof is an appropriate application of Gronwall's Lemma such that the upper bound satisfies the convergence in probability. Here the estimating procedure yielding the estimate to which Gronwall's Lemma is applied necessitates careful attention due to more intricate regularity \mbox{aspects} of solutions to abstract evolution equations in contrast to solutions of ODEs in Euclidean space.\medskip




The continuous component $U^n_t$ of each PDMP is in between successive jump times the weak solution of an abstract evolution equation. Similarly $u(t)$ is the weak solution of the abstract evolution equation \eqref{ODE_system}. Therefore also the difference of the two paths is in between jump times the weak solution of an abstract evolution equation. It thus holds due to \cite[Sec.~5.9, Thm.~3]{Evans} for almost all $t$ that
\begin{eqnarray*}
\lefteqn{\frac{\dif}{\dif t}\|U^n_t-u(t)\|_H^2=}\\[1ex]
&& 2\bigl\langle A(z^n(\Theta^n_t))\, U^n_t+B(U^n_t,z^n(\Theta^n_t))-A(p(t))\,u(t)-B(u(t),p(t))\,,\,U^n_t-u(t)\bigr\rangle_X\,.
\end{eqnarray*}
Integrating this equation we obtain the integral equation
\begin{eqnarray}\label{int_eq_1}
\lefteqn{\|U^n_{t_1}-u(t_1)\|_H^2\ =\ \|U_{t_0}^n-u_{t_0}\|^2_H}\\[2ex]&&\hspace{-20pt}\mbox{}+2\!\int_{t_0}^{t_1}\!\!\bigl\langle A(z^n(\Theta^n_s))\, U^n_s\!+\!B(U^n_s,z^n(\Theta^n_s))\!-\!A(p(s))\,u(s)\!-\!B(u(s),p(s))\,,\,U^n_s\!-\!u(s)\bigr\rangle_X\,\dif s,\nonumber
\end{eqnarray}
which is valid for almost all $t_0,\,t_1$ in between two successive jump times. Since both sides of equation \eqref{int_eq_1} are continuous the equality \eqref{int_eq_1} holds \emph{for all} $t_0,t_1$ between successive jump times. Moreover, as $U^n_t$ is continuous also at jump times it follows that equation \eqref{int_eq_1} holds for all $t\in[0,T]$, i.e., we have
\begin{eqnarray}\label{int_eq}
\lefteqn{\|U^n_{t}-u(t)\|_H^2\ =\ \|U_{0}^n-u_{0}\|^2_H}\\[1ex]&&\hspace{-20pt}\mbox{}+2\!\int_{0}^{t}\!\bigl\langle A(z^n(\Theta^n_s))\, U^n_s\!+\! B(U^n_s,z^n(\Theta^n_s))\!- \! A(p(s))\,u(s)\!-\! B(u(s),p(s))\,,\,U^n_s\!-\! u(s)\bigr\rangle_X\,\dif s\,.\nonumber
\end{eqnarray}
Next we employ the one-sided Lipschitz condition \eqref{onesided_lip} to estimate the integral in the right hand side of equation \eqref{int_eq}. This yields the inequality
\begin{equation}
\|U_t^n-u_t\|^2_H
\ \leq\ \|U_0^n-u_0\|^2_{H}+2L_1\int_0^t \|U_s^n-u(s)\|_{H}^2\, \dif s
+2L_1\sum_{j=1}^m\int_0^t\|z^n_j(\Theta^n_s)-p_j(s) \|^2_{E}\, \dif s.\label{growth_ineq_v}
\end{equation}
The overall aim is to apply Gronwall's inequality to the growth inequality \eqref{growth_ineq_v}. Therefore, in the next step we derive a control on the terms $\|z^n_j(\Theta^n_s)-p_j(s) \|^2_{E}$ in the right hand side of inequality \eqref{growth_ineq_v}. As $p$ is a solution of \eqref{ODE_system} satisfying \eqref{det_limit_int_equation} we obtain for every functional $\phi\in E^\ast$ a decomposition
\begin{eqnarray}\label{decomposition}
\lefteqn{\langle\phi, z_j^n(\Theta_t^n)-p_j(t) \rangle_E\ = \ \langle\phi,z_j^n(\Theta^n_0)-p_j(0)\rangle_E}\\[1ex]
&&\phantom{xxx}\mbox{}+\int_0^t \bigl[\mathcal{A}^n \langle\phi,z_j^n(\cdot)\rangle_E\bigr](U^n_s,\Theta^n_s)\,\dif s-\int_0^t\langle\phi, F_j(p(s),u(s)) \rangle_E\ \dif s +\langle\phi,M_j^n(t)\rangle_E,\nonumber
\end{eqnarray}
where the term $\langle\phi,M_j^n(t)\rangle_E$ has precisely the form \eqref{local_martingale} for all $t\in[0,T]$. Next we expand the decomposition \eqref{decomposition} to obtain
\begin{eqnarray*}
\lefteqn{\langle\phi, z_j^n(\Theta_t^n)-p_j(t) \rangle_E\ = \langle\phi,z_j^n(\Theta^n_0)- p_j(0)\rangle_E}\\[1.5ex]
&&\mbox{}+\langle\phi,M_j^n(t)\rangle_E+\int_0^t \bigl[\mathcal{A}^n \langle\phi,z_j^n(\cdot)\rangle_E\bigr](U^n_s,\Theta^n_s)-\langle\phi, F_j(z^n(\Theta^n_s),U_s^n)\rangle_E\,\dif s\\
&&\mbox{}+\int_0^t\bigl\langle\phi, F_j(z^n(\Theta^n_s),U_s^n)-F_j(p(s),u(s))\bigr\rangle_E\,\dif s\,.
\end{eqnarray*}
We take the supremum over all $\phi\in E^\ast$ with $\|\phi\|_{E^\ast}\leq 1$ on both sides of this equation, square both sides and apply to the right hand side the inequality $|a_1+\ldots+a_k|^2\leq k(|a_1|^2+\ldots+|a_k|^2)$ and the Cauchy-Schwarz inequality which yields
\begin{eqnarray*}
\lefteqn{\|z_j^n(\Theta_t^n)-p_j(t) \|_E^2}\nonumber\\[2ex]
&\hspace{-5pt}\!\!\leq\! & \!\!\!\!4\,\|z_j^n(\Theta^n_0)\!-\! p_j(0)\|^2_E+4\,\|M_j^n(t)\|^2_E+4\Bigl(\int_0^t\!\!\big\|F_j(z^n(\Theta^n_s),U_s^n\!-\!F_j(p(s),u(s))\big\|_E\,\dif s\Bigr)^2\\
&&\mbox{}+4\Bigl(\int_0^t \big\|\bigl[\mathcal{A}^n \langle\,\cdot\,,z_j^n(\cdot)\rangle_E\bigr](U^n_s,\Theta^n_s)- F_j(z^n(\Theta^n_s),U_s^n)\bigl\|_E\,\dif s\Bigr)^2\,.\nonumber
\end{eqnarray*}
We next apply the Lipschitz condition \eqref{det_sys_lipschitz} on $F$ and obtain the estimate
\begin{eqnarray}\label{proof_ineq1}
\lefteqn{\|z_j^n(\Theta_t^n)-p_j(t) \|_{E}^2} \nonumber\\[1ex]
&\hspace{-5pt}\!\!\leq\!\!\!& \!\!\! 4\,\|z_j^n(\Theta^n_0)- p_j(0)\|_{E}^2+4L_2\int_0^t
\! \|U^n_s-u(s)\|_H^2\,\dif s+4L_2\sum_{i=1}^m\int_0^t\!\|z_i^n(\Theta^n_s)-p_i(s)\|_{E}^2\,\dif s\nonumber\\
&&\mbox{}+4\Bigl(\int_0^t\big\|\bigl[\mathcal{A}^n \langle\,\cdot\,,z_j^n(\cdot)\rangle_E\bigr](U^n_s,\Theta^n_s)-F_j(z^n(\Theta^n_s),U_s^n)\big\|_{E}\,\dif s\Bigr)^2
+\|M_j^n(t)\|_{E}^2\,.
\end{eqnarray}
To further estimate this term we employ the convergence \eqref{uniform_martingale_bound} of the term $\|M^n_j\|_E$ and the convergence \eqref{finite_var_bound} of the generator. It follows by the definition of these limits that for every $\eps_1>0$ and every $\delta>0$ we can find an $N_{\eps_1,\delta}$ such that for all $n\geq N_{\eps_1,\delta}$ it holds due to \eqref{uniform_martingale_bound} for all $j=1,\ldots,m$ and all $t\in [0,T]$ that
\begin{equation*}
\|M_j^n(t)\|_{E}\leq \sqrt{\frac{\eps_1}{m}},
\end{equation*}
and due to \eqref{finite_var_bound} and the Continuous mapping Theorem that 
\begin{equation*}
\Bigl(\int_0^T\big\|\bigl[\mathcal{A}^n \langle\,\cdot\,,z_j^n(\cdot)\rangle_E\bigr](U^n_s,\Theta^n_s)-F_j(z^n(\Theta^n_s),U_s^n)\big\|_{E}\,\dif s\Bigr)^2\,\leq\, \frac{\eps_1}{m}
\end{equation*}
on a set $\Omega_1\subset\Omega$ satisfying $\Pr^n(\Omega\backslash\Omega_1)\leq\delta$ for all $n\geq N_{\eps_1,\delta}$.
Thus continuing to estimate only for paths on the set $\Omega_1$ we obtain from \eqref{proof_ineq1} the inequality
\begin{eqnarray}\label{proof_growth_est_1}
\lefteqn{\|z_j^n(\Theta_t^n)-p_j(t) \|_{E}^2\ \leq\ 4\,\|z_j^n(\Theta^n_0)- p_j(0)\|_{E}^2+5\,\frac{\eps_1}{m}}\\[1ex]
&&\phantom{xxxxxxxxxxxxx}\mbox{}+4L_2\int_0^t
 \|U^n_s-u(s)\|_H^2\,\dif s+4L_2\sum_{i=1}^m\int_0^t\|z_i^n(\Theta^n_s)-p_i(s)\|_{E}^2\,\dif s.\nonumber
\end{eqnarray}
In order to finally obtain the growth estimate suitable for an application of Gronwall's inequality we add inequality \eqref{growth_ineq_v} and inequalities \eqref{proof_growth_est_1} for all $j=1,\ldots,m$ which yields
\begin{eqnarray}\label{last_long_ineq}
\lefteqn{\|U_t^n-u(t)\|^2_H+\sum_{j=1}^m\| z_j^n(\Theta_t^n)-p_j(t) \|_{E}^2\ \leq\ \|U_0^n-u_0\|^2_{H}+4\sum_{j=1}^m\|z_j^n(\Theta^n_0)- p_j(0)\|_{E}^2}\\
&&\qquad\qquad\phantom{xxxxxxxx}\mbox{}+5\eps_1+C\int_0^t \|U_s^n-u(s)\|_{H}^2\ \dif s
+C\sum_{j=1}^m\int_0^t\|z^n_j(\Theta^n_s)-p_j(s) \|^2_{E}\,\dif s\nonumber
\end{eqnarray}
with constant $C=2L_1+4L_2m$.
An application of Gronwall's inequality to \eqref{last_long_ineq} yields
\begin{equation}\label{the_result_LLN_after_Gronwall}
\sup_{t\in[0,T]}\Bigl( \|U_t^n-u(t)\|^2_H+\sum_{j=1}^m\| z_j^n(\Theta_t^n)-p_j(t) \|_{E}^2\Bigr) \ \leq \ K_1\,\e^{C\,T}\,
\end{equation}
where
\begin{equation*}
K_1=\|U_0^n-u_0\|^2_{H}+4\sum_{j=1}^m\|z_j^n(\Theta^n_0)- p_j(0)\|_{E}^2+5\eps_1\,.
\end{equation*}
Finally, due to (C3), i.e., the convergence in probability of the initial conditions, it holds that for every $\eps_2>0$ we can find to every $\delta>0$ an $N_{\eps_2,\delta}$ such that on a set $\Omega_2\subset\Omega$ with $\Pr^n(\Omega\backslash\Omega_2)<\delta$ it holds for all $n\geq N_{\eps_2,\delta}$ that
\begin{equation}
\|U_0^n-u_0\|_H^2\leq \frac{\eps_2}{m+2},\quad \|z_j^n(\Theta^n_0)- p_j(0)\|_{E}^2\leq\frac{\eps_2}{4(m+2)}\quad \forall\, j=1,\ldots,m\,.
\end{equation}
Let $\eps,\,\delta>0$ be arbitrary. Then we obtain choosing $\eps_2=\eps\,\e^{-CT}$ and $\eps_1=\frac{\eps_2}{5(m+2)}$, thus $K_1=\eps_2$, that for all $n\geq N_{\eps,\delta}:= N_{\eps_1,\delta}\vee N_{\eps_2,\delta}$ it holds that
\begin{equation*}
\sup\nolimits_{t\in[0,T]}\Bigl( \|U_t^n-u(t)\|^2_H+\sum_{j=1}^m\| z_j^n(\Theta_t^n)-p_j(t) \|_{E}^2\Bigr)\, \leq\, \eps 
\end{equation*}
on the set $\Omega_1\cap\Omega_2$. Therefore it holds for all $n\geq N_{\eps,\delta}$ that
\begin{eqnarray*}
\Pr^n\Bigl[\sup\nolimits_{t\in[0,T]}\Bigl( \|U_t^n-u(t)\|^2_H+\sum_{j=1}^m\| z_j^n(\Theta_t^n)-p_j(t) \|_{E}^2\Bigr) \ > \ \eps\Bigr]&\leq& 2\delta\,.
\end{eqnarray*}
As $\delta$ and $\eps$ are arbitrary the statement \eqref{conv_estimate} follows.

\subsection{Proof of Theorem \ref{clt_by_local_martingale} (Central limit theorem)}\label{sec_clt_proofs}

The proof of Theorem \ref{clt_by_local_martingale} is split into three successive steps. In the first step we prove tightness of the sequence of martingales which guarantees the existence of a limit. Secondly, we show that any limit is a continuous process. Finally,  in the last step we prove that the limit is the specific diffusion process as stated in the theorem. The conditions (D1)--(D3) in Theorem \ref{clt_by_local_martingale} are such that each, in addition, to the preceding is needed in the successive steps of the proof. 



\medskip

\emph{Tightness}\medskip

In order to prove tightness of the sequence of $\mathcal{E}$--valued martingales $(\sqrt{\alpha_n}\,M^n_t)_{t\geq 0}$ it suffices to show that the following conditions are satisfied, cf.~\cite{Metivier} wherein general conditions for tightness of sequences of Hilbert space valued processes and, in particular, martingales are considered:
\begin{enumerate}
\item[(T1)] The sequence of initial conditions $(\sqrt{\alpha_n}\, M^n_0)_{n\geq 0}$ is tight.
\item[(T2)] For all $t\geq 0$ it holds that
\begin{equation}\label{metivier_corol_2}
\lim_{\delta\to\infty}\sup_{n\in\mathbb{N}}\Pr^n\bigl[\textnormal{Tr}\ll\!\!\sqrt{\alpha_n}\,M^n\!\!\gg\!\!_t>\delta\bigr]=0\,,
\end{equation}
and there exists an orthonormal basis $(\varphi_k)_{k\in\mathbb{N}}$ of $E^\ast$ such that for each $\eps>0$
\begin{equation}\label{metivier_corol_1}
\lim_{m\to\infty}\sup_{n\in\mathbb{N}}\,\Pr^n\Bigl[\sum\nolimits_{k>m} \langle \varphi_k, \ll\!\! \sqrt{\alpha_n}\,M^n\!\!\gg\!\!_t \,\varphi_k\rangle_E>\eps\Bigr]=0. 
\end{equation}
\item[(A)]  The sequence of the real-valued trace processes $(\textnormal{Tr}\ll\!\!\sqrt{\alpha_n}\,M^n\!\!\gg_t)_{t\geq 0}$, $n\geq \mathbb{N}$, satisfies the \emph{Aldous condition}: For every $T,\eps,\delta>0$ there exists a $h>0$ and an $N>0$ such that for any sequence of stopping times\footnote{Here every $\sigma^n$ is a stopping time on the respective probability space $(\Omega^n,\sF^n,(\sF^n_t)_{t\geq 0},\Pr^n)$.} $(\sigma^n)_{n\geq 0}$ with $\sigma^n\leq T$ it is valid that
\begin{equation}\label{aldous_condition}
\sup_{n\geq N}\sup_{0\leq s\leq h}\Pr^n\bigl[\,|\,\textnormal{Tr}\ll\!\!\sqrt{\alpha_n}\,M^n\!\!\gg_{\sigma^n+s}-\textnormal{Tr}\ll\!\!\sqrt{\alpha_n}\,M^n\!\!\gg_{\sigma^n}|\geq \delta\,\bigr]\leq\eps\,. 
\end{equation}
\end{enumerate}

We next establish the above conditions. First note that condition (T1) is trivially satisfied as $M^n_0=0$ for all $n>0$. Hence we proceed to condition (T2). In order to establish the first condition \eqref{metivier_corol_2} we use Markov's inequality to obtain the estimate
\begin{eqnarray*}
\Pr^n\bigl[\textnormal{Tr}\ll\!\! \sqrt{\alpha_n}\,M^n\!\!\gg_t\,>\delta\bigr]&\leq& \frac{\alpha_n}{\delta}\,\EX^n\Bigl[\int_0^t \textnormal{Tr}\,G^n(Y^n_s,\theta^n_s)\,\dif s\Bigr]\,,
\end{eqnarray*}
where the right hand side is finite due to assumption \eqref{first_clt_section_theorem_2nd_mom_cond}. Taking the supremum on both sides the same assumption implies \eqref{metivier_corol_2}.

Next, in order to show the second condition \eqref{metivier_corol_1} we employ Markov's inequality, the monotone convergence theorem (in order to change the order of expectation and the countable summation over all $k>m$), the form of the quadratic variation \eqref{explicit_def_cross_1} and inequality \eqref{alternative_tightness_cond} to obtain for the term in the left hand side the estimates
\begin{eqnarray*}
\Pr^n\bigl[\sum\nolimits_{k>m} \langle \varphi_k,\ll\!\!\sqrt{\alpha_n}\, M^n\!\!\gg_t \varphi_k\rangle_\mathcal{E}>\delta\bigr]&\leq& \frac{\alpha_n}{\delta}\,\EX^n\bigl[\sum\nolimits_{k> m} \langle \varphi_k,\ll\!\! M^n\!\!\gg_t \varphi_k\rangle_\mathcal{E}\bigr]\\
&\leq& \frac{1}{\delta}\Bigl(\sum\nolimits_{k>m} \gamma_k\Bigr)\,C(t)\,,
\end{eqnarray*}
where the upper bound is independent of $n\in\mathbb{N}$. Moreover, the property $\sum_{k\in\mathbb{N}}\gamma_k<\infty$ implies that $\lim_{m\to\infty} \sum_{k>m}\gamma_k=0$ and hence \eqref{metivier_corol_1} holds for all $t\geq 0$. 

Finally, it remains to show (A). Let $T,\delta>0$ and $\sigma^n<T$ be an arbitrary sequence of stopping times, then for all for all $h>0$ it holds that for $s\leq h$
\begin{eqnarray*}
\lefteqn{\Pr^n\Bigl[\,\big|\,\alpha_n\textnormal{Tr}\ll\!\! M^n\!\!\gg_{\sigma_n+s}-\,\alpha_n\textnormal{Tr}\ll\!\! M^n\!\!\gg_{\sigma_n}\big|\geq \delta\Bigr]}\\
&&\phantom{xxxxxxxxxxxxxxx}\leq\ \frac{\alpha_n}{\delta}\,\EX^n\Bigl[\int_{\sigma^n}^{\sigma^n+h}\textnormal{Tr}\,G^n(Y^n_r,\theta^n_r)\,\dif r\Bigr]\\ 
&&\phantom{xxxxxxxxxxxxxxx}=\ \frac{1}{\delta} \EX^n\Bigl[\sum_{k\in\mathbb{N}}\alpha_n\,\EX^{X_{\sigma^n}^n}\int_0^h\langle\varphi_k,G^n(Y^n_r,\theta^n_r)\varphi_k\rangle_{\mathcal{E}}\,\dif r\Bigr]\\
&&\phantom{xxxxxxxxxxxxxxx}\leq\ \frac{C(h)}{\delta}\sum_{k\in\mathbb{N}} \gamma_k.
\end{eqnarray*}
Here we have used Markov's inequality, the strong Markov property of the PDMP and the assumption \eqref{alternative_tightness_cond}. As the final upper bound is independent of $s$ and $n$ and converges to zero for $h\to0$ condition (A) follows.

\medskip

\emph{Any Limit is a continuous process}

\medskip

In the preceding part of the proof we have established that the laws of the sequence of martingales $(\sqrt{\alpha_n}\, M^n_t)_{t\geq 0}$ are tight which is equivalent to there existence of a weakly convergent subsequence. We now prove that under the additional condition (D2) any cluster point of the sequence is a measure supported on $C(\rr_+,\mathcal{E})$. 
%
The method of proof follows the outline of \cite[Lemma 3.2]{Kallianpur} adapted for the stochastic processes being PDMPs on Hilbert spaces, the general setup in this study and the particular conditions (D1) and (D2) in Theorem \ref{clt_by_local_martingale} which differ from \cite{Kallianpur}. Furthermore, we have extended the result in \cite[Lemma 3.2]{Kallianpur}, which only considers convergence on finite time intervals $[0,T]$, to convergence on $D(\rr_+,\mathcal{E})$. In the following we employ the abbreviations $Z^n_t:=\sqrt{\alpha_n}\,M^n_t$ and $\Delta_t Z^n:=Z^n_t-Z^n_{t-}$, i.e., $(\Delta_t Z^n)_{t\geq 0}$ denotes the process of jump heights. Note that $\Delta_t Z^n=\sqrt{\alpha_n}\,\Delta_tz^n(\theta^n)$.

Further, let $\Pr^\ast$ denote an accumulation point of the sequence $(\Pr^n)_{n\in\mathbb{N}}$. Without loss of generality we use $\Pr^n,\,n\geq 1$, to also denote the subsequence converging weakly to $\Pr^\ast$. 
Furthermore, here $\Pr^n$ is understood as a law on the Skorokhod space $D(\rr_+,\mathcal{E})$ given by the pushforward measure of the process $(\sqrt{\alpha_n}\,M^n_t)_{t\geq 0}$. Then due to the Skorokhod Representation Theorem, e.g., \cite[Chap.~3,Thm.~1.8]{EthierKurtz}, there exists a probability space $(\Omega^o,\sF^o,\Pr^o)$ supporting $D(\rr_+,\mathcal{E})$--valued random variables $\zeta^n,\,n\geq 1$, and $\zeta^\ast$ with distributions $\Pr^n$ and $\Pr^\ast$, respectively, such that $\zeta^n$ converges to $\zeta^\ast$ almost surely with respect to $\Pr^o$. Further, it clearly holds that $\EX^n f(Z^n)=\EX^of(\zeta^n)$ for \mbox{suitable functionals $f$.}\medskip

We begin the proof with preliminary estimates on functions evaluated along the path of the PDMPs. These ultimately allow to infer that the process of jumps vanishes in the limit. Let $g$ be a measurable, bounded, non-negative function $g:\rr\to\rr$, that vanishes in a neighbourhood of $0$ and of $\infty$, that is, there exists a finite constant $C_g:=\sup_{x\in\rr} g(x)/x^2<\infty$. For such a function $g$ and any $\Phi\in\mathcal{E}^\ast$ we define the process
\begin{eqnarray*}
G^n_t(\langle\Phi,Z^n\rangle_\mathcal{E})&:=&\sum_{s\in(0,t]} g\bigl(\langle\Phi,\Delta_sZ^n\rangle_\mathcal{E}\bigr)\\
&&\hspace{-15pt}\mbox{}-\int_0^t\Lambda^n(U^n_s,\Theta^n_s)\int_{K_n} g\bigl(\sqrt{\alpha_n}\langle \Phi,z^n(\xi) -z^n(\Theta_{s}^n) \rangle_\mathcal{E}\bigr)\,\mu^n\bigl((U^n_s,\Theta^n_s)\bigr)\,\dif s\\
&=& \int_0^t\int_{K_n} g\bigl(\sqrt{\alpha_n}\langle \Phi,z^n(\xi)-z^n(\Theta_{s-}^n) \rangle_\mathcal{E}\bigr)\, M^n(\dif\xi,\dif s)\,,
\end{eqnarray*}
where $M^n$ is the martingale measure associated with the PDMP, and we infer that $G^n_t(\langle\Phi,Z^n\rangle_\mathcal{E})$ is a martingale. Note that the above summation over all $s\in (0,t]$ is well-defined as the PDMPs are regular and thus $g\bigl(\langle\Phi,\Delta_sZ^n\rangle_\mathcal{E}\bigr)$ is non-zero for only finitely many $s\leq t$. \medskip

The proof now proceeds as follows. We first show (a) that for all $t\geq 0$ the random variables $G^n_t(\langle\Phi,\zeta^n\rangle_\mathcal{E}),\,n\in\mathbb{N},$ are uniformly integrable for all $t$ and (b) that they converge to $\sum\nolimits_{s\in(0,t]} g\bigl(\langle\Phi,\Delta_s\zeta^n\rangle_\mathcal{E}\bigr)$ in probability. This allows to infer that the convergence result also holds as convergence in mean. 
In part (c) we then use these results to show that the jump heights of the canonical process of the law $\Pr^\ast$ are constantly zero almost surely. This implies that $\Pr^\ast\bigl(C([0,t],\mathcal{E})\bigr)=1$ for every $t>0$  where $C([0,t],\mathcal{E})$ is understood as the subset of $D(\rr_+,\mathcal{E})$ consisting of those c\`adl\`ag functions which are continuous up to and including time $t$. The proof is completed by (d) extending this result to $\Pr^\ast\bigl(C(\rr_+,\mathcal{E})\bigr)=1$.\medskip

(a)\quad To show that the sequence of random variables $G^n_t(\langle\Phi,\zeta^n\rangle_\mathcal{E}),\,n\in\mathbb{N}$, is uniformly integrable in the space $(\Omega^o,\sF^o,\Pr^o)$ it is sufficient that the second moments are uniformly bounded, cf.~\cite[Appendix, Prop.~2.2]{EthierKurtz}. The It{\^o}-isometry for real-valued stochastic integrals with respect to the associated martingale measures, which is implied by taking the expectation of the processes in \cite[Prop.~4.5.3]{Jacobsen}, yields
\begin{eqnarray*}
\lefteqn{\sup_{n\in\mathbb{N}} \EX^o|G^n_t(\langle\Phi,\zeta^n\rangle_\mathcal{E})|^2\ =\ \sup_{n\in\mathbb{N}}\EX^n|G^n_t(\langle\Phi,Z^n\rangle_\mathcal{E})|^2}\\
&=&\sup_{n\in\mathbb{N}}\EX^n\Bigl[\int_0^t\Lambda^n(U^n_s,\Theta^n_s)\int_{K_n} g\bigl(\sqrt{\alpha_n}\langle \Phi,z^n(\xi)-z^n(\Theta_{s}^n) \rangle_\mathcal{E}\bigr)^2\,\mu^n\bigl((U^n_s,\theta^n_s),\dif\xi\bigr)\,\dif s\Bigr].
\end{eqnarray*}
Therefore, employing the special structure of the map $g$  we obtain the estimate
\begin{eqnarray*}
\lefteqn{\sup_{n\in\mathbb{N}} \EX^o|G^n_t(\langle\Phi,\zeta^n\rangle_\mathcal{E})|^2}\\
&\leq&\! C_g\, \sup_{n\in\mathbb{N}}\,\alpha_n\EX^n\Bigl[\int_0^t\Lambda^n(U^n_s,\Theta^n_s)\int_{K_n} \big|\langle \Phi,z^n(\xi) -z^n(\Theta_{s}^n)\rangle_\mathcal{E}\big|^2\,\mu^n\bigl((U^n_s,\Theta^n_s),\dif\xi\bigr)\,\dif s\Bigr],
\end{eqnarray*}
where the right hand side is finite for every $t>0$ due to condition \eqref{first_clt_section_theorem_2nd_mom_cond} in (D1).
\medskip

(b)\quad In this part of the proof we establish convergence in probability of the random variables $G^n_t(\langle\Phi,\Delta\zeta^n\rangle_\mathcal{E})$. Let $\beta>0$ be such that $g(x)=0$ for $|x|\leq\beta$, i.e., the interval $(-\beta,\beta)$ is contained in the neighbourhood of $0$ whereon $g$ vanishes. Then we obtain using Markov's inequality and due to the boundedness of $g$ the estimates
\begin{eqnarray*}
\lefteqn{\Pr^o\Bigl[\sum\nolimits_{s\in(0,t]} g(\langle\Phi,\Delta_s\zeta^n\rangle_\mathcal{E})-G^n_t(\langle\Phi,\zeta^n\rangle_\mathcal{E})\, > \, \delta\Bigr]}\\
&\!\!=&\!\!\Pr^n\Bigl[\int_0^t\Lambda^n(U^n_s,\Theta^n_s)\int_{K_n}g\bigl(\sqrt{\alpha_n}\,\langle \Phi,z^n(\xi) -z^n(\Theta_{s}^n) \rangle_E\bigr)\,\mu^n\bigl((U^n_s,\Theta^n_s),\dif\xi\bigr)\,\dif s\, >\, \delta\Bigr]\\
&\!\!\leq &\!\! 
\frac{1}{\delta}\,\EX^n\Bigl[\int_0^t\Lambda^n(U^n_s,\Theta^n_s)\int_{K_n}g\bigl(\sqrt{\alpha_n}\langle \Phi,z^n(\xi) -z^n(\Theta_{s}^n) \rangle_E\bigr)\,\mu^n\bigl((U^n_s,\Theta^n_s)\bigr)\,\dif s\Bigr]\\
&\!\!\leq &\!\!
\frac{\sup_{x\in\rr}|g(x)|}{\delta}\, \EX^n\Bigl[\int_0^t\Lambda^n(U^n_s,\Theta^n_s) \int
_{\sqrt{\alpha_n}\,|\langle \Phi,z^n(\xi) -z^n(\Theta^n_s)\rangle_E |> \beta}
\mu^n\bigl((U^n_s,\Theta^n_s),\dif\xi\bigr)\,\dif s\Bigr].
\end{eqnarray*}
Thus due to condition \eqref{conv_gen_support_thm} in (D2) it holds that
\begin{equation*}
\lim_{n\to\infty}\Pr^o\Bigl[\sum\nolimits_{s\in(0,t]} g(\langle\Phi,\Delta_s\zeta^n\rangle_\mathcal{E})-G^n_t(\langle\Phi,\zeta^n\rangle_\mathcal{E})\, > \, \delta\Bigr] =0\,.
\end{equation*}
Moreover, it holds on $(\Omega^o,\sF^o,\Pr^o)$ almost surely that 
\begin{equation*}
\lim_{n\to\infty}\sum_{s\in(0,t]} g(\langle\Phi,\Delta_s\zeta^n \rangle_\mathcal{E}) = \sum_{s\in(0,t]} g(\langle\Phi,\Delta_s\zeta^\ast \rangle_\mathcal{E})\,.
\end{equation*}
Therefore, combining these two convergence results we obtain that 
\begin{equation}\label{support_proof_main_convergence}
G^n_t(\langle\Phi,\zeta^n\rangle_\mathcal{E}) \longrightarrow \sum_{s\in(0,t]} g(\langle\Phi,\Delta_s\zeta^\ast \rangle_\mathcal{E})
\end{equation}
holds as convergence in probability. \medskip

(c)\quad From parts (a) and (b) we infer that \eqref{support_proof_main_convergence} also holds as convergence in mean. Together with Jensen's inequality this implies
\begin{eqnarray*}
\lefteqn{\lim_{n\to\infty} \Big|\EX^o\Bigl(G^n_t(\langle\Phi,\Delta_s \zeta^n\rangle_\mathcal{E})-\sum_{s\in(0,T]} g(\langle\Phi,\Delta_s\zeta^\ast\rangle_\mathcal{E})\Bigr)\Big|}\\
&&\phantom{xxxxxxxxxxxxx}\leq\ \lim_{n\to\infty} \EX^o\Big|G^n_t(\langle\Phi,\Delta_s \zeta^n\rangle_\mathcal{E})-\sum_{s\in(0,t]} g(\langle\Phi,\Delta_s\zeta^\ast\rangle_\mathcal{E})\Big|\ =\ 0\,,
\end{eqnarray*}
and hence we infer that
\begin{equation}\label{the_limit_we_derived_uniform_integrability_for}
\EX^o \sum_{s\in(0,t]} g(\langle\Phi,\Delta_s \zeta^\ast\rangle_\mathcal{E})
\,=\, \lim_{n\to\infty} \EX^o G^n_t(\langle\Phi,\zeta^n\rangle_\mathcal{E})\,. 
\end{equation}
Furthermore, $G_t^n(\langle\Phi,Z^n\rangle_\mathcal{E})$ is a martingale which satisfies $G_0^n(\langle\Phi,Z^n\rangle_\mathcal{E})=0$. This, in turn, implies that $\EX^nG_t^n(\langle\Phi,Z^n\rangle_\mathcal{E})=0$ for every $n\in\mathbb{N}$. Therefore we obtain due to \eqref{the_limit_we_derived_uniform_integrability_for}
\begin{eqnarray}\label{some_add_property_suppoRT_of_limiz}
\EX^\ast \sum_{s\in(0,t]} g(\langle\Phi,\Delta_s Z\rangle_\mathcal{E}) &=& \EX^o \sum_{s\in(0,t]} g(\langle\Phi,\Delta_s \zeta^\ast\rangle_\mathcal{E})\\
&=& \lim_{n\to\infty} \EX^o G^n_t(\langle\Phi,\zeta^n\rangle_\mathcal{E})
\ =\ \lim_{n\to\infty} \EX^n G^n_t(\langle\Phi,Z^n\rangle_\mathcal{E})
\ =\ 0\,.\nonumber
\end{eqnarray}
In a next step, let $g_m$ be a sequence of functions satisfying the properties for functions $g$ proposed above. Further we assume that the functions $g_m(x)$ increase pointwise to $x^2$ for $m\to\infty$ (for an example of such functions we refer to \cite{Kallianpur}). Then due to the monotone convergence theorem it holds that
\begin{equation*}
\lim_{m\to\infty} \EX^\ast \sum_{s\in(0,t]} g_m(\langle\Phi,\Delta_sZ\rangle_\mathcal{E}) \,=\, \EX^\ast \sum_{s\in(0,t]} |\langle\Phi,\Delta_sZ\rangle_\mathcal{E}|^2\,.
\end{equation*}
Furthermore, the limiting expectation in the right hand side is zero as each element of the sequence of expectations in the left hand side is zero due to \eqref{some_add_property_suppoRT_of_limiz}. Next we choose $\Phi$ to be an element of an orthonormal basis $(\varphi_k)_{k\in\mathbb{N}}$ of $\mathcal{E}$ and sum the expectations over all elements of the basis yielding
\begin{equation*}
\sum_{k\in\mathbb{N}}\EX^\ast \sum\nolimits_{s\in(0,t]} |\langle\varphi_k,\Delta_sZ\rangle_\mathcal{E}|^2\,. 
\end{equation*}
Due to the dominated convergence theorem we can interchange the countable summation and the expectation and, as the PDMP is regular, we afterwards interchange the resulting two summation inside the expectation. Then Parseval's identity yields
\begin{equation*}
\EX^\ast \sum_{s\in(0,t]} \|\Delta_s Z\|_\mathcal{E}^2 \,=\,0\,. 
\end{equation*}
As the non-negative random variable inside the expectation is zero only for continuous paths of the process $(Z_s)_{s\in[0,t]}$ we infer that almost all paths are continuous, i.e., $\Pr^\ast\bigl(C([0,t],\mathcal{E})\bigr)=1$.\medskip

(d)\quad To conclude the proof let $t_k,\,k\in\mathbb{N}$, be a sequence of times increasing to infinity then
\begin{equation*}
C(\rr_+,\mathcal{E})=\bigcap_{k\in\mathbb{N}} C([0,t_k],\mathcal{E})\,,
\end{equation*}
and the events in the right hand side satisfy $C([0,t_{k+1}],\mathcal{E})\subseteq C([0,t_k],\mathcal{E})$. The properties of a probability measure thus yield
\begin{equation*}
\Pr^\ast\bigl(C(\rr_+,\mathcal{E})\bigr)=\lim_{k\to\infty}\,\Pr^\ast\bigl(C([0,t_k],\mathcal{E})\bigr)=1\,,
\end{equation*}
that is a process with distribution given by the limit $\Pr^\ast$ possesses almost surely continuous paths.

\medskip

\emph{Limit is a diffusion process}\medskip

In the final part of the proof we uniquely characterise the limit of the sequence of martingales $(\sqrt{\alpha_n}\,M^n_t)_{t\geq 0}$ under the additional assumptions (D3). The method of proof is via the local martingale problem motivated by a proof presented in \cite{Metivier}, i.e., the limiting probability measure is the unique solution to a particular martingale problem. The author in \cite{Metivier} considers Hilbert space valued stochastic integral equations driven by Hilbert space valued martingales with state dependent quadratic variation. A central limit theorem for the martingales is presented. The arguments of the subsequent proof are closely related to \cite{Metivier}. This is as the general result on martingales associated with PDMPs, which we have proven in Section \ref{section_associated_martingale}, result in the problem in this part of the proof to be of the same underlying structure as in \cite{Metivier}. One difference, however, is that the present conditions (D1)--(D3) are more general than the conditions in \cite{Metivier} and adapted to the PDMP setup, hence some estimates differ.

As in the preceding part of the proof we interpret the sequence of martingales $(\sqrt{\alpha_n}\,M^n_t)_{t\geq 0}$ defined on the probability spaces $(\Omega^n,\sF^n,(\sF^n_t)_{t\geq 0},\Pr^n)$ as random variables on the space $D(\rr_+,\mathcal{E})$ equipped with its natural $\sigma$-field $\mathcal{D}$. Further, laws on the canonical space are given by the pushforward measure. In order to simplify the notation we denote the laws on the canonical space also by $\Pr^n$. Due to results in the preceding two parts of the proof we know the sequence $\Pr^n$, $n\in\mathbb{N}$, admits a limit $\Pr^\ast$ supported on $C(\rr_+,\mathcal{E})$. We use $(\zeta_t)_{t\geq 0}$ to denote the canonical process %
on $D(\rr_+,\mathcal{E})$ which is a version of the martingale $(\sqrt{\alpha_n}\,M^n_t)_{t\geq 0}$ under the push-forward maesure $\Pr^n$ for all $n\in\mathbb{N}$ or of the weak limit under the measure $\Pr^\ast$.

In the following we prove that the limit $\Pr^\ast$ is a solution to a local martingale problem the unique solution of which is an $\mathcal{E}$--valued centered diffusion process with covariance operator $C(t)\in L_1(\mathcal{E}^\ast,\mathcal{E})$ as given in \eqref{diffusion_limit_covariance}. For any twice continuously differentiable function $f:\mathcal{E}\to\rr$ the extended generator $\mathcal{A}f$ of such a diffusion is given by
\begin{equation*}
\mathcal{A}f(x,t)=\frac{1}{2}\textnormal{Tr}\,(D^2f(x)\circ G(t))\,. 
\end{equation*}
Then, in order to uniquely characterise the solution to the local martingale problem connected with this generator and supported on the space $C(\rr,\mathcal{E})$ it suffices to consider mappings $f$ of the form $\langle\Phi,\cdot\rangle_\mathcal{E}$ and $\langle\Phi,\cdot\rangle_\mathcal{E}^2$ for all $\Phi\in\mathcal{E}^\ast$, cf.~\cite{Metivier}. %
That is, we have to show that the canonical process $\zeta_t$ is such that for all $\Phi\in \mathcal{E}^\ast$ the processes $\langle\Phi,\zeta_t \rangle_{\mathcal{E}}$ and
\begin{equation}\label{martingale_problem_proof_2} 
\langle\Phi,\zeta_t \rangle_{\mathcal{E}}^2-\int_0^t \langle\Phi,G(u_s,p_s)\Phi \rangle_{\mathcal{E}}\,\dif s
\end{equation}
are $\Pr^\ast$-local martingales. We start introducing some notation and then show in parts (a) and (b) the local martingale properties of the two indicated processes on the canonical space $D(\rr_+,\mathcal{E})$. 
\medskip

As before we use $Z^n_t:=\sqrt{\alpha_n}\,M^n_t$ and $\Delta_t Z^n:=Z^n_t-Z^n_{t-}$. Further, as indicated above the notation is such that we use $\Pr^n$ and $\EX^n$ to denote probabilities and expectations on the original given measurable spaces $(\Omega^n,\sF^n)$ as well as on the canonical space $(D(\rr_+,\mathcal{E}),\mathcal{D})$. That is, e.g., $\EX^n f(Z^n_t)=\EX^n f(\zeta_t)$ for any bounded function $f$, where the former is the expectation taken on the original space $(\Omega^n,\sF^n,\Pr^n)$ and the latter the expectation on the canonical space of c\`adl\`ag processes with respect to the pushforward measure. 
Furthermore, we employ the It{\^o}-formula \cite[Thm.~25.7]{MetivierSemi} for smooth 
functions $f\in C^\infty_c(\rr)$ applied to semi-martingales. For the particular choice of the semi-martingales being the real martingales $\langle\Phi,Z^n_t\rangle_\mathcal{E}$ the It{\^o}-formula reads
\begin{eqnarray}\label{clt_applied_oto_formula}
f\bigl(\langle\Phi,Z^n_t\rangle_{\mathcal{E}}\bigr)&=&\frac{1}{2}\int_0^t f''\bigl(\langle\Phi,Z^n_{s-}\rangle_{\mathcal{E}}\bigr)\,\bigl(\langle\Phi,\alpha_n\!\!\ll\!\!M^n\!\!\gg\!\!_t\Phi\rangle_{\mathcal{E}}\bigr) \dif s \nonumber\\
&&+\ \sum_{s\leq t}\Bigl[f(\langle\Phi,Z^n_s\rangle_\mathcal{E})-f(\langle\Phi,Z^n_{s-}\rangle_\mathcal{E})-\langle\Phi,\Delta_sZ^n\rangle_\mathcal{E}\, f'(\langle\Phi,Z^n_{s-}\rangle_\mathcal{E})\Bigr]\nonumber\\
&& -\frac{1}{2}\sum_{s\leq t}\Bigr[\langle\Phi,\Delta_sZ^n\rangle^s_{\mathcal{E}}\,f''(\langle\Phi,Z^n_t\rangle_\mathcal{E})\Bigl]+ M^{f,n}_t
\end{eqnarray}
where $(M^{f,n}_t)_{t\geq 0}$ is some martingale on $(\Omega^n,\sF^n,(\sF^n_t)_{t\geq 0},\Pr^n)$ depending on $Z^n$ and $f$.

Next, we introduce on the canonical space for all positive $\rho$ the stopping times \mbox{$\tau_\rho:=\inf\{t\in\rr_+\,|\,\|\zeta_t\|_\mathcal{E}>\rho\}$} and note that due to the bound \eqref{clt_martingale_cond_1} in (D3) on the jump heights we have that for any law $\Pr^n,\,n\geq 1$, it holds almost surely
\begin{equation}
\|\zeta_{\tau_\rho}\|_\mathcal{E}\leq \rho+C\,.
\end{equation}
Analogously we define the stopping times $\tau^n_\rho:=\inf\{t\in\rr_+\,|\,\|Z^n_t\|_\mathcal{E}>\rho\}$ on the spaces $(\Omega^n,\sF^n,(\sF^n_t)_{t\geq 0},\Pr^n)$.

Finally, as already mentioned $(\mathcal{D}_t)_{t\geq 0}$ denotes the natural filtration on the canonical space. Then for $A\in\mathcal{D}_t$ we define $A^n:=(Z^n)^{-1} F\in\sF^n_t$ its preimage with respect to the random variable $Z^n$. We now proceed to show that the two processes $\langle\Phi,\zeta_t\rangle_\mathcal{E}$ and \eqref{martingale_problem_proof_2} are indeed local martingales with respect to the limit measure $\Pr^\ast$.\medskip

(a)\quad Let $\Phi\in\mathcal{E}^\ast$ be fixed and we choose for every $\rho$ a smooth function $f_\rho\in C^\infty_c(\rr)$ which satisfies $f_\rho(x)=x$ if $|x|\leq \|\Phi\|_{\mathcal{E}^\ast}(\rho+C)$ and thus $f'(x)=1$ and $f''(x)=0$ for $|x|\leq \|\Phi\|_{\mathcal{E}^\ast}(\rho+C)$. Therefore it holds for $t< \tau^n_\rho$, which implies the estimate $|\langle\Phi,Z^n_{t-}\rangle_\mathcal{E}|\leq \|\Phi\|_{\mathcal{E}^\ast}(\rho+C)$, that
\begin{equation*}
f_\rho''(\langle\Phi,Z^n_{t-}\rangle_\mathcal{E})=0 
\end{equation*}
and
\begin{equation*}
f_\rho(\langle\Phi,Z^n_t\rangle_\mathcal{E})-f_\rho(\langle\Phi,Z^n_{t-}\rangle_\mathcal{E})-\langle\Phi,\Delta_tZ^n\rangle_\mathcal{E}\, f_\rho'(\langle\Phi,Z^n_{s-}\rangle_\mathcal{E}) =0.
\end{equation*}
It follows that applying the It{\^o}-formula \eqref{clt_applied_oto_formula} to the function $f_\rho$ and the martingale $Z^n_{t\wedge \tau^n_\rho}$ all terms besides the martingale $M^{n,f_\rho}$ vanish in the the right hand side. 
Therefore we obtain for $t_2\geq t_1$ and all $A\in\mathcal{D}_{t_1}$ that
\begin{eqnarray}
\EX^n\Bigl[\mathbb{I}_{A}\,\Bigl(\langle\Phi,\zeta_{t_2\wedge\tau_\rho}\rangle_\mathcal{E}-\langle\Phi,\zeta_{t_1\wedge\tau_\rho}\rangle_\mathcal{E}\Bigr)\Bigr]\nonumber \!&\!=\!&\! \EX^n\Bigl[\mathbb{I}_{A}\,\Bigl(f_\rho\bigl(\langle\Phi,\zeta_{t_2\wedge\tau_\rho}\rangle_\mathcal{E}\bigr)-f_\rho\bigl(\langle\Phi,\zeta_{t_1\wedge\tau_\rho}\rangle_\mathcal{E}\bigr)\Bigr)\Bigr]\nonumber\\[1ex]
&&\hspace{-35pt}=\ \, \EX^n\Bigl[\mathbb{I}_{A^n}\,\Bigl(f_\rho\bigl(\langle\Phi,Z^n_{t_2\wedge\tau_\rho^n}\rangle_\mathcal{E}\bigr)-f_\rho\bigl(\langle\Phi,Z^n_{t_1\wedge\tau_\rho^n}\rangle_\mathcal{E}\bigr)\Bigr)\Bigr]\nonumber\\[1ex]
&&\hspace{-35pt} =\ \,0\,.\label{martingale_problem_proof_zero_exp}
\end{eqnarray}
The proof of the first martingale property is concluded as in \cite{Metivier}: The mapping $\zeta\to f_\rho(\langle\Phi,\zeta_{t_2\wedge\tau_\rho}\rangle_\mathcal{E})$ is almost surely (with respect to the probability $\Pr^\ast$) continuous and as $\Pr^n$ converges weakly to $\Pr^\ast$ it holds due to \eqref{martingale_problem_proof_zero_exp} that
\begin{equation*}
\EX^\ast\Bigl[\mathbb{I}_{A}\,\Bigl(\langle\Phi,\zeta_{t_2\wedge\tau_\rho}\rangle_\mathcal{E}-\langle\Phi,\zeta_{t_1\wedge\tau_\rho}\rangle_\mathcal{E}\Bigr)\Bigr]=0\,. 
\end{equation*}
Here we have employed a weaker version of the continuous mapping theorem, see, e.g., \cite[Thm.~2.7]{Billingsley}\,.

We infer from the definition of the conditional expectation that the stopped processes are martingales. Furthermore, as $\zeta_t$ possesses continuous paths almost surely under the measure $\Pr^\ast$ it holds that $\tau_\rho$ diverges to $\infty$ almost surely for $\rho\to\infty$. Hence, we can find a sequence of stopping times $\tau_{\rho_k}$, $k\in\mathbb{N}$, such that $\tau_{\rho_k}\to\infty$ almost surely for $k\to\infty$. 
Thus, the process $\langle\Phi,\zeta_t\rangle$ is a local martingale with respect to $\Pr^\ast$.\medskip

(b)\quad For the second class of processes we consider smooth functions $g_\rho\in C^\infty_c(\rr)$ such that $g_\rho(x)=x^2$ for all $|x|\leq \|\Phi\|_{\mathcal{E}^\ast}(\rho+C)$. Starting from the definition of the conditional expectation as in \eqref{martingale_problem_proof_zero_exp} we obtain 
\begin{eqnarray*}
\lefteqn{\EX^n\Bigl[\mathbb{I}_A\,\Bigl(\langle\Phi,\zeta_{t_2\wedge\tau_\rho}\rangle_\mathcal{E}^2-\int_0^{t_2\wedge\tau_\rho} \langle\Phi,G(u(s),p(s))\Phi\rangle_\mathcal{E}\,\dif s-\langle\Phi,\zeta_{t_1\wedge\tau_\rho}\rangle_\mathcal{E}^2}\\
&&\phantom{xxxxxxxxxxxXxxxxxxxxxxxxxxxxxxxxxx}\mbox{} + \int_0^{t_1\wedge\tau_\rho} \langle\Phi,G(u(s),p(s))\Phi\rangle_\mathcal{E}\,\dif s\Bigr)\Bigr]\\
&&\phantom{xx}=\
\EX^n\Bigl[\mathbb{I}_A\,\Bigl(\langle\Phi,\zeta_{t_2\wedge\tau_\rho}\rangle_\mathcal{E}^2-\langle\Phi,\zeta_{t_1\wedge\tau_\rho}\rangle_\mathcal{E}^2-\int_{t_1\wedge \tau_\rho}^{t_2\wedge\tau_\rho} \langle\Phi,G(u(s),p(s))\Phi\rangle_\mathcal{E}\,\dif s\Bigr)\Bigr]\\
&&\phantom{xx}=\ \EX^n\Bigl[\mathbb{I}_{A^n}\,\Bigl(\langle\Phi,Z^n_{t_2\wedge\tau_\rho}\rangle_\mathcal{E}^2-\langle\Phi,Z^n_{t_1\wedge\tau_\rho}\rangle_\mathcal{E}^2-\int_{t_1\wedge \tau_\rho}^{t_2\wedge\tau_\rho} \alpha_n\,\langle\Phi,G^n(Y^n_s,\theta^n_s)\Phi\rangle_\mathcal{E}\,\dif s \Bigr)\Bigr]\\
&&\phantom{xxxxx =\ }\mbox{}+ \EX^n\Bigl[\mathbb{I}_{A^n}\,\Bigl(\int_{t_1\wedge \tau_\rho}^{t_2\wedge\tau_\rho} \alpha_n\,\langle\Phi,G^n(Y^n_s,\theta^n_s)\Phi\rangle_\mathcal{E}- \langle\Phi,G^n(u(s),p(s))\Phi\rangle_\mathcal{E}\,\dif s\Bigr)\Bigr]\,.
\end{eqnarray*}
Here the first expectation in the final right hand side vanishes due to the It{\^o}-formula \eqref{clt_applied_oto_formula}: We apply the It{\^o}-formula for the function $g_\rho$ and the martingales $Z^n_{t\wedge\tau^n_\rho}$ to the terms $\langle\Phi,Z^n_{t_2\wedge\tau_\rho}\rangle_\mathcal{E}^2$ and $\langle\Phi,Z^n_{t_1\wedge\tau_\rho}\rangle_\mathcal{E}^2$. Then we find -- similarly to part (a) -- that the summands in the right hand side of the It{\^o}-formula vanish. Therefore we are left with only the martingale $M^{n,g_\rho}$ and the integral term, wherein $g_\rho''(\langle\phi,Z^n_{t-} \rangle_\mathcal{E})=2$ for all $t<\tau^n_\rho$. The martingale term vanishes due to the martingale property and the remaining integral is cancelled by the integral in the above expectation. Overall this shows that the first expectation vanishes.

Next we take the absolute value on both sides of the above equality and obtain, estimating the second expectation and extending the integration interval to $[0,T]$, the inequality
\begin{eqnarray*}
\lefteqn{\Big|\EX^n\Bigl[\mathbb{I}_A\,\Bigl(\langle\Phi,\zeta_{t_2\wedge\tau_\rho}\rangle_\mathcal{E}^2-\int_0^{t_2\wedge\tau_\rho}\! \langle\Phi,G(u(s),p(s))\Phi\rangle_\mathcal{E}\,\dif s-\langle\Phi,\zeta_{t_1\wedge\tau_\rho}\rangle_\mathcal{E}^2}\\
&&\phantom{xxxxxxxxxxxXxxxxxxxxxxxxxxxxxxxxxx}\mbox{}+ \int_0^{t_1\wedge\tau_\rho}\! \langle\Phi,G(u(s),p(s))\Phi\rangle_\mathcal{E}\,\dif s\Bigr)\Bigr]\Big|\\
&&\phantom{xxxxxxxxxxxxxxx}\leq\ \int_0^T \EX^n\Big|\alpha_n\,\langle\Phi,G^n(Y^n_s,\theta^n_s)\Phi\rangle_\mathcal{E}- \langle\Phi,G(u(s),p(s))\Phi\rangle_\mathcal{E}\Big|\,\dif s\,.
\end{eqnarray*}
The convergence of the upper bound to zero for $n\to\infty$ follows by assumption \eqref{clt_martingale_cond_2}. Hence we have proven an analogous result to \eqref{martingale_problem_proof_zero_exp} in part (a). The same line of argument that concluded part (a) also concludes part (b). The proof is completed.



\section{Application to models of excitable membranes}\label{section_application}

The primary motivation for the present work stems from the study of stochastic version of the Hodgkin-Huxley model \cite{HodgkinHuxley52} describing action potential generation and propagation in spatially extended neurons in a PDMP formulation. This model is analogous in structure to hybrid models that are used for the modelling of Calcium dynamics, cf.~\cite{Falcke,Koch,Swillens}, or models of cardiac tissue, cf.~ \cite{CardiacModel}. 
Therefore, we consider as an example of the application of the presented limit theorems 
%
%
%
a general \emph{compartmental-type hybrid stochastic model} for spatially extended excitable membranes introduced in \cite[Sec.~3.2]{RiedlerPhD} which subsumises the above mentioned applications. 
(Another example for the application of Theorem \ref{LLN} is the law of large numbers that is presented in \cite{Austin} for a particular one-dimensional hybrid model.) We refrain from discussing the physiological derivations of this type of model and the implications and interpretations of the limit theorems in this setting. These aspects will be subject to a forthcoming publication. 
%
%
%
%
%
%
%
%
%
%
%
%
We now fix some notation for the remainder of the section. The set $D\subset\rr^d$ denotes bounded spatial domain with the physically reasonable dimensions $d\leq 3$. 
That is, the set $D$ is a bounded interval when $d=1$ and when $d\in\{2,\,3\}$ we assume it possesses a $C^3$--boundary. Further, for a given dimension $d$, let $s$ denote the smallest integer such that $s>d/2$. 
Finally, let $m\in\mathbb{N}$ be the fixed number of states ion channels can be in.

\subsection{Deterministic limit system}

The deterministic limit is the solution to the membrane equation
\begin{equation}\label{limit_theorem_apl_det_cable_eq}
 \dot u=\sum_{i,j=1}^d a_{ij}(x)u_{x_ix_j}+\sum_{i=1}^m g_i(x)\, p_i\,(E_i-u)
\end{equation}
with $p_j,\,j=1\ldots,m$ given by solutions of the coupled equations
\begin{equation}\label{limit_theorem_det_gating_sys}
\dot p_j \,=\, F_j(p,u)\,:=\,\sum_{i\neq j} q_{ij}(u)\, p_i - q_{ij}(u)\, p_j\,.
\end{equation}
%
%
%
%
%
We choose Dirichlet boundary conditions for the component $u$, i.e., $u(t,x)=0$ for all $x\in\partial D$ and all $t\in[0,T]$, which, however, is of no particular importance for the considerations that follow and can be readily changed. 
Here the coefficient functions $a_{ij}$ and $g_i$ are smooth on $\nbar D$, with $g_i$ non-negative, and the differential operator is strongly elliptic. Further, the rate functions $q_{ij}$ are sufficiently smooth.\footnote{In detail the conditions are \cite[Sec.~3.3.1]{RiedlerPhD}: The functions $q_{ij}$ are bounded and bounded away from zero on the interval $[\nbar u_-,\nbar u_+]$. Further, on this interval they satisfy a Lipschitz and polynomial growth condition and are twice continuously differentiable with bounded derivatives.} 
Finally, the initial conditions satisfy $u_0\in H^1_0(D)\cap H^s(D)$ and $p_i(0)\in H^s(D)$ and, in addition, the pointwise bounds $u(0,x)\in [\nbar u_-,\nbar u_+]$ and $p_i(0,x)\in [0,1]$, $i=1,\ldots,m$, such that $\sum_{i=1}^m p_i(0,x)=1$, hold for all $x\in D$. 
Then, the deterministic system \eqref{limit_theorem_apl_det_cable_eq}, \eqref{limit_theorem_det_gating_sys} is well-posed, that is, there exists a unique global solution depending continuously on the initial condition, which also satisfies \eqref{det_limit_int_equation} \cite[Sec.~3.3.1]{RiedlerPhD}. In particular, the solution $(u(t),p(t))_{t\in[0,T]}$ is in $C([0,T], H^s(D))$ componentwise for every $T>0$ and pointwise bounded, i.e., $u(t,x)\in [\nbar u_-,\nbar u_+]$ and $p_i(t,x)\in[0,1]$ for all $(t,x)\in [0,T]\times \nbar D$ and all $i=1,\ldots,m$. 

\subsection{Compartmental-type membrane models}\label{subsection_compartmental_type_models}

We briefly summarise the essential features of PDMPs $(U^n_t,\Theta^n_t)_{t\geq 0}$, $n\in\mathbb{N}$, constituting compartmental-type membrane models. 

Firstly, an integral component of the sequence of models is a sequence of compartmentalisation of the spatial domain $D$. Thus, for each $n\in\mathbb{N}$ let $\mathcal{P}_n$ be a \emph{convex} partition of the domain $D$, i.e., $\mathcal{P}_n$ is a finite collection of mutually disjoint convex\footnote{The convexity of the compartments is a technical assumption which allows to employ Poincar\`e's inequality in the proof of the limit theorems with a known optimal Poincar\'e constant \cite{Acosta,Payne}.} subsets of $D$, called \emph{compartments}, such that their union equals $D$. 

The second fundamental aspect is the channel distribution across the compartments yielding the stochastic jump dynamics and the coordinate functions $z^n$. We assume that each compartment either does not contain channels or a fixed deterministic number. Let $p(n)$ denote the number of non-empty compartments of the $n$th model denoted by $D_{1,n},\ldots, D_{p(n),n}$ and $l(k,n)$ be for $k\leq p(n)$ the total number of channels in the $k$th non-empty compartment of the $n$th model. 
%
%
%
Then the piecewise constant components of the PDMPs are given by $mp(n)$-dimensional vectors $\Theta^n_t=(\Theta_i^{k,n}(t))_{i=1,\ldots,m,\,k=1,\ldots,p(n)}$ with finite state spaces $K_n$. Each component $\Theta_i^{k,n}(t)$ counts the number of channels located in the domain $D_{k,n}$ which are in state $i$ \mbox{at time $t$.} and it holds that
\begin{equation*}
\sum_{i=1}^m\Theta^{k,n}_i(t) = l(k,n)\,.
\end{equation*}
as channels can neither be destroyed nor created. %
We proceed to define the stochastic jump dynamics. As two channel switching do not occur simultaneously, the only jumps in the configuration $\theta^n\in K_n$ with non-zero probability are transitions concerning one single channel. That is, events for which in one particular compartment one particular channel changes its state. Let $q_{ij}:\rr\to\rr_+$ denote the $u$-dependent instantaneous rate of one channel switching from state $i$ to $j$. Then given a specific configuration $\theta^n\in K_n$ the rate that one channel in compartment $D_{k,n}$ switches from state $i$ to $j$ is given by
\begin{equation}\label{limits_compartments_one_subdomain_rate}
\theta^{k,n}_i\,Q_{ij}^{k,n}(u)\in\rr_+\,,
\end{equation}
where $Q^n_{ij}(u)$ is a functional of the membrane variable $u\in L^2(D)$ defined as
\begin{equation*}
Q_{ij}^{k,n}(u):=q_{ij}\Bigl(\frac{1}{|D_{k,n}|}\int_{D_{k,n}} u(x)\,\dif x\Bigr)\,.
\end{equation*}
That is, $Q^{k,n}_{ij}(u)$ is the instantaneous rate $q_{ij}$ evaluated at the average value of the membrane variable over the compartment $D_{k,n}$. Hence the rate \eqref{limits_compartments_one_subdomain_rate} is the number of channels in state $i$ in domain $D_{k,n}$ times the rate of one channel switching from $i$ to $j$. This definition yields by summing over all events the total instantaneous rate
\begin{equation}\label{arnold_approach_total_rate}
\Lambda^n(u,\theta^n)\,:=\,\sum_{i,j=1}^{m} \sum_{k=1}^{p(n)}\theta^{k,n}_i\,Q^{k,n}_{ij}(u)\,.
\end{equation}
Note that for each $n$ the total instantaneous rate is bounded and as expected proportional to the total number of channels which implies that the PDMPs are regular. 
Finally, we define on the set $K_n$ for $i=1,\ldots,m$ the coordinate functions
\begin{equation}\label{Arnold_coord_fct}
z^n_i(\theta^n):=\sum_{k=1}^{p(n)} \frac{\theta_i^{k,n}}{l(k,n)}\,\mathbb{I}_{\,D_{k,n}}\,\in\, L^2(D)\,.
\end{equation}
The coordinate process $z^n(\Theta^n_t)$ is c\`adl\`ag with each component taking values in $L^2(D)$. Clearly, the coordinate process is zero on those compartments which do not contain channels. Moreover, each $z_i^n(\Theta^n_t)$ is for every $t\geq 0$ a piecewise constant function on the spatial domain $D$ which takes values in $[0,1]$. 

Thirdly, the family of abstract evolution equations \eqref{abstract_ODE} defining the dynamics of the PDMP's continuous component $U^n$ are given by the parabolic, linear, inhomogeneous second order partial differential equations
\begin{equation}\label{limit_theorem_apl_stoch_cable}
 \dot u=\sum_{i,j=1}^d a_{ij}(x)u_{x_ix_j} +\sum_{i=1}^m g_i(x)\, z^n_i(\theta^n)\,(E_i-u)\,.
\end{equation}
Consistently with the deterministic limit system we equip equation \eqref{limit_theorem_apl_stoch_cable} with Dirichlet boundary conditions. Finally, we define the operators $A,\,B$ depending on $\theta^n$ only via suitable coordinate functions, cf.~\eqref{uniform_operators}, by
\begin{equation}\label{limit_theorem_apl_pdmp_op}
A(z^n(\theta^n))\,u:=\displaystyle\sum_{i,j=1}^d a_{ij}(x)u_{x_ix_j}\,,\qquad 
B(z^n(\theta^n),u):= \displaystyle\sum_{i=1}^m g_i(x)\,z^n_i(\theta^n)\,(E_i-u)\,.
\end{equation}

To conclude, it is easy to see that the characteristics defined via the individual rates \eqref{limits_compartments_one_subdomain_rate}, the total jump rate \eqref{arnold_approach_total_rate} and the evolution equation \eqref{limit_theorem_apl_pdmp_op} define a sequence of $L^2(D)\times K_n$--valued infinite-dimensional PDMPs $(U^n_t,\Theta^n_t)_{t\geq 0}$. Moreover, the membrane  component $(U^n_t)_{t\geq 0}$ is almost everywhere pointwise bounded, i.e., $U^n_t(x)\in [\nbar u_-,\nbar u_+]$ for almost all $x\in D$ and all $t\geq 0$, where $\nbar u_-:=\min_i E_i\leq 0$ and $\nbar u_+:=\max_i E_i\geq 0$, for initial conditions $U^n_0$ satisfying these bounds, cf.~\cite[Sec.~3.2]{RiedlerPhD}, which we always assume.




\subsection{Limit theorems for compartmental-type models}

Applying the limit theorems derived in Sections \ref{Sec_limitThm} and \ref{section_the_CLTs} to compartmental models we find that the conditions therein translate into assumptions on the behaviour of the sequence of partitions $\pindex_n$ and the number of ion channels in the membrane, see Appendix \ref{section_9_proof_compart_model}.
%
%
Thus, we denote by $\delta(n)$ the maximal diameter of the non-empty compartments in the $n$th model, i.e.,
\begin{equation*}
\delta_+(n):=\max_{k=1,\ldots,p(n)} \textnormal{diam}(D_{k,n})\,,
\end{equation*}
and by $\ell_+(n)$ and $\ell_-$ the maximal and minimal number of channels in non-empty compartments, i.e.,
\begin{equation*}
\ell_+(n):=\max_{k=1,\ldots,p(n)} l(k,n),\qquad \ell_-(n):=\min_{k=1,\ldots,p(n)} l(k,n)\,.
\end{equation*}

Then the law of large numbers takes the following form.

\begin{thm}\label{LLN_compartmental_models} Assume that the sequence of partitions satisfies that
\begin{equation}\label{LLN_compartmental_model_part_conds}
\lim_{n\to\infty}\delta_+(n)=0,\qquad\quad \lim_{n\to\infty} \ell_-(n)=\infty, 
\end{equation}
and that the initial conditions $(U^n_0,z^n(\Theta^n_0))$ converge in probability to $(u_0,p_0)$ in the space $L^2(D)^{m+1}$. Then the compartmental-type models converge in probability to the deterministic solution of the excitable media system \eqref{limit_theorem_apl_det_cable_eq}, \eqref{limit_theorem_det_gating_sys} in the sense that it holds for all $\eps>0$ that
\begin{equation}\label{example_LLN_conv_in_prob}
\lim_{n\to\infty}\Pr\Bigl[\sup\nolimits_{t\in[0,T]}\|U^n_t-u(t)\|_{L^2}+\sum_{i=1}^m\sup\nolimits_{t\in[0,T]}\|z^n_i(\Theta^n_t)-p(t)\|_{L^2}>\eps\,\Bigr]\,=\, 0\,.
\end{equation}
Moreover, the convergence also holds in the mean in the space $L^2((0,T),L^2(D))$, i.e.,
\begin{equation}\label{example_LLN_conv_in_mean}
\lim_{n\to\infty}\EX^n\Bigl[\|U^n_t-u(t)\|_{L^2((0,T),L^2)}+\sum_{i=1}^m\|z^n_i(\Theta^n_t)-p(t)\|_{L^2((0,T),L^2)}\Bigr]\,=\,0\,. 
\end{equation}
\end{thm}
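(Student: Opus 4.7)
The plan is to derive Theorem \ref{LLN_compartmental_models} by invoking the general law of large numbers Theorem \ref{LLN} with the choices $H = E = L^2(D)$ and $X = H^1_0(D)$, so that the abstract evolution equations \eqref{limit_theorem_apl_stoch_cable} and \eqref{limit_theorem_apl_det_cable_eq} fit into the framework; the operators $A, B$ from \eqref{limit_theorem_apl_pdmp_op} manifestly depend on $\theta^n$ only through the coordinate functions $z^n_i$, as required in \eqref{uniform_operators}. I would then strengthen the resulting convergence in probability to convergence in the mean via dominated convergence. Throughout I would invoke Remark \ref{weaker_conditions} so that the one-sided Lipschitz conditions \eqref{onesided_lip} and \eqref{det_sys_lipschitz} only need to hold on pairings where $U^n$ and $u$ are pointwise bounded in $[\bar u_-, \bar u_+]$ and each $z^n_i, p_i$ is pointwise bounded in $[0,1]$; coercivity of the uniformly elliptic part handles $\langle A(q)v-A(p)u, v-u\rangle_X$, while the bilinear inhomogeneity $B$ satisfies a pointwise Lipschitz bound on this restricted set since $g_i$ is bounded on $\nbar D$.

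Next I would verify (C1) by checking the sufficient condition \eqref{LLN_martingale_bound_first_lemma_2} from Lemma \ref{martingale_bound_1}. A single channel switch in compartment $D_{k,n}$ alters $z^n_i(\theta^n)$ by at most $\pm \mathbb{I}_{D_{k,n}}/l(k,n)$, hence its squared $L^2$-norm is bounded by $|D_{k,n}|/l(k,n)^2$. Combined with the rates in \eqref{arnold_approach_total_rate}, which within each compartment contribute at most a bounded constant times $l(k,n)$, the integrand is bounded above by a constant multiple of $\sum_{k=1}^{p(n)} |D_{k,n}|/l(k,n) \leq |D|/\ell_-(n)$, which vanishes by the second assumption in \eqref{LLN_compartmental_model_part_conds}.

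Condition (C2) is the subtlest step and where I expect the main obstacle. A direct computation of $[\mathcal{A}^n\langle\,\cdot\,,z^n_j(\cdot)\rangle_{L^2}](u,\theta^n)$ using \eqref{limits_compartments_one_subdomain_rate} produces a function which on each compartment $D_{k,n}$ agrees with the pointwise expression for $F_j(z^n(\theta^n), u)$ in \eqref{limit_theorem_det_gating_sys}, except that $u(x)$ is replaced by the compartment average $\bar u_{k,n} = |D_{k,n}|^{-1}\int_{D_{k,n}} u\,\dif x$. The error therefore takes the form
\begin{equation*}
\sum_{k=1}^{p(n)} \mathbb{I}_{D_{k,n}} \sum_{i} \bigl(q_{ij}(\bar u_{k,n}) - q_{ij}(u(\cdot))\bigr)\, z^n_i(\theta^n),
\end{equation*}
and since $q_{ij}$ is Lipschitz on $[\bar u_-, \bar u_+]$, Poincar\'e's inequality on the convex compartments (whose optimal constant is controlled by $\delta_+(n)$) bounds the $L^2$-norm on $D_{k,n}$ by a constant multiple of $\delta_+(n)\|\nabla u\|_{L^2(D_{k,n})}$. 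Summing over $k$ and integrating in time, the total error is dominated by $C\,\delta_+(n)\,\|u\|_{L^2((0,T),H^1_0)}$, which tends to zero by the first assumption in \eqref{LLN_compartmental_model_part_conds}; the same analysis applied to $U^n_s$ instead of $u(s)$ yields the corresponding estimate along the stochastic paths since $U^n$ satisfies the same $H^1$-energy estimate uniformly in $n$ on account of the pointwise bound. Condition (C3) is the hypothesis on initial conditions, so Theorem \ref{LLN} yields \eqref{example_LLN_conv_in_prob}.

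Finally, to upgrade to convergence in the mean \eqref{example_LLN_conv_in_mean}, I would observe that the differences $U^n_t(x)-u(t,x)$ and $z^n_i(\Theta^n_t)(x)-p_i(t,x)$ are pointwise uniformly bounded in $n$, $t$, $x$, with bounds $\bar u_+-\bar u_-$ and $1$ respectively. Consequently the random variables whose expectations appear in \eqref{example_LLN_conv_in_mean} are dominated by a deterministic constant multiple of $\sqrt{T|D|}$, and the convergence in probability \eqref{example_LLN_conv_in_prob} together with the bounded convergence theorem implies the convergence of the expectations.
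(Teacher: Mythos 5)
Your proposal is correct and follows essentially the same route as the paper's proof in Appendix \ref{section_9_proof_compart_model}: the same choice of spaces $X=H^1_0(D)$, $H=E=L^2(D)$, the same use of pointwise bounds to obtain the Lipschitz conditions, the same $O(\ell_-(n)^{-1})$ bound verifying \eqref{LLN_martingale_bound_first_lemma_2}, the same Poincar\'e-on-convex-compartments argument combined with a uniform-in-$n$ bound on $\|U^n\|_{L^2((0,T),H^1)}$ for condition (C2), and the same bounded-convergence upgrade to convergence in the mean. No gaps.
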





Next we present the appropriate quadratic variation process for the martingale central limit theorem. For the definition of the limiting diffusion we consider for $u,p_i\in C(\nbar D)$ the bilinear form
\begin{equation}\label{def_example_cov_operator}
\hspace{-15pt}\left.\begin{array}{rcl}
(\Psi,\Phi)\mapsto\bigl( G(u,p)\,\Psi,\Phi\bigr)_{L^2}&=&
\displaystyle\sum_{j=1}^m\,\sum_{i\neq j}\int_D p_i(\vz)\,q_{ij}(u(\vz))\, \psi_j(\vz)\,\phi_j(\vz)\,\dif\vz\\
&&\mbox{}+\displaystyle\sum_{j=1}^m\,\sum_{i\neq j}\int_D p_j(\vz)\, q_{ji}(u(\vz))\,\psi_j(\vz)\,\phi_j(\vz)\,\dif\vz\\
&&\mbox{}-\displaystyle\sum_{j=1}^m\,\sum_{i\neq j}\int_D p_j(\vz)\, q_{ji}(u(\vz))\,\psi_i(\vz)\,\phi_j(\vz)\,\dif \vz\\
&&\mbox{}-\displaystyle\sum_{j=1}^m\,\sum_{i\neq j}\int_D p_i(\vz)\, q_{ij}(u(\vz))\,\psi_i(\vz)\,\phi_j(\vz)\,\dif \vz\,.
\end{array}\right.
\end{equation}
Note that the right hand side is finite for all $\phi_i,\psi_i\in L^2(D)$ as $p_i$ and $q_{ij}(u)$ are bounded functions. Hence, for every given $\Psi\in L^2(D)^m$ the mapping $\Phi\mapsto (G(u,p)\Psi,\Phi)$ is a linear, bounded functional on $L^2(D)^m$ and, conversely, for every given $\Phi\in L^2(D)^m$ the mapping $\Psi\mapsto (G(u,p)\Psi,\Phi)$ is a linear, bounded functional on $L^2(D)^m$.\medskip

\begin{prop}\label{compartmental_example_cov_operators} The operator $G(u,p)$ defined via \eqref{def_example_cov_operator} is for $s>d/2$ a trace class operator mapping $H^s(D)$ into its dual $H^{-s}(D)$. Moreover, the operator-valued map $t\mapsto G(u(t),p(t))$ defines a unique centred diffusion process on $H^{-s}(D)$.
\end{prop}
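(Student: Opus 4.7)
The plan is to establish the two assertions in sequence: first the trace-class property, then the diffusion. The key algebraic observation is that the bilinear form \eqref{def_example_cov_operator} admits the symmetric representation
\begin{equation*}
\bigl(G(u,p)\,\Psi,\Psi\bigr)_{L^2}\,=\,\int_D\sum_{i\neq j}p_i(x)\,q_{ij}(u(x))\,\bigl(\psi_j(x)-\psi_i(x)\bigr)^2\,\dif x\,,
\end{equation*}
obtained by expanding the square and re-indexing the summation over ordered pairs $i\neq j$. This parallels the pointwise covariance structure of a continuous-time Markov chain with rates $q_{ij}$ and occupancies $p_i$, and has three immediate consequences: the quadratic form is non-negative, the bilinear form is symmetric after polarisation, and it is bounded on $L^2(D)^m\times L^2(D)^m$ because $p_i\in[0,1]$ and $q_{ij}(u)$ are both essentially bounded along the deterministic trajectory. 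Hence $G(u,p)$ extends to a bounded, symmetric, non-negative operator on $L^2(D)^m$.

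For the trace-class upgrade, I exploit the Hilbert--Schmidt character of the Sobolev embedding at the threshold $s>d/2$. Taking $L^2$-orthonormal eigenfunctions $e_k$ of $I-\Delta$ on $D$ with eigenvalues $\mu_k$, Weyl's law on the bounded $C^3$-domain yields $\mu_k\sim c\,k^{2/d}$, and the family $\varphi_k:=\mu_k^{-s/2}e_k$ is an orthonormal basis of $H^s(D)$ satisfying $\sum_k\|\varphi_k\|_{L^2}^2=\sum_k\mu_k^{-s}<\infty$ precisely when $s>d/2$. Thus the embedding $\iota:H^s(D)\hookrightarrow L^2(D)$ is Hilbert--Schmidt, and by duality so is its adjoint $\iota^\ast:L^2(D)\hookrightarrow H^{-s}(D)$. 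Factorising $G(u,p)$ componentwise as
\begin{equation*}
H^s(D)^m\ \xrightarrow{\iota}\ L^2(D)^m\ \xrightarrow{G(u,p)}\ L^2(D)^m\ \xrightarrow{\iota^\ast}\ H^{-s}(D)^m\,,
\end{equation*}
we obtain a Hilbert--Schmidt operator followed by a bounded operator followed by a Hilbert--Schmidt operator, and such a product is trace class. Consequently $G(u,p)\in L_1(H^s(D)^m,H^{-s}(D)^m)$ with trace controlled by the uniform bounds on $p_i,\,q_{ij}(u)$ and by $\sum_k\mu_k^{-s}$.

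For the diffusion process, I invoke the It{\^o} theorem cited after \eqref{Kurtz_uniform_boundedness_of_operator}: every family $C^\ast(t)\in L_1(\mathcal{E},\mathcal{E})$ of symmetric, non-negative trace-class operators which is continuous and increasing in $t$ defines a unique centred $\mathcal{E}$--valued diffusion. With $\mathcal{E}=H^{-s}(D)^m$ the required family is obtained from $C(t):=\int_0^tG(u(s),p(s))\,\dif s$ via the canonical Riesz identification. The integrand is symmetric, non-negative and trace-class by the previous paragraph; continuity of $t\mapsto C^\ast(t)$ in trace norm follows from the estimate
\begin{equation*}
\|C^\ast(t+h)-C^\ast(t)\|_1\,\leq\,\int_t^{t+h}\|G(u(s),p(s))\|_1\,\dif s\,,
\end{equation*}
combined with the uniform bounds on $u(s)$ and $p(s)$ along the deterministic limit; monotonicity in the trace-class order is immediate from non-negativity of the integrand.

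The symmetry and non-negativity are nothing more than a rearrangement of indices, and the existence of the diffusion is essentially a direct quotation of It{\^o}'s theorem once the trace-class property is in place. The decisive technical step, and therefore the main obstacle, is the trace-class assertion, which rests entirely on the Hilbert--Schmidt factorisation through $L^2(D)$; this is precisely why the threshold condition $s>d/2$ enters, since it is equivalent via Weyl's law to summability of $\mu_k^{-s}$, and any weaker exponent would fail.
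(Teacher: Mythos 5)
Your proposal is correct and follows the same route the paper takes: the paper's proof merely reduces the claim to verifying that $G(u(t),p(t))$ is self-adjoint, positive and of trace class (citing It\^o's theorem from \cite{Kotelenez1}) and defers the verification to \cite{RiedlerPhD}, and your argument supplies exactly those verifications -- the index-rearrangement giving $\bigl(G(u,p)\Psi,\Psi\bigr)_{L^2}=\int_D\sum_{i\neq j}p_i\,q_{ij}(u)(\psi_j-\psi_i)^2\,\dif x$ for positivity and symmetry, and the Hilbert--Schmidt factorisation through $L^2(D)$ via Maurin's theorem (which the paper itself invokes in the CLT proof) for the trace-class property. The only cosmetic caveat is that ``symmetric after polarisation'' should really be a direct index-swap check of the bilinear form \eqref{def_example_cov_operator} (polarisation alone only recovers the symmetric part), but that check succeeds, so nothing is missing.
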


\begin{proof} As stated in \cite{Kotelenez1} it is sufficient for the statement of the proposition that the operator $G(u(t),p(t))$ is self-adjoint, positive and of trace class. These properties are easily verified and for a detailed proof we refer to \cite{RiedlerPhD}.\end{proof}

In order to state the conditions on the partitions in the central limit theorem we define $\nu_+(n)$ and $\nu_-(n)$ to be the maximum and minimum Lebesgue measure of non-empty compartments, i.e.,
\begin{equation*}
\nu_+(n):=\max_{k=1,\ldots,p(n)}\, |D_{k,n}|\,,\qquad \nu_-(n):=\min_{k=1,\ldots,p(n)}\, |D_{k,n}|\,.
\end{equation*}
Finally, note that in the following the coordinate functions $z^n$ are considered as maps from $K_n$ into the space $H^{-2s}(D)$.

\begin{thm}\label{CLT_compartmental_model} Let $s$ be the smallest integer such that $s>d/2$. If in addition to \eqref{LLN_compartmental_model_part_conds} and the convergence of the initial conditions the sequence of partitions satisfies
\begin{equation}\label{example_CLT_add_condition}
\lim_{n\to\infty}\frac{\ell_-(n)\,\nu_-(n)}{\ell_+(n)\,\nu_+(n)} = 1\,,
\end{equation}
then the sequence of $H^{-2s}(D)$--valued martingales $\Bigl(\sqrt{\frac{\ell_-(n)}{\nu_+(n)}}\,M^n_t\Bigr)_{t\geq 0}$ converges weakly to the $(H(D)^{-2s})^m$--valued diffusion defined by \eqref{def_example_cov_operator}.
\end{thm}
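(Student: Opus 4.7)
The plan is to verify the hypotheses (D1)--(D3) of Theorem~\ref{clt_by_local_martingale} with rescaling $\alpha_n=\ell_-(n)/\nu_+(n)$ and ambient Hilbert space $\mathcal{E}=H^{-2s}(D)^m$. Since each jump of the PDMP moves a single channel in a single compartment $D_{l,n}$ from some state $i$ to some state $j\neq i$, only $z^n_i$ and $z^n_j$ change, each by $\mp\mathbb{I}_{D_{l,n}}/l(l,n)$; combining this with \eqref{quadratic_rep_of_Gn} and the rates \eqref{limits_compartments_one_subdomain_rate} yields
\begin{equation*}
\bigl\langle\Phi,G^n(u,\theta^n)\Phi\bigr\rangle_{\mathcal{E}}\,=\,\sum_{l=1}^{p(n)}\sum_{i\neq j}\frac{\theta^{l,n}_i\,Q^{l,n}_{ij}(u)}{l(l,n)^2}\,\Bigl(\int_{D_{l,n}}\bigl(\phi_j(\vz)-\phi_i(\vz)\bigr)\,\dif\vz\Bigr)^2,
\end{equation*}
where the pairings between $\phi_i\in H^{2s}(D)$ and $\mathbb{I}_{D_{l,n}}\in L^2(D)\subset H^{-2s}(D)$ are $L^2$ integrals.

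\textbf{Tightness and the jump bound.} The Sobolev embedding $H^{2s}(D)\hookrightarrow C(\nbar D)$, valid for $s>d/2$, gives $\|\mathbb{I}_{D_{l,n}}\|_{H^{-2s}}\leq C\,|D_{l,n}|$. Hence $\sqrt{\alpha_n}\,\|z^n(\xi)-z^n(\theta^n)\|_{\mathcal{E}}\leq C\sqrt{\nu_+(n)/\ell_-(n)}\to 0$, which establishes \eqref{clt_martingale_cond_1} and, by the same estimate applied to the integrand, condition (D2). For the tightness condition \eqref{alternative_tightness_cond} I would take any orthonormal basis $(\varphi_k)_{k\in\mathbb{N}}$ of $\mathcal{E}^\ast=H^{2s}(D)^m$; since $s>d/2$, Weyl's asymptotics for the Laplacian make the inclusion $H^{2s}(D)\hookrightarrow L^2(D)$ Hilbert--Schmidt, so the sequence $\|\varphi_k\|^2_{L^2}$ is summable. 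Cauchy--Schwarz on each $D_{l,n}$ combined with \eqref{example_CLT_add_condition} (which forces $\ell_-(n)|D_{l,n}|/(\nu_+(n)\,l(l,n))\leq 1$ and $\to 1$) gives
\begin{equation*}
\alpha_n\,\langle\varphi_k,G^n(u,\theta^n)\varphi_k\rangle_{\mathcal{E}}\,\leq\,C\,\|\varphi_k\|^2_{L^2}
\end{equation*}
uniformly in $n$ and $(u,\theta^n)$, so \eqref{alternative_tightness_cond} holds with $\gamma_k=C\|\varphi_k\|^2_{L^2}$ and $C(t)=Ct$; the bound \eqref{first_clt_section_theorem_2nd_mom_cond} follows by summing this estimate over $k$.

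\textbf{Convergence of the quadratic variation and main obstacle.} The central step is to prove, for every $\Phi\in\mathcal{E}^\ast$,
\begin{equation*}
\lim_{n\to\infty}\int_0^t\EX^n\bigl|\alpha_n\,\langle\Phi,G^n(U^n_s,\Theta^n_s)\Phi\rangle_{\mathcal{E}}-\langle\Phi,G(u(s),p(s))\Phi\rangle_{\mathcal{E}}\bigr|\,\dif s\,=\,0.
\end{equation*}
I would proceed in two substitutions. First, using Theorem~\ref{LLN_compartmental_models} together with the uniform pointwise bounds on the stochastic and deterministic components and the Lipschitz continuity of $q_{ij}$, replace $(U^n_s,z^n(\Theta^n_s))$ by $(u(s),p(s))$ in the expression for $\alpha_n G^n$, with an $L^1$ error vanishing as $n\to\infty$ by bounded convergence. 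Second, using $\theta^{l,n}_i/l(l,n)=p^n_i|_{D_{l,n}}$, regroup factors as
\begin{equation*}
\alpha_n\,\frac{\theta^{l,n}_i}{l(l,n)^2}\Bigl(\int_{D_{l,n}}(\phi_j-\phi_i)\Bigr)^2\,=\,\frac{\ell_-(n)\,|D_{l,n}|}{\nu_+(n)\,l(l,n)}\cdot p^n_i\big|_{D_{l,n}}\cdot\frac{1}{|D_{l,n}|}\Bigl(\int_{D_{l,n}}(\phi_j-\phi_i)\Bigr)^2.
\end{equation*}
The first factor converges uniformly to $1$ by \eqref{example_CLT_add_condition}, while $\delta_+(n)\to 0$ together with $u(s,\cdot),p(s,\cdot)\in C(\nbar D)$ (Sobolev embedding of $H^s$ for $s>d/2$) and $\phi_j-\phi_i\in C(\nbar D)$ permit a standard Riemann-sum argument passing the remaining sum to $\int_D p_i(s)\,q_{ij}(u(s))\,(\phi_j-\phi_i)^2\,\dif\vz$. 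Expanding the square and summing over ordered pairs $(i,j)$ reproduces exactly the four terms of \eqref{def_example_cov_operator}. The main obstacle lies in controlling the stochastic fluctuations (quantified by the LLN) and the deterministic discretisation error simultaneously; this is tractable because all integrands are uniformly bounded, which allows convergence in probability to be upgraded to $L^1$ convergence. Once (D3) is verified, Theorem~\ref{clt_by_local_martingale} identifies the weak limit as the centred $(H^{-2s}(D))^m$-valued diffusion with covariance operator $\int_0^\cdot G(u(s),p(s))\,\dif s$ from Proposition~\ref{compartmental_example_cov_operators}.
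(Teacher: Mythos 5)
Your proposal is correct and follows essentially the same route as the paper's proof: verifying (D1)--(D3) of Theorem~\ref{clt_by_local_martingale} with $\alpha_n=\ell_-(n)/\nu_+(n)$, using the Sobolev embedding $H^{2s}(D)\hookrightarrow C(\nbar D)$ for the jump bounds and a Hilbert--Schmidt embedding of $H^{2s}(D)$ for the summable $\gamma_k$, and then handling the quadratic variation by combining the $L^1$-convergence from Theorem~\ref{LLN_compartmental_models} with a discretisation estimate driven by $\delta_+(n)\to 0$ and \eqref{example_CLT_add_condition}. The only cosmetic differences are that the paper takes $\gamma_k=\sum_i\|\varphi_k^i\|_{H^s}^2$ via Maurin's theorem rather than the $L^2$-norms, and controls the compartment oscillations via Poincar\'e's inequality (which is also what lets it handle $U^n_t\in H^1$ rather than requiring continuity) instead of your Riemann-sum argument.
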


\begin{rem} We note that for all reasonable physical domains $D$ and all initial conditions $(u_0,p_0)$ sequences of partitions $\pindex_n$ and initial conditions $(U^n_0,\Theta^n_0)$ for the PDMPs can be found satisfying the conditions of Theorems \ref{LLN_compartmental_models} and \ref{CLT_compartmental_model}. For example, a suitable sequence of partitions is obtained by grids of uniform cubes with decreasing edge length covering the domain $D$ and putting channels only into these cubes which are fully contained in $D$. For a more detailed discussion of these aspects we refer to the PhD thesis of one of the present authors \cite{RiedlerPhD}.
\end{rem}

\section{Conclusions}\label{section_conclusions}

As a general theoretical results for PDMPs we have derived a law of large numbers and martingale central limit theorem in Sections \ref{Sec_limitThm} and \ref {section_the_CLTs} of this study. The former establishes a connection of stochastic hybrid models to deterministic models given, e.g., by systems of partial differential equations. Whereas the latter connects the stochastic fluctuations in the hybrid models to diffusion processes. As a prerequisite to these limit theorems we carried out a thorough discussion of Hilbert space valued martingales associated to the PDMPs. Furthermore, these limit theorems provide the basis for a general Langevin approximation to PDMPs, i.e., certain stochastic partial differential equations that are expected to be similar in their dynamics to PDMPs. We have applied these results to compartmental-type models of spatially extended excitable membranes. Ultimately this yields a system of SPDEs which models the internal noise of a biological excitable membrane based on a theoretical derivation from exact stochastic hybrid models.

Topics for further research are motivated by corresponding results in finite-dimensions \cite{Kurtz2,Wainrib1} and for spatially inhomogeneous chemical reaction systems converging to reaction diffusion equations, cf.~\cite{Kotelenez1}. In these studies limit theorems are derived for the fluctuations around the deterministic limit identified by the law of large numbers. 
%
Using the notation of Section \ref{section_the_CLTs} we conjecture that the sequence of processes, $\bigl(\sqrt{\alpha_n}\,(U^n_t-u(t),z^n(\Theta^n_t)-p(t)\bigr)_{t\geq 0}$, $n\in\mathbb{N}$, converges in distribution to a suitable diffusion process. Moreover, we further conjecture that this limit is closely related to the asymptotic linearisation of the Langevin approximation around the solution of the deterministic limit, cf.~\cite{Wainrib1} wherein this result is proven for finite-dimensional PDMPs.

Further, on the applications side we believe that the Langevin approximation to spatio-temporal PDMP models of excitable membranes poses an important object for further investigation. Its derivation was the initial motivation of the study of the limit theorems in the present study and it is their main application herein which enables to write down the system of SPDEs that constitute a Langevin approximation. This system now demands for further analysis, particularly, first of all the question of existence and uniqueness of the Langevin approximation has to be addressed. Subsequently, as SPDEs are analytically more accessible than PDMPs a theoretical analysis of qualitative and quantitative properties of the models may be possible. 

Finally, we want to mention that the limit theorems presented also find applications beyond excitable membrane models. In current work in progress by one of the present authors the limit theorems derived in Sections \ref{Sec_limitThm} and \ref{section_the_CLTs} are applied to stochastic neural field equations, based on a model presented in \cite{Bressloff1}, cf.~a preliminary account in \cite{RiedlerMFO}. We also plan to investigate the connection to similar limits derived for reaction-diffusion models, cf.~the series of results on variations of the model in \cite{Kotelenez1,Kotelenez2,Kotelenez3,Kotelenez4} and \cite{Blount1,Blount2,Blount3,Blount4,Blount5}. An answer to this question would contribute to a more complete picture of limit-theorems for spatio-temporal stochastic models.


\medskip\medskip

\textbf{Acknowledgements:} During the time the presented work was accomplished M.~Riedler was a PhD student at Heriot-Watt University supported by the EPSRC grant EP/E03635X/1. M.~Riedler further acknowledges support from a joint UK Mathematical Neuroscience Network (MNN) and the Cell Signalling Network (SIGNET) travel grant.


\begin{appendix}

\section{Proof of Theorem \ref{banach_martingale_is_square_int} (It{\^o}-isometry)}\label{section_ito_iso_banach_proof}

In this proof we show that under condition \eqref{LLN_martingale_bound_first_lemma} the processes $M^n_j$, $j=1,\ldots,m$, $n\in\mathbb{N}$, defined in \eqref{local_martingale_2} are square-integrable, c\`adl\`ag martingales which satisfy the It{\^o}-isometry \eqref{banach_ito_isometry}.
Throughout the proof we fix a $j=1,\ldots,m$ and $n\in\mathbb{N}$ and the results holds for any such $j$ and $n$. Therefore, speaking of a PDMP in the following always refers to the PDMP $(U^n_t,\Theta^n_t)_{t\geq 0}$ corresponding to the fixed $n$. Further, for notational simplicity we omit the indices $n$ and $j$ discriminating processes and characteristics of PDMPs, i.e., $M^n_j$ and $z^n_j(\Theta^n)$ are denoted simply by $M$ and $z(\Theta)$. Finally, recall that $\tau_k$, $k=1,2,\ldots$, denotes the sequence of increasing random jump times of the PDMP which are stopping times satisfying $\lim_{k\to\infty} \tau_k=\infty$ almost surely.\medskip

First of all, note that the process $M$ is c\`adl\`ag by definition.
The proof of the remaining open results is split into three parts. In the first, part (a), we prove the martingale property for the real process $(\langle\phi,M(t)\rangle_E)_{t\geq0}$ for every $\phi\in E^\ast$. Then, the first main statement of Theorem \ref{banach_martingale_is_square_int}, the square-integrability of the process $M(t)$, is proved in part (b). Moreover, as square-integrability implies integrability, the Hilbert space martingale property follows. Finally, the second main statement, the It{\^o}-Isometry \eqref{banach_ito_isometry}, is established in part (c). The proof we present in part (b) is motivated by the proof of \cite[Prop.~4.5.3]{Jacobsen} which states the corresponding results for real-valued martingales associated with PDMPs. In extending to the present setup the method of proof employed therein one has to ensure, on the one hand, that the employed results and estimation procedures all have corresponding analoga in the infinite-dimensional setting. On the other hand, one has to carefully make sure that only the weaker regularity results available in infinite-dimensions are used. Finally, the introduction of random initial conditions, not considered in \cite{Jacobsen}, also necessitates some adaptations.

(a)\quad First note that for all $\phi\in E^\ast$ the real-valued processes $\langle\phi,M(t)\rangle_E$ satisfy
\begin{eqnarray}\label{second_gen_mart}
\langle\phi,M(t)\rangle_E&\!=\!&\langle \phi, z(\Theta_t)\rangle_E-\langle \phi,z(\Theta_0)\rangle_E\\[1ex]
&&\mbox{}-\int_0^t\Lambda(U_{s-},\Theta_{s-})\int_{K}\langle\phi,z(\xi) \rangle_E-\langle\phi,z(\Theta_{s-})\rangle_E\,\mu\bigl((U_{s-},\Theta_{s-}),\dif \xi\bigr)\,\dif s.\nonumber
\end{eqnarray}
Equation \eqref{second_gen_mart} is obtained from \eqref{local_martingale} due to the regularity of the PDMP as the set of jump times in $[0,t]$ is almost surely finite for all $t\geq 0$. Therefore the integrands in the right hand sides of \eqref{local_martingale} and \eqref{second_gen_mart} differ only on a set of Lebesgue measure zero almost surely. Moreover, the integrand in the right hand side of \eqref{second_gen_mart} has the form of the extended generator, cf.~Theorem \ref{PDMP_gen_theorem}, applied to the map 
\begin {equation}\label{mapping_to_be_in_ext_gen}
(u,\xi)\mapsto\langle\phi,z(\xi))\rangle_E\,,
\end{equation}
which is independent of $u$. It follows that the process $\langle\phi,M(t)\rangle_E$ is a local martingale if the map \eqref{mapping_to_be_in_ext_gen} is in the domain of the extended generator, cf.~Theorem \ref{PDMP_gen_theorem}. Obviously, path-differentiability almost everywhere is trivially satisfied as the map $t\mapsto \langle\phi,z(\Theta_t)\rangle_E$ is piecewise constant. Hence,  it remains to consider the integrability condition for which it is a sufficient that
\begin{equation}\label{condition_to_be_in_L1}
\EX\int_0^t\Lambda(U_{s-},\Theta_{s-})\int_K \big|\langle\phi,z(\xi)-z(\Theta_{s-})\rangle_E\big|\,\mu\bigl((U_{s-},\Theta_{s-}),\dif\xi)\,\dif s<\infty\quad\forall\,t\geq 0\,,
\end{equation}
cf.~\cite{BuckwarRiedler,Davis2,Jacobsen}. Using Young's inequality we obtain an upper bound to \eqref{condition_to_be_in_L1} by
\begin{equation*}
\frac{1}{2}\EX\!\!\int_0^t\!\!\Lambda(U_{s-},\Theta_{s-})\,\dif s + \frac{1}{2}\EX\!\!\int_0^t\!\!\Lambda(U_{s-},\Theta_{s-})\!\int_K\!\!\big|\langle\phi,z(\xi)-z(\Theta_{s-})\rangle_E\big|^2\,\mu\bigl((U_{s-},\Theta_{s-}),\dif\xi)\,\dif s.
\end{equation*}
Here the first expectation is finite due to the PDMP being regular and the second is finite by an immediate consequence of assumption \eqref{LLN_martingale_bound_first_lemma}.

Next, we show that the process is not only a local martingale but even a martingale. As mentioned above the process $\langle\phi,M(t)\rangle_E$ satisfies
\begin{equation*}
\langle\phi,M(t)\rangle_E=\int_0^t\int_K \langle\phi,z(\xi)-z(\Theta_{s-})\rangle_E\,\ntilde M(\dif s,\dif\xi)
\end{equation*}
where $\ntilde M:=N-\nhat N$ is the random martingale measure associated with the PDMP with counting measure $N$ and compensator $\nhat N(\dif\xi,\dif s)=\Lambda(U_{s-},\Theta_{s-})\, \mu\bigl((U_{s-},\Theta_{s-}),\dif\xi)\,\dif s$. The validity of this formula follows as \eqref{mapping_to_be_in_ext_gen} is in the extended generator. 
%
%
Thus the process $\langle\phi,M(t)\rangle_E$ has the form of a stochastic integral with respect to the martingale measure associated with the PDMP. Furthermore, due to \cite[Thm.~4.6.1]{Jacobsen} it holds that the process is a martingale if \eqref{condition_to_be_in_L1} is finite for all $t\geq 0$. But we have already shown that this holds due to the regularity of the PDMP and assumption \eqref{LLN_martingale_bound_first_lemma}.



 
(b)\quad We now prove the square-integrability of the process $M$. In a first step we prove in (b.1) that $M$ stopped at the first jump $\tau_i$ is square-integrable. Subsequently in part (b.2) this result is extended to $M$ stopped at any jump time $\tau_k$, $k\in\mathbb{N}$. Then we are able to infer square-integrability of the process $M$. As square-integrability implies integrability it follows from part (a) that $M$ is a Hilbert space valued martingale.

(b.1)\quad Note that prior to $\tau_1$ the jump component $\Theta$ of the PDMP remains constant. We introduce the notation
\begin{equation*}
\ntilde N(s):=\int_0^s\Lambda(U_r,\Theta_0)\int_{K}z(\xi)-z(\theta_0)\,\mu\bigl((U_r,\Theta_0),\dif\xi)\,\dif r
\end{equation*}
which implies that $s\mapsto\|\ntilde N(s)\|_E^2$ is almost surely absolutely continuous with derivative
\begin{eqnarray}\label{square_mart_proof_derivative_form}
\frac{\dif}{\dif s}\|\ntilde N(s)\|_E^2&=&2\bigl(\tfrac{\dif}{\dif t}\ntilde N(s),\ntilde N(s)\bigr)_E\nonumber\\
&=&2\,\Lambda(U_s,\Theta_0)\int_{K}\bigl(z(\xi)-z(\Theta_0),\ntilde N(s)\bigr)_E\,\mu\big((U_s,\Theta_s),\dif\xi\bigr)\,.
\end{eqnarray}
Due to the structure of a PDMP we obtain for the conditional expectation with respect to the initial condition
\begin{eqnarray*}
\lefteqn{\EX\bigl[\|M(\tau_1\wedge t)\|_E^2\,|\,\sF_0\bigr]\ =\ \|\ntilde N(t)\|_E^2\,\exp\Bigl(-\int_0^t\Lambda(U_r,\Theta_0)\,\dif r\Bigr)}\\
&&\hspace{-25pt}\mbox{}+\!\int_0^t\!\Bigl[\int_K\!\! \big\|z(\vartheta)\!-\!z(\Theta_0)\!-\!\ntilde N(s)\big\|_E^2\,\mu\bigl((U_s,\Theta_0),\dif\vartheta)\bigr)\Bigr]\,\Lambda(U_s,\Theta_0)\exp\Bigl(-\!\int_0^s\!\Lambda(U_r,\Theta_0)\,\dif r\Bigr)\,\dif s.
\end{eqnarray*}
That is, the first term in the right hand side is the position of the stopped process $\|M(\tau_1\wedge t)\|_E^2$ at time $t$ if $t<\tau_1$ times the conditional probability that the first jump does not occur before $t$. The second term is its position after the jump integrated over the conditional density that a jump occurs in $[0,t]$. We apply integration by parts to the first term (note that $\ntilde N(0)=0$) and find that
\begin{eqnarray*}
\|\ntilde N(t)\|_E^2\,\exp\Bigl(-\int_0^t\Lambda(U_s,\Theta_0)\,\dif s\Bigr)
&=&\int_0^t\Bigl[ 2(\tfrac{\dif}{\dif t}\ntilde N(s),\ntilde N(s))_E\,\exp\Bigl(-\int_0^s\Lambda(U_r,\Theta_0)\,\dif r\Bigr)\\
&&\mbox{}-\|\ntilde N(s)\|_E^2\,\Lambda(U_s,\Theta_0) \,\exp\Bigl(-\int_0^s\Lambda(U_r,\Theta_0)\,\dif r\Bigr)\Bigr]\,\dif s\,.
\end{eqnarray*}
Therefore we obtain
\begin{eqnarray*}
\lefteqn{\EX\bigl[\|M(\tau_1\wedge t)\|_E^2\,|\,\sF_0\bigr]\ =\  \int_0^t\Bigl[ 2(\tfrac{\dif}{\dif t}\ntilde N(s),\ntilde N(s))_E\,\exp\Bigl(-\int_0^s\Lambda(U_r,\Theta_0)\,\dif r\Bigr)\Bigr]\,\dif s}\\[1ex]
&&\phantom{xxxxxxxxx}\mbox{}+\int_0^t\Bigl[\Bigl(\int_K \big\|z(\vartheta)-z(\Theta_0)-\ntilde N(s)\big\|_E^2-\big\|\ntilde N(s)\big\|_E^2\,\mu\bigl((U_s,\Theta_0),\dif\vartheta)\bigr)\Bigr)\\
&&\phantom{xxxxxxxxxxxxxxxxxxxxxxxxxxxxxxx}\Lambda(U_s,\Theta_0) \,\exp\Bigl(-\int_0^s\Lambda(U_r,\Theta_0)\,\dif r\Bigr)\Bigr]\,\dif s\,.
\end{eqnarray*}
Note that $\|z(\vartheta)-z(\Theta_0)-\ntilde N(s)\|_E^2=\|z(\vartheta)-z(\Theta_0)\|_E^2+\|\ntilde N(s)\|_E^2-2(z(\vartheta)-z(\Theta_0),\ntilde N(s))_E$ and thus
\begin{eqnarray*}
\lefteqn{\EX\bigl[\|M(\tau_1\wedge t)\|_E^2\,|\,\sF_0\bigr]\ =}\\[1ex]
&&\hspace{-20pt} \int_0^t \Bigl[2(\tfrac{\dif}{\dif t}\ntilde N(s),\ntilde N(s))_E\,\exp\Bigl(-\int_0^s\Lambda(U_r,\Theta_0)\,\dif r\Bigr)\Bigr]\,\dif s\,\\
&&\hspace{-23pt} \mbox{} -2\! \int_0^t\!\Bigl[\Bigl(\int_K\!\bigr(z(\vartheta)\!-\!z(\Theta_0),\ntilde N(s)\bigr)_E \,\mu\bigl((U_s,\Theta_0),\dif\vartheta)\bigr)\!\Bigr)\Lambda(U_s,\theta_0)\exp\Bigl(\!-\!\!\int_0^s\!\!\Lambda(U_r,\Theta_0)\,\dif r\Bigr)\Bigr]\dif s\\
&&\hspace{-23pt} \mbox{} + \int_0^t\Bigl[\Bigl(\int_K \|z(\vartheta)-z(\Theta_0)\|_E^2\,\mu\bigl((U_s,\Theta_0),\dif\vartheta)\bigr)\Bigr)\,\Lambda(U_s,\Theta_0) \,\exp\Bigl(-\!\int_0^s\!\Lambda(U_r,\Theta_0)\,\dif r\Bigr)\Bigr]\dif s.
\end{eqnarray*}
Due to form of the derivative \eqref{square_mart_proof_derivative_form} the first two terms cancel and we are left with the equality
\begin{eqnarray}\label{banach_proof_square_integ_res_1}
\lefteqn{\EX\bigl[\|M(\tau_1\wedge t)\|_E^2\,|\,\sF_0\bigr]}\\[1ex]
&&\hspace{-5pt}=\ \int_0^t\Lambda(U_s,\Theta_0)\int_K \|z(\vartheta)-z(\Theta_0)\|_E^2\,\mu\bigl((U_s,\Theta_0),\dif\vartheta)\bigr)\, \,\exp\Bigl(-\int_0^s\Lambda(U_r,\Theta_0)\,\dif r\Bigr)\dif s\,.\nonumber
\end{eqnarray}
Next we calculate the expectation of the real-valued process
\begin{equation*}
\ntilde N_2(s):=\int_0^s\Lambda(U_r,\Theta_0)\int_{K}\|z(\vartheta)-z(\Theta_0)\|_E^2\,\mu\bigl((U_r,\Theta_0),\dif\vartheta\bigr)\,\dif r 
\end{equation*}
stopped at $\tau_1$. The process $\ntilde N_2$ is connected to the process $\ntilde N$ defined at the beginning of part (b.1) inasmuch as the integrand of the former is the squared norm of the latter. Furthermore note that $\ntilde N_2$ is the term inside the expectation in the right hand side of the It{\^o}-isometry \eqref{banach_ito_isometry}. Thus the aim is now to show that the conditional expectation of $\ntilde N_2(t\wedge\tau_1)$ equals the conditional expectation of $\|M(t\wedge\tau_1)\|_E^2$. Again due to the particular structure of the PDMP we obtain for the conditional expectation
\begin{eqnarray*}
\lefteqn{\EX\bigl[\ntilde N_2(\tau_1\wedge t)\,|\,\sF_0\bigr]}\\[1ex]
&&\hspace{-15pt}= \ntilde N_2(s)\,\exp\Bigl(-\!\int_0^t\!\Lambda(U_r,\Theta_0)\,\dif r\Bigr) +\int_0^t\Bigl[ \ntilde N_2(s)\,\Lambda(U_s,\Theta_0)\,\exp\Bigl(-\!\int_0^s\!\Lambda(U_r,\Theta_0)\,\dif r\Bigr)\Bigr]\,\dif s\,. 
\end{eqnarray*}
Integration by parts applied to the integral term yields
\begin{eqnarray*}
\lefteqn{\int_0^t\Bigl[ \ntilde N_2(s)\,\Lambda(U_s,\Theta_0)\,\exp\Bigl(-\!\int_0^s\Lambda(U_r,\Theta_0)\,\dif r\Bigr)\Bigr]\,\dif s\ = \ -\ntilde N_2(t)\,\exp\Bigl(-\!\int_0^t\Lambda(U_r,\Theta_0)\,\dif r\Bigr)}\\
&&\mbox{}\hspace{-10pt}+\int_0^t\Bigl[ \Lambda(U_s,\Theta_0)\int_{K}\|z(\vartheta)-z(\Theta_0)\|_E^2\,\mu\bigl((U_s,\Theta_0),\dif\vartheta\bigr) \,\exp\Bigl(-\int_0^s\Lambda(U_r,\Theta_0)\,\dif r\Bigr)\Bigr]\,\dif s.
\end{eqnarray*}
Therefore we obtain that
\begin{eqnarray}\label{banach_proof_square_integ_res_2}
\lefteqn{\EX\bigl[\ntilde N_2(\tau_1\wedge t)\,|\,\sF_0\bigr] }\\[1ex]
&&=\ \int_0^t \Lambda(U_s,\Theta_0)\int_{K}\|z(\vartheta)-z(\Theta_0)\|_E^2\,\mu\bigl((U_s,\Theta_0),\dif\vartheta\bigr) \,\exp\Bigl(-\int_0^s\Lambda(U_r,\Theta_0)\,\dif r\Bigr)\,\dif s\,.\nonumber
\end{eqnarray}
A comparison of the right hand sides in equalities \eqref{banach_proof_square_integ_res_1} and \eqref{banach_proof_square_integ_res_2} shows that they are equal and thus we obtain after taking the expectation of both conditional expectations that
\begin{equation}\label{proof_first_stopped_ito_isometry}
\EX \|M(\tau_1\wedge t)\|_E^2 \, =\, \EX\ntilde N_2(\tau_1\wedge t)\,. 
\end{equation}
As $\ntilde N_2$ is increasing and thus $\ntilde N_2(\tau_1\wedge t)\leq \ntilde N_2(t)$ almost surely, we obtain that the right hand side in this equation is finite due to condition \eqref{LLN_martingale_bound_first_lemma}. Note that \eqref{proof_first_stopped_ito_isometry} is the It{\^o}-isometry \eqref{banach_ito_isometry} for the stopped process $M(t\wedge\tau_1)$.

(b.2)\quad In this part of the proof we show the square-integrability for the process $M$ stopped at an arbitrary jump time $\tau_k$, $k\in\mathbb{N}$, and finally for the non-stopped process $M$. To this end we first note that Analogously to part (b.1) we find that
\begin{equation*}
\EX\Bigl[\big\|M(\tau_{k+1}\wedge t)-M(\tau_k\wedge t)\big\|_E^2\,\big|\,\sF_{\tau_k}\Bigr]\, =\, \EX\Bigr[\ntilde N_2(\tau_{k+1}\wedge t)-\ntilde N_2(\tau_k\wedge t)\,\big|\,\sF_{\tau_k}\Bigr]\,. 
\end{equation*}
Thus taking expectations on both sides of this equality yields
\begin{equation}\label{square_int_middle_res}
\EX\,\|M(\tau_{k+1}\wedge t)-M(\tau_k\wedge t)\big\|_E^2\, =\, \EX\,\ntilde N_2(\tau_{k+1}\wedge t)-\EX\,\ntilde N_2(\tau_k\wedge t)\, <\infty\,,
\end{equation}
where the right hand side is finite as due to\eqref{LLN_martingale_bound_first_lemma} both expectations are finite.

By induction we next show that each $M(\tau_k\wedge t)$ is square-integrable. Assume that $\EX\|M(\tau_k\wedge t)\|_E^2<\infty$, where the induction basis for $k=1$ holds due to part (b.1). Then the reverse triangle inequality yields that
\begin{eqnarray*}
\lefteqn{\EX\|M(\tau_{k+1}\wedge t)\|_E^2+\EX\|M(\tau_k\wedge t)\|_E^2-2\EX\bigl(\|M(\tau_{k+1}\wedge t)\|_E\,\|M(\tau_{k}\wedge t)\|_E\bigr)}\\[1ex]
&&\phantom{xxxxxxxxxxxxxxxxxxxxxxxxxxxxxxxxxxx}\leq\ \EX \|M(\tau_{k+1}\wedge t)-M(\tau_k\wedge t)\|_E^2\,.
\end{eqnarray*}
Here the right hand side is finite due to \eqref{square_int_middle_res} and an application of Young's inequality to the product in the left hand side yields that for all $\eps>0$
\begin{equation*}
(1-2\eps)\,\EX\|M(\tau_{k+1}\wedge t)\|_E^2+\bigr(1-\tfrac{1}{2\eps}\bigl)\,\EX\|M(\tau_k\wedge t)\|_E^2\,<\,\infty
\end{equation*}
Assume that $\EX\|M(\tau_{k+1}\wedge t)\|_E^2=\infty$. Then choosing $\eps<1/2$ we obtain a contradiction due to the induction hypotheses.

In a final step of this part of the proof we show square-integrability for the non-stopped process. Using Fatou's Lemma and monotone convergence for interchanging limits and expectation we obtain the following upper estimate
\begin{eqnarray}\label{ito_iso_proof_upper_est}
\EX\|M(t)\|_E^2 & =& \EX\liminf_{k\to\infty} \|M(\tau_k\wedge t)\|_E^2\nonumber\\[1ex]
&\leq & \liminf_{k\to\infty} \EX\|M(\tau_k\wedge t)\|_E^2
\ =\ \lim_{k\to\infty} \EX \ntilde N_2(\tau_k\wedge t)
\ =\ \EX\ntilde N_2(t)\,,\phantom{xxx}
\end{eqnarray}
where the final term is finite due to condition \eqref{LLN_martingale_bound_first_lemma}. Moreover, as square-integrability implies integrability, the martingale property for the Hilbert space valued process $M$ now follows due to part (a).

(c)\quad Finally, in the last part of the proof we establish the It{\^o}-isometry. To this end we first show that equality \eqref{proof_first_stopped_ito_isometry} holds for all $\tau_k\wedge t$, $k\in \mathbb{N}$. Again we proceed by induction with the induction basis given by \eqref{proof_first_stopped_ito_isometry}. We observe that
\begin{eqnarray}
\|M(\tau_{k+1}\wedge t)-M(\tau_k\wedge t)\|_E^2 & =& \|M(\tau_{k+1}\wedge t)\|_E^2-\|M(\tau_k \wedge t)\|_E^2\nonumber\\[2ex]
&&\mbox{}-2 \bigl(M(\tau_{k}\wedge t),M(\tau_{k+1}\wedge t)-M(\tau_{k}\wedge t)\bigr)_E.\phantom{xxxxx}\label{first_bit_ito_iso_induction}
\end{eqnarray}
Taking the conditional expectation with respect to the stopped $\sigma$-field $\sF_{\tau_k\wedge t}$ we find that the second term in the right hand side of \eqref{first_bit_ito_iso_induction} vanishes as it holds
\begin{eqnarray*}
\lefteqn{\EX\bigl[\bigl(M(\tau_{k}\wedge t),M(\tau_{k+1}\wedge t)-M(\tau_{k}\wedge t)\bigr)_E\,\big|\,\sF_{\tau_k\wedge t}\bigr]\ =\ }\\[2ex] &&\phantom{xxxxxxxxxxxxx} =\ \bigl(M(\tau_k\wedge t),\EX\bigl[M(\tau_{k+1}\wedge t)-M(\tau_{k}\wedge t)\bigr)\,\big|\,\sF_{\tau_k\wedge t}\bigr]\bigr)_E\ =\ 0
\end{eqnarray*}
due to the following properties of the conditional expectation: Firstly, for $E$--valued random variables $X,Y$ such that $\EX\|X\|_E\|Y\|_E<\infty$ 
it holds for $\mathcal{G}$--measurable $X$ that $\EX\bigl[(X,Y)_E|\mathcal{G}\bigr] = \bigl(X,\EX[Y|\mathcal{G}]\bigr)_E$ \cite[Lemma 2.1.2]{Stolze}. Secondly, the Optional Sampling Theorem, i.e., $\EX\bigl[M(\tau_{k+1}\wedge t)\,\big|\,\sF_{\tau_k\wedge t}\bigr]=M(\tau_{k}\wedge t)\bigr)$ in the above application, also holds for Hilbert space-valued martingales\footnote{The Optional Sampling Theorem can be proved similarly to the methods employed for \cite[Lemma 2.1.2]{Stolze} relying on the linearity properties of the Bochner integral and the monotone convergence theorem.}. Thus we obtain
\begin{eqnarray*}
\lefteqn{\EX\bigl[\|M(\tau_{k+1}\wedge t)\|_E^2\big|\sF_{\tau_k\wedge t}\bigr]-\EX\bigl[\|M(\tau_k\wedge t)\|_E^2\big|\sF_{\tau_k\wedge t}\bigr]}\\[1ex]
&&\phantom{xxxxxxxxxxxxxxxxxxx} =\ \EX\bigl[\ntilde N_2(\tau_{k+1}\wedge t)\big|\sF_{\tau_k\wedge t}\bigr] - \EX\bigl[\ntilde N_2(\tau_k\wedge t)\big|\sF_{\tau_k\wedge t}\bigr]\,.
\end{eqnarray*}
Taking the expectation on both sides of this equality and using the induction hypotheses, i.e., the second expectations on both sides of the above equality equate, yields
\begin{equation}\label{prove_sq_integrability_last_eq}
\EX\|M(\tau_{k+1}\wedge t)\|_E^2 = \EX\ntilde N_2(\tau_{k+1}\wedge t)\,.
\end{equation}
We conclude the proof extending the It{\^o}-isometry \eqref{prove_sq_integrability_last_eq} from the stopped processes to the non-stopped process. We have already obtained the upper estimate $\EX\|M(t)\|_E^2\leq \EX\ntilde N_2(t)$, cf.~\eqref{ito_iso_proof_upper_est}. Hence it remains to prove that a lower bound is given by the same term. As $\|M(t)\|_E^2$ is a real-valued submartingale it holds for all $k\geq 1$ due to the standard Optional Sampling Theorem for c\`adl\`ag submartingales, see, e.g., \cite[App.~B]{Jacobsen}, that
\begin{equation*}
\EX\|M(t)\|_E^2\,\geq\,\EX\|M(\tau_k\wedge t)\|_E^2 \,=\, \EX\ntilde N_2(\tau_k\wedge t)\,. 
\end{equation*}
Hence, for $k\to\infty$ we obtain by monotone convergence $\EX\|M(t)\|_E^2\,\geq\, \EX \ntilde N_2(t)$ which, combined with the upper bound \eqref{ito_iso_proof_upper_est}, yields the It{\^o}-isometry \eqref{banach_ito_isometry}. The proof is completed.

\section{Proofs for the neuron models}\label{section_9_proof_compart_model}


\subsection{Proof of Theorem \ref{LLN_compartmental_models}  (Conditions for the LLN)}

We apply Theorem \ref{LLN} for the choice of spaces $X=H^1_0(D)$, $H=L^2(D)$ and $E=L^2(D)$. Hence, we have to prove in the following that the assumptions therein are satisfied, i.e., (i) the one-sided Lipschitz condition \eqref{onesided_lip} on the operators $A$ and $B$ defined by \eqref{limit_theorem_apl_pdmp_op}, (ii) the Lipschitz condition on the right hand side of the gating system \eqref{limit_theorem_det_gating_sys}, (iii) the uniform convergence of the generator and (iv) the martingale convergence. Finally, in (v) we extend the convergence in probability due to Theorem \ref{LLN} to convergence in the mean \eqref{example_LLN_conv_in_mean}. In the following we use $\cdot$ to denote the pointwise product of real functions on $D$.\medskip

(i)\quad 
For the non-linear operator $B$ we find that the left hand side in the Lipschitz condition is for almost all $t$ given by a finite sum of terms
\begin{equation}\label{example_summands_in_B}
\langle p_i\cdot (E_i-u)-\nhat p_i\cdot(E_i-v), u-v\rangle_{H^1}\,,\quad i=1,\ldots,m,
\end{equation}
with $u,\,v\in H^1_0(D)$ and $p_i,\,\nhat p_i\in L^2(D)$. Hence, the duality pairing corresponds to the inner product in $L^2(D)$. We estimate each of the summand of the type \eqref{example_summands_in_B} separately. Using the triangle inequality we obtain
\begin{equation*}
\big|\langle p_i\cdot (E_i-u)-\cdot \nhat p_i\cdot(E_i-v), u-v\rangle_{H^1}\big|\,\leq\,|E_i|\,\big|(p_i-\nhat p_i,u-v)_{L^2}\big|+\big|(p_i\cdot u-\nhat p_i\cdot v,u-v)_{L^2}\big|\,.
\end{equation*} 
Here, the first term in this right hand side is further estimated using Cauchy-Schwarz and Young's inequality, which yields
\begin{equation*}
\big|(p_i-\nhat p_i,u-v)_{L^2}\big|\, \leq\, \tfrac{1}{2}\big\|p_i-\nhat p_i\big\|^2_{L^2}+\tfrac{1}{2}\big\|u-v\big\|_{L^2}^2\,.
\end{equation*}
For the second term we obtain, making use of the triangle inequality, Cauchy-Schwarz and Young's inequality and the pointwise bounds on $p_i$ and $v$, the sequence of estimates
\begin{eqnarray*}
\big|(p_i\cdot u-\nhat p_i\cdot v,u-v)_{L^2}\big|&\leq&
\big|(p_i\cdot(u-v),u-v)_{L^2}\big|+\big|(p_i-\nhat p_i,v\cdot(u-v))_{L^2}\big|\\[1ex]
&\leq& \big\|p_i\cdot(u-v)\big\|_{L^2}\big\|u-v\big\|_{L^2}+\big\|p_i-\nhat p_i\big\|_{L^2}\big\|v\cdot(u-v)\big\|_{L^2}\\[1ex]
&\leq&\big\|u-v\big\|_{L^2}^2+\tfrac{\nbar u^2}{2}\,\big\|u-v\big\|_{L^2}^2+\tfrac{1}{2}\big\|p_i-\nhat p_i\big\|_{L^2}^2\,.
\end{eqnarray*}
A summation over all these estimates for $i=1,\ldots m$ yields
\begin{equation*}
\langle B(p,u)-B(\nhat p,v),u-v\rangle_{H^1} \leq m\bigl(1+\tfrac{\nbar u+\nbar u^2}{2}\bigr)\,\|u-v\|_{L^2}^2 + \frac{1+\nbar u}{2} \sum_{i=1}^m \big\|p_i-\nhat p_i\big\|_{L^2}^2\,.
\end{equation*}
Adding the estimate
\begin{equation*}
\langle A(u-v),u-v\rangle_{H^1}\,\leq\, - \gamma_1\|u-v\|_{H^1}^2+\gamma_2\|u-v\|_{L^2}^2\leq \gamma_2\|u-v\|_{L^2}^2
\end{equation*}
for some $\gamma_1,\gamma_2>0$, which holds as the linear operator $A$ is coercive and independent of $p$, we obtain
\begin{equation*}
\langle A(u-v),u-v\rangle_{H^1}+\langle B(p,u)-B(\nhat p,v),u-v\rangle_{H^1} \leq C\,\Bigl(\|u-v\|_{L^2}^2 + \sum_{i=1}^m \big\|p_i-\nhat p_i\big\|_{L^2}^2\Bigr)
\end{equation*}
for a suitable constant $C$. Finally, integrating over $(0,T)$ we find the one-sided Lipschitz condition \eqref{onesided_lip} is satisfied.\medskip

(ii)\quad 
Due to the triangle inequality it suffices to consider differences of the form $\| p_i\cdot q(u)-\nhat p_i\cdot q(v)\|_{L^2}$, where $q$ substitutes for an arbitrary rate function $q_{jk}$. Using the triangle inequality, the pointwise boundedness of $\nhat p_i$ and $q$ by $1$ and $\nbar q$, respectively, and the Lipschitz condition on the rate functions $q$ (with common Lipschitz constant $L$) we obtain
\begin{eqnarray*}
\|p_i\cdot q(u)-\nhat p_i\cdot q(v)\|_{L^2}&\leq& \|p_i\cdot q(u)-\nhat p_i\cdot q(u)\|_{L^2}+\|\nhat p_i\cdot q(u)-\nhat p_i\cdot q(v)\|_{L^2}\\[1ex]
&\leq& \nbar q\,\|p_i-\nhat p_i\|_{L^2}+ L\,\|u-v\|_{L^2}\,.
\end{eqnarray*}
A summation over all such separate estimates, integrating and squaring both resulting sides yield the Lipschitz condition \eqref{det_sys_lipschitz}.\medskip

(iii)\quad In order to prove the convergence of the generators \eqref{finite_var_bound} we employ in the following two technical results which we collect in a separate proposition. Firstly, the purpose of the formula \eqref{example_usable_form_of_gen} is to transform the generator of the PDMP into a form that allows comparison with the deterministic limit system \eqref{limit_theorem_det_gating_sys}. Secondly, the inequality \eqref{unifrom_estimate_ogH_1_norm_example}, which bounds the norm $\|U^n\|_{L^2((0,T),H^1)}$ by a deterministic constant uniformly over $n\in\mathbb{N}$, is used repeatedly in the subsequent estimation procedures. 

\begin{prop}\label{proposition_in_examepl_proof}\begin{enumerateI} \item The generator of the PDMP satisfies
\begin{equation}\label{example_usable_form_of_gen}
\hspace{-10pt}\Lambda^n(u,\theta^n)\int_{K_n} \Bigl(z^n_i(\xi)-z^n_i(\theta^n)\Bigr)\,\mu^n\bigl((u,\theta^n),\dif\xi\bigr)\,=\, \sum_{j\neq i} \Bigl(z^n_j(\theta^n)\cdot q_{ji}^n(u)-z^n_i(\theta^n)\cdot q_{ij}^n(u)\Bigr)
\end{equation}
where
\begin{equation*}
q_{ij}^n(u)=\sum_{k=1}^{p(n)} Q^{k,n}_{ij}(u)\,\mathbb{I}_{\,D_{k,n}}\,\in L^2(D) \,.
\end{equation*}
\item For all $n\in\mathbb{N}$ and all $T>0$ it holds that
\begin{equation}\label{unifrom_estimate_ogH_1_norm_example}
\int_0^T\|U^n_t\|_{H^1}^2\,\dif t\leq
C_1(1+T)\e^{2C_2T},
\end{equation}
where the constants $C_1,C_2$ are deterministic and independent of $n\in\mathbb{N}$.
\end{enumerateI}
\end{prop}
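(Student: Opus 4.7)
My plan is to prove (i) by direct enumeration of the admissible jumps and (ii) by a standard energy estimate, both exploiting the explicit compartmental structure set up in Section~\ref{subsection_compartmental_type_models}.

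For part (i) the key observation is that the support of $\mu^n((u,\theta^n),\cdot)$ consists only of configurations $\xi\in K_n$ which differ from $\theta^n$ by a single channel in a single compartment $D_{k,n}$ switching from some state $j$ to some state $i'$ with $j\ne i'$. By \eqref{limits_compartments_one_subdomain_rate} and \eqref{arnold_approach_total_rate} the $\mu^n$-weighted intensity of such a jump equals $\theta^{k,n}_j\, Q^{k,n}_{j i'}(u)$. For fixed $i$ the associated increment $z^n_i(\xi)-z^n_i(\theta^n)$ equals $+\frac{1}{l(k,n)}\mathbb{I}_{D_{k,n}}$ when $i'=i$ and $j\ne i$ (gain of a state-$i$ channel), $-\frac{1}{l(k,n)}\mathbb{I}_{D_{k,n}}$ when $j=i$ and $i'\ne i$ (loss), and $0$ otherwise. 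Inserting these three cases into the left-hand side of \eqref{example_usable_form_of_gen}, splitting the triple sum over $(k,j,i')$ into gain and loss contributions, and recognising, via the pairwise disjointness of the indicators $\mathbb{I}_{D_{k,n}}$, that
\begin{equation*}
\sum_{k=1}^{p(n)} \frac{\theta^{k,n}_j}{l(k,n)}\, Q^{k,n}_{j i}(u)\,\mathbb{I}_{D_{k,n}} \;=\; z^n_j(\theta^n)\cdot q^n_{j i}(u),
\end{equation*}
and analogously for the loss term, yields precisely the right-hand side of \eqref{example_usable_form_of_gen}.

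For part (ii) the plan is to test the abstract evolution equation \eqref{limit_theorem_apl_stoch_cable} against $U^n_t$ itself. On each interjump interval $(\tau_k,\tau_{k+1})$ the jump component is frozen, and the argument based on \cite[Sec.~5.9, Thm.~3]{Evans} already used in Section~\ref{section_proof_LLN} gives
\begin{equation*}
\tfrac{1}{2}\,\frac{\dif}{\dif t}\|U^n_t\|_{L^2}^2 \;=\; \bigl\langle A(z^n(\Theta^n_t))\,U^n_t + B(z^n(\Theta^n_t),U^n_t),\,U^n_t\bigr\rangle_{H^1}
\end{equation*}
for almost every $t$. Uniform strong ellipticity of $A$ yields $\langle A(\cdot)v,v\rangle_{H^1}\le -\gamma_1\|v\|_{H^1}^2 + \gamma_2\|v\|_{L^2}^2$ with constants independent of $n$ and of the frozen value of $\theta^n$, while the almost-everywhere pointwise bounds $0\le z^n_i\le 1$ and $\nbar u_-\le U^n_t\le \nbar u_+$ (the latter from \cite[Sec.~3.2]{RiedlerPhD}) bound $\bigl|\langle B(z^n(\Theta^n_t),U^n_t),U^n_t\rangle_{H^1}\bigr|$ by $C_1' + C_2'\|U^n_t\|_{L^2}^2$ with deterministic constants. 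Continuity of $U^n_t$ across each $\tau_k$ propagates the resulting differential inequality to all of $[0,T]$, and Gronwall's inequality applied to $\|U^n_t\|_{L^2}^2$ (discarding the non-positive $H^1$-term) yields an $n$-independent bound of the form $C_1(1+T)\e^{2C_2 T}$; reintegrating the full differential inequality and absorbing the $L^2$-term with this Gronwall bound produces the claimed estimate on $\int_0^T\|U^n_t\|_{H^1}^2\,\dif t$.

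The main obstacle is the bookkeeping in (i): for fixed $i$ one must correctly separate, for each compartment $D_{k,n}$, the gain transitions ($\ast\to i$) from the loss transitions ($i\to\ast$) and match them respectively to the rates $q^n_{ji}$ and $q^n_{ij}$, with the orientation of the subscripts reversed between gain and loss; once this is done the identification of the resulting sums with $z^n_j(\theta^n)\cdot q^n_{ji}(u)$ via the disjoint indicators is immediate. Part (ii) is then a routine application of the energy method, with the sole point requiring care being that all constants remain independent of $n$, which is guaranteed by the uniform pointwise bounds on $U^n_t$ and on the coordinate functions $z^n_i$.
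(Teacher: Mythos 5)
Your proposal is correct and follows essentially the same route as the paper: part (i) is proved by exactly the paper's enumeration of single-channel transitions $\theta^n_{k,j\to i}$, separation into gain and loss contributions, and identification of the compartment sums with $z^n_j(\theta^n)\cdot q^n_{ji}(u)$ via the disjoint indicators, while part (ii) is the same parabolic energy estimate (the paper simply cites the standard result from Evans for the equation with the reaction term treated as an inhomogeneity, whereas you carry out the testing-against-$U^n_t$ and Gronwall steps explicitly), with the $n$-independence of the constants secured in both cases by the uniform pointwise bounds on $z^n_i$, $U^n_t$ and the initial data.
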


\begin{proof} (a)\quad We denote by $\theta^{n}_{k,i\to j}$ for all $k=1,\ldots,p(n)$ and all $i\neq j$, $i,j=1,\ldots m$ the configuration in $K_n$ that arises from the configuration $\theta^n$ through the event that a channel in state $i$ located in the compartment $D_{k,n}$ switches to state $j$. Then simple reorganisation of finite sums yields
\begin{eqnarray*}
\lefteqn{\Lambda^n(u,\theta^n)\int_{K_n} \Bigl(z^n_i(\xi)-z^n_i(\theta^n)\Bigr)\,\mu^n\bigl((u,\theta^n),\dif\xi\bigr)}\\
&=& \sum_{k=1}^{p(n)}\sum_{j\neq i}\Bigl(z_i^n(\theta^n_{k,j\to i})-z_i^n(\theta^n)\Bigr)\,\theta^{k,n}_j\,Q_{ji}^{k,n}(u) + \sum_{k=1}^{p(n)}\sum_{j\neq i}\Big(z_i^n(\theta^n_{k,i\to j})-z_i^n(\theta^n)\Bigr)\,\theta^{k,n}_i\,Q_{ij}^{k,n}(u)\\
&=& \sum_{k=1}^{p(n)}\sum_{j\neq i}\Bigl(\frac{1}{l(k,n)}\,\mathbb{I}_{\,D_{k,n}}\Bigr)\,\theta^{k,n}_j\,Q_{ji}^{k,n}(u) + \sum_{k=1}^{p(n)}\sum_{j\neq i}\Bigl(-\frac{1}{l(k,n)}\,\mathbb{I}_{\,D_{k,n}}\Bigr)\,\theta^{k,n}_i\,Q_{ij}^{k,n}(u)\\
&=& \sum_{j\neq i}z^n_j(\theta^n)\cdot\Bigl(\,\sum_{k=1}^{p(n)} Q_{ji}^{k,n}(u)\,\mathbb{I}_{\,D_{k,n}}\Bigr)\, - \sum_{j\neq i}z^n_i(\theta^n)\cdot\Bigl(\,\sum_{k=1}^{p(n)} Q_{ij}^{k,n}(u)\,\mathbb{I}_{\,D_{k,n}}\Bigr)\,.
\end{eqnarray*}
Thus we obtain that the generator  satisfies \eqref{example_usable_form_of_gen}. 

(b)\quad By definition of a PDMP it holds that the component $(U^n_t)_{t\geq 0}$ is the weak solution of the evolution equation
\begin{equation*}
\dot U^n_t = AU^n_t +\sum_{i=1}^m g_i\,z^n_i(\Theta^n_t)\,(E_i-U^n_t)
\end{equation*}
with initial condition $U^n_0$. We consider the reaction term in this equation as a given inhomogeneity. Then standard estimation procedures from the theory of linear parabolic partial differential equations, cf.~\cite[Sec.~7]{Evans}, yield, after appropriately estimating the inhomogeneous term,
\begin{equation*}
\int_0^T\|U^n_t\|_{H^1}^2\,\dif t\leq
K_1\e^{2K_2T}\Bigl(\|U^n_0\|_{L^2}^2+2\nbar u^2\sum_i \|g_i\|_{L^\infty}\int_0^T\|z^n_i(\Theta^n_t)\|_{L^1}\,\dif t\Bigr),
\end{equation*}
where the constants $K_1,K_2$ are deterministic and depend only on the domain $D$ and the coefficients of $A$. Further, it holds that $\|z^n_i(\Theta^n_t)\|_{L_1}\leq |D|$ and the sequence of initial conditions is bounded by assumption as $U^n_0(x)\in[\nbar u_-,\nbar u_+]$ for all $x\in \nbar D$ almost surely. The inequality \eqref{unifrom_estimate_ogH_1_norm_example} follows.
\end{proof}
We now proceed to the actual proof of the convergence \eqref{finite_var_bound}. To this end we need to consider for almost every $t$ and all $i=1,\ldots,m$, the convergence in $L^2(D)$ of \eqref{example_usable_form_of_gen} to $F_i(z^n(\Theta^n_t),U^n_t)$ where $F_i$ is as defined in \eqref{limit_theorem_det_gating_sys}. That is, we have to estimate
\begin{equation}\label{arnold_approach_the_diff_to_be_estimated}
\Big\|\sum_{j\neq i} \Bigl(z^n_j(\Theta^n_t)\cdot q_{ji}^n(U^n_t)-z^n_i(\Theta^n_t)\cdot q_{ij}^n(U^n_t)\Bigr)-\sum_{j\neq i} \Bigl(z^n_j(\Theta^n_t)\cdot q_{ji}(U^n_t)-z^n_i(\Theta^n_t)\cdot q_{ij}(U^n_t)\Bigr)\Big\|_{L^2}\,.
\end{equation}
We find that the single summands in the two summations match up and thus it suffices to consider each of them separately. Employing the boundedness of the coordinate functions, i.e., $\|z_j^n(\Theta^n_t)\|_{L^\infty}\leq 1$ we obtain the estimates
\begin{eqnarray}\label{estimate_example_gen_convergence_0}
\big\|z^n_j(\Theta^n_t)\cdot q_{ji}^n(U^n_t)-z^n_j(\Theta^n_t)\cdot q_{ji}(U^n_t)\big\|_{L^2}^2
\!&=&\!\|z^n_j(\Theta^n_t)\big\|_{L^\infty}^2\,\big\|q_{ji}^n(U^n_t)-q_{ji}(U^n_t)\big\|_{L^2}^2\nonumber\\
&\leq&\!\Big\|\sum_{k=1}^{p(n)}\Bigl(\mathbb{I}_{\,D_{k,n}} Q^{k,n}_{ij}(U^n_t)\Bigr)-q_{ij}(U^n_t)\Big\|_{L^2}^2\nonumber\\
&=&\!\sum_{k=1}^{p(n)}\int_{D_{k,n}}\big|Q^{k,n}_{ij}(U^n_t)-q_{ij}(U^n_t(x))\big|^2\,\dif x\,.\phantom{xxxxx}
\end{eqnarray}
For the last equality we have used that the summands are mutually orthogonal in $L^2(D)$. Next we estimate each of the remaining integrals in \eqref{estimate_example_gen_convergence_0} using the Lipschitz continuity of $q_{ij}$ and Poincar\'e's inequality in $L^2(D_{k,n})$, i.e.,
\begin{eqnarray*}
\lefteqn{\int_{D_{k,n}}\!\Big|q_{ij}\Bigl(\frac{1}{|D_{k,n}|}\int_{D_{k,n}}\!\! U^n_t(y)\,\dif y\Bigr)-q_{ij}(U^n_t(x))\Big|^2\dif x}\\
&&\phantom{xxxxxxxxxxxxxxxxxxxx}\leq\ L^2\int_{D_{k,n}}\!\Big|\frac{1}{|D_{k,n}|}\int_{D_{k,n}} \!\!U^n_t(y)\,\dif y-U^n_t(x)\Big|^2\dif x\\[2ex]
&&\phantom{xxxxxxxxxxxxxxxxxxxx}\leq\ L^2\pi^{-2}\textnormal{diam}(D_{k,n})^2\,\|\nabla U^n_t\|_{L^2(D_{k,n})}^2\,,
\end{eqnarray*}
where $\|\nabla U^n_t\|_{L^2(D_{k,n})}$ is the norm in $L^2(D_{k_n})$ of the Euclidean norm of the gradient vector $\nabla U^n_t$. Here we have employed that for convex domains the optimal Poincar\'e constant is given by $\pi^{-1}\textnormal{diam}(D_{k,n})$ \cite{Payne}. Hence, a summation over all $k=1,\ldots, p(n)$ and employing the estimate $\|\nabla U^n_t\|_{L^2}^2\leq \|U^n_t\|_{H^1}^2$ yields
\begin{equation*}
\big\|q_{ji}^n(U^n_t)-q_{ji}(U^n_t)\big\|_{L^2}^2\,\leq \delta_+(n)^2\,L^2\pi^{-2}\|U^n_t\|_{H^1}^2\,.
\end{equation*}
Integrating over $(0,T)$ we therefore obtain for \eqref{arnold_approach_the_diff_to_be_estimated} the estimate
\begin{eqnarray*}
\lefteqn{\int_0^T \Big\|\bigl[\mathcal{A}(\phi,z^n_j(\cdot))_{L^2}\bigr](U^n_t,\Theta^n_t)-F_j(z^n(\Theta^n_t),U^n_t)\Big\|_{L^2}^2\,\dif t }\\
&&\phantom{xxxxxxxxxxxxxxxxxxxxxxxxxx}\leq\ \delta_+(n)^2\,L^2\pi^{-2}\,2(m-1)\int_0^T\|U^n_t\|_{H^1}^2\,\dif t\,.
\end{eqnarray*}
Finally, the norm $\|U^n_t\|_{L^2((0,T),H^1)}$ is bounded independently of $n\in\mathbb{N}$ by a deterministic constant due to Proposition \ref{proposition_in_examepl_proof}(b). This upper bound holds for almost all paths of the PDMPs $(U^n_t,\theta^n_t)_{t\geq 0}$ and thus there exists a constant $C>0$ independent of $n$ such that
\begin{equation}
\int_0^T\Big\|\bigl[\mathcal{A}(\phi,z^n_j(\cdot))_{L^2}\bigr](U^n_t,\Theta^n_t)-F_j(z^n(\Theta^n_t),U^n_t)\Big\|^2_{L^2}\,\dif t\,\leq\,\delta_+(n)^{2}\,C
\end{equation}
almost surely. Due to the assmuption \eqref{LLN_compartmental_model_part_conds} the estimate in the right hand side converges to zero for $n\to\infty$ and the convergence \eqref{finite_var_bound} follows. \medskip

(iv)\quad Next we consider convergence in probability of the martingale part. To this end we employ Lemma \ref{martingale_bound_1}. As before we denote by $\theta^n_{k,i\to j}$ the channel configuration that arises from the configuration $\theta^n$ if a channel in compartment $D_{k,n}$ switches from state $i$ to state $j$. Then it holds that
\begin{eqnarray*}
\lefteqn{\Lambda^n(U^n_s,\Theta^n_s)\int_{K_n} \|z^n_i(\xi)-z^n_i(\Theta^n_s)\|_{L^2}^2\,\mu^n\bigl((U^n_s,\Theta^n_s),\dif\xi\bigr)}\\
&=& \sum_{k=1}^{p(n)}\sum_{j\neq i}\Bigl(\|z^n_i(\theta^n_{k,i\to j}(s))-z^n_i(\Theta^n_s)\|_{L^2}^2\, Q^{k,n}_{ij}(U^n_s)\,\Theta^n_i(s)\\
&&\phantom{xxxxxxxxxxxxxxxxxxxxxxxx} +  \|z^n_i(\theta^n_{k,j\to i}(s))-z^n_i(\Theta^n_s)\|_{L^2}^2\, Q^{k,n}_{ji}(U^n_s)\,\Theta^n_j(s) \Bigr)\\
&\leq& \nbar q \sum_{k=1}^{p(n)}\frac{|D_{k,n}|}{l(k,n)^2}\sum_{j\neq i}\bigl(\Theta^n_i(s) +\Theta^n_j(s)\bigr)\,.
\end{eqnarray*}
This implies that
\begin{equation*}
\EX^n\,\int_0^t\Bigl[\Lambda^n(U^n_s,\Theta^n_s)\int_{K_n} \|z^n_i(\xi)-z^n_i(\Theta^n_s)\|_{L^2}^2\,\mu^n\bigl((U^n_s,\Theta^n_s),\dif\xi\bigr)\Bigr]\,\dif s=\landau\bigl(\ell_-(n)^{-1}\bigr)\,.
\end{equation*}
Hence, under condition \eqref{LLN_compartmental_model_part_conds} the assumption of Lemma \ref{martingale_bound_1} is satisfied.\medskip

(v)\quad Finally, we extend the convergence in probability to convergence in the mean for the individual components being in the space $L^2((0,T),L^2)$, see the remark following Theorem \ref{LLN}. First of all note that the components are bounded, i.e.,
\begin{equation*}
\|U^n_t-u(t)\|_{L^2}\leq 2\nbar u\,|D|,\qquad \|z^n_i(\Theta_t^n)-p_i(t)\|_{L^2}\leq 2\,|D|\,. 
\end{equation*}
Therefore it holds that
\begin{equation*}
\|X^n-X\|:=\|U^n-u\|_{L^2((0,T),L^2)}+\sum_{i=1}^m\|z^n_i(\Theta^n)-p_i\|_{L^2((0,T),L^2)}\,\leq\, C
\end{equation*}
for a suitable deterministic bound $C<\infty$ independent of $n\in\mathbb{N}$. Then for all $\eps_0>0$ it holds that
\begin{eqnarray*}
\EX^n\|X_n-X\|&=&\EX^n\bigl[\|X_n-X\|\,\mathbb{I}_{[\|X_n-X\|\leq\eps_0]}\bigr]+\EX^n\bigl[\|X_n-X\|\,\mathbb{I}_{[\|X_n-X\|>\eps_0]}\bigr]\\[2ex]
&\leq& \eps_0 + M\,\Pr^n\bigl[\|X_n-X\|>\eps_0\bigr]\,.
\end{eqnarray*}
Next choose $\eps_0<\eps/2$ and note that due to the convergence in probability there exists an $N_\eps$ such that $M\,\Pr^n\bigl[\|X_n-X\|>\eps_0\bigr]\leq \eps/2$ for all $n>N_\eps$. Hence, for every $\eps>0$ there exists an $N_\eps$ such that $\EX^n\|X_n-X\|<\eps$ for all $n>N_\eps$. Convergence in the mean is proven.


\subsection{Proof of Theorem \ref{CLT_compartmental_model}  (Conditions for the CLT)}

In order to prove Theorem \ref{CLT_compartmental_model} we employ Theorem \ref{clt_by_local_martingale} for the space $E=H^{-2s}(D)$ where $s$ is the smallest integer such that $s>d/2$. We usually employ the simpler notation $E$ and $\mathcal{E}=E^m$ throughout the proof, however occasionally switch to $H^{-2s}(D)$ if we want to emphasise the specific choice of the Hilbert space. The reason choosing this particular integer $s$ is that it is the smallest integer such that the embedding of $H^{2s}(D)$ into $H^s(D)$ is of Hilbert-Schmidt type\footnote{The embedding of a Hilbert space $X$ into another Hilbert space $H$ is of \emph{Hilbert-Schmidt type} if $\sum_{k\in\mathbb{N}} \|\varphi_k\|_H^2<\infty$ for every orthonormal basis $(\varphi_k)_{k\in\mathbb{N}}$ of $X$.} due to Maurin's Theorem \cite[Thm.~6.61]{Adams} and $H^{s}(D)$ is embedded in $C(\nbar D)$ due to the Sobolev Embedding Theorem. These two properties are essential in order to prove the conditions \eqref{first_clt_section_theorem_2nd_mom_cond} -- \eqref{clt_martingale_cond_1} of Theorem \ref{clt_by_local_martingale}. 
All conditions except \eqref{clt_martingale_cond_2}, which establishes the convergence of the quadratic variation, are straightforward consequences of the assumptions of the theorem. These are shown in part (i) of the subsequent proof. For condition \eqref{clt_martingale_cond_2} more involved estimation procedures are necessary which are presented in part (ii). 
\medskip

(i)\quad We first show condition \eqref{first_clt_section_theorem_2nd_mom_cond}. As in the preceding section $\theta^{n}_{k,i\to j}$ denotes the element of $K_n$ that differs from $\theta^n$ by one channel in the $k$th compartment being in state $i$ instead of state $j$. Then, the Sobolev Embedding Theorem yields the estimate
\begin{equation}\label{proof_comp_clt_jump_bounds}
\|z^n_i(\theta^n_{k,i\to j}(t))-z^n(\Theta^n_t)\|_{E}=\sup_{\|\phi\|_{H^{2s}}=1}\big|l(k,n)^{-1}\langle\phi,\mathbb{I}_{D^{k,n}}\rangle_{H^{2s}}\big| \leq \frac{C}{l(k,n)}\,|D^{k,n}|\,,
\end{equation}
where $C$ is a constant resulting from the continuous embedding of $H^{2s}(D)$ into $C(\nbar D)$. Using this estimate for the jump heights in the space $H^{-2s}(D)$ we find similarly to part (iv) of the proof of Theorem \ref{LLN_compartmental_models} that it holds
\begin{equation*}
\alpha_n\,\EX^n\int_0^T\Bigl[\Lambda^n(U^n_t,\Theta^n_t)\int_{K_n}\|z^n(\xi)-z^n(\Theta^n_t)\|_\mathcal{E}^2\,\mu^n\bigl((U^n_t,\Theta^n_t),\dif\xi\bigr)\,\dif t\Bigr]= \landau(1)\,.
\end{equation*}
Hence, condition \eqref{first_clt_section_theorem_2nd_mom_cond} is satisfied. Moreover, we infer from \eqref{proof_comp_clt_jump_bounds} that the rescaled jump sizes are bounded almost surely uniformly, i.e., condition \eqref{clt_martingale_cond_1} is satisfied. Particularly, it holds that $\sqrt{\alpha_n}\,\|z^n(\theta^n_{k,i\to j})-z^n(\theta^n)\|_{E}=\landau(\ell_-(n)^{-1/2})$. This implies that for arbitrary $\beta>0$ and any $\Phi\in (H^{2s}(D))^m$ there exists $N_\beta$ such that for all $n\geq N_\beta$
\begin{equation*}
\int_{\sqrt{\alpha_n}|\langle\Phi,z^n_i(\xi)-z^n_i(\theta^n)\rangle_\mathcal{E}|>\beta} \mu^n\bigl((u,\theta^n),\dif\xi\bigr)\, =\, 0
\end{equation*}
holds for all values $(u,\theta^n)$ the PDMP attains. Therefore, by dominated convergence we infer that also  condition \eqref{conv_gen_support_thm} is satisfied. It remains to consider condition \eqref{alternative_tightness_cond}. To this end let $(\varphi_k)_{k\in\mathbb{N}}$ be an orthonormal basis in $(H^{2s}(D))^m$, where $\varphi_k=(\varphi_k^1,\ldots,\varphi_k^m)$ and hence $(\varphi_k^{i})_{k\in\mathbb{N}}$ is an orthonormal basis in $H^{2s}(D)$ for all $i=1,\ldots,m$. Then we obtain the estimate
\begin{eqnarray*}
\lefteqn{\langle\varphi_k,G^n(U^n_t,\Theta^n_t)\varphi_k\rangle_\mathcal{E}}\\
&=&\Lambda^n(U^n_t,\Theta^n_t) \int_{K_n} \Bigl(\sum_{i=1}^m \langle\varphi_k^i,z^n_i(\xi)-z^n_i(\Theta^n_t)\rangle_{H^{-2s}}\Bigr)^2\,\mu^n\bigl((U^n_t,\Theta^n_t),\dif \xi\bigr)\\
&\leq &m\sum_{i=1}^m \|\varphi_k^i\|_{H^s}^2\,\Bigl(\Lambda^n(U^n_t,\Theta^n_t) \int_{K_n} \|z^n_i(\xi)-z^n_i(\Theta^n_t)\|_{H^{-s}}^2\,\mu^n\bigl((U^n_t,\Theta^n_t),\dif \xi\bigr)\Bigr)\,.
\end{eqnarray*}
Here we have employed for the the individual summands in the right hand side that for $z^n_i(\xi)-z^n_i(\Theta^n_t)\in L^2(D)$ the duality pairing in $H^{2s}(D)$ equals the duality pairing in $H^s(D)$. Further, note that $\|z^n_i(\xi)-z^n_i(\theta^n_t)\|_{H^{-s}}$ satisfies an estimate analogous to \eqref{proof_comp_clt_jump_bounds} due to the continuous embedding of $H^s(D)$  in $C(\nbar D)$. Therefore we overall obtain that
\begin{equation*}
\alpha_n\,\langle\varphi_k,G^n(U^n_t,\Theta^n_t)\varphi_k\rangle_\mathcal{E}\leq C\,\sum_{i=1}^m\|\varphi_k^i\|_{H^{s}}^2  
\end{equation*}
for a suitable non-random constant $C$ independent of $n$. Finally, set $\gamma_k:=\sum_{i=1}^m\|\varphi_k^i\|_{H^{s}}^2$ then it holds that $\sum_{k\in\mathbb{N}}\gamma_k<\infty$ as the embedding $H^{2s}(D)\hookrightarrow H^s(D)$ is of Hilbert-Schmidt type. We infer that condition \eqref{alternative_tightness_cond} is satisfied. \medskip

(ii)\quad In the second part of the proof we establish the central condition \eqref{clt_martingale_cond_2} of the convergence of the quadratic variation. For simplicity of notation we omit the time argument of the PDMP paths and the deterministic solution as the following estimates hold for almost all $t$. First of all we expand the quadratic variation of the martingales into the finite sum
\begin{eqnarray*}
\lefteqn{\Lambda^n(U^n,\Theta^n)\int_{K_n}\langle\Phi,z^n(\xi)-z^n(\Theta^n)\rangle_\mathcal{E}^2\,\mu^n\bigl((U^n,\Theta^n),\dif\xi\bigr)}\\
&\hspace{-20pt}=&\hspace{-15pt}\sum_{j=1}^m\sum_{\substack{i=1\\ i\neq j}}^m\sum_{k=1}^{p(n)} \frac{\Theta^{k,n}_j}{l(k,n)^2}\,Q^{k,n}_{ji}(U^n) \langle\phi_j,\mathbb{I}_{\,D_{k,n}}\rangle_E^2 +\sum_{j=1}^m\sum_{\substack{i=1\\ i\neq j}}^m\sum_{k=1}^{p(n)} \frac{\Theta^{k,n}_i}{l(k,n)^2}\,Q^{k,n}_{ij}(U^n) \langle\phi_j,\mathbb{I}_{\,D_{k,n}}\rangle_E^2\\
&&\mbox{} -\sum_{\substack{i,j=1\\ i\neq j}}^m\,\sum_{k=1}^{p(n)} \Bigl(\frac{\Theta^{k,n}_i}{l(k,n)^2}\,Q^{k,n}_{ij}(U^n)+\frac{\Theta^{k,n}_j}{l(k,n)^2}\,Q^{k,n}_{ji}(U^n)\Bigr) \langle\phi_i,\mathbb{I}_{\,D_{k,n}}\rangle_E\langle\phi_j,\mathbb{I}_{\,D_{k,n}}\rangle_E\,.
\end{eqnarray*}
We find that the terms in this summation match with the integral terms in the definition of the operator $G(u,p)$ in \eqref{def_example_cov_operator}. Thus, due to the triangle inequality it suffices to consider the convergence of the single summands separately, i.e., we have to consider, on the one hand, for all $j=1,\ldots,m$ and $i\neq j$ the differences
\begin{equation}\label{proof_compartmental_model_var_sum_1} 
\Big|\int_D p_j(x)\,q_{ji}(u(x))\,\phi_j^2(x)\,\dif x\,-\,\alpha_n\sum_{k=1}^{p(n)} \frac{\Theta^{k,n}_j}{l(k,n)^2}\,Q^{k,n}_{ji}(U^n) \langle\phi_j,\mathbb{I}_{\,D_{k,n}}\rangle_E^2\Big|
\end{equation}
and, on the other hand, for all $i,j=1,\ldots,m$ such that $i\neq j$ the differences
\begin{equation}\label{proof_compartmental_model_var_sum_2} 
\Big|\int_D p_i(x)\,q_{ij}(u)\,\phi_i(x)\phi_j(x)\,\dif x - \alpha_n\sum_{k=1}^{p(n)} \frac{\Theta^{k,n}_i}{l(k,n)^2}\,Q^{k,n}_{ij}(U^n) \langle\phi_i,\mathbb{I}_{\,D_{k,n}}\rangle_E\langle\phi_j,\mathbb{I}_{\,D_{k,n}}\rangle_E\Big|\,. 
\end{equation}
We next estimate these terms separately in parts (ii.1) and (ii.2). Finally, in part (ii.3) the estimates are combined to prove the convergence of the quadratic variation.

(ii.1)\quad A further application of the triangle inequality yields\begin{eqnarray}\label{example_cov_op_term_I}
\eqref{proof_compartmental_model_var_sum_1}&=& \Big| \int_D p_j(x)\,q_{ji}(u(x))\,\phi_j^2(x)\,\dif x\, -\, \int_D z^n_j(\Theta^n)(x)\,q_{ji}(U^n(x))\,\phi_j^2(x)\,\dif x\Big|\nonumber\\
&&\hspace{-35pt}\mbox{} +\Big|\sum_{k=1}^{p(n)} \frac{\Theta^{k,n}_j}{l(k,n)}\,\int_{D_{k,n}} q_{ji}(U^n(x))\,\phi_j^2(x) \,\dif x\,-\,\alpha_n\sum_{k=1}^{p(n)} \frac{\Theta^{k,n}_j}{l(k,n)^2}\,Q^{k,n}_{ji}(U^n) \langle\phi_j,\mathbb{I}_{\,D_{k,n}}\rangle_E^2 \Big|\,.\nonumber\\
\end{eqnarray}
We estimate the two resulting differences separately and obtain for the first term in the right hand side of \eqref{example_cov_op_term_I} the estimate
\begin{eqnarray}\label{example_cov_op_term_I_a}
\lefteqn{\Big|\int_D p_j(x)\,q_{ji}(u(x))\,\phi_j^2(x)\,\dif x\, -\, \int_D z^n_j(\Theta^n)(x)\,q_{ji}(U^n(x))\,\phi^2(x)\,\dif x \Big|}\nonumber\\
&&\phantom{xxxx}\leq\ \Big|\int_D p_j(x)\,q_{ji}(u(x))\,\phi_j^2(x)\,\dif x - \int_D z^n_j(\Theta^n)(x)\,q_{ji}(u(x))\,\phi^2(x)\,\dif x\Bigl|\nonumber\\
&&\phantom{xxxx\leq}\ \mbox{} +\Big|\int_D z^n_j(\Theta^n)(x)\,q_{ji}(u(x))\,\phi^2(x)\,\dif x - \int_D z^n_j(\Theta^n)(x)\,q_{ji}(U^n(x))\,\phi^2(x)\,\dif x \Big|\nonumber\\[2ex]
&&\phantom{xxxx}\leq\ \nbar q\,\|\phi_j\|_{L^\infty}^2\,\|p_j-z^n_j(\Theta^n)\|_{L^1}\, +\, L\,\|\phi_j\|_{L^\infty}^2\,\|u-U^n\|_{L^1}\,.
\end{eqnarray}
For the second term in the right hand side of \eqref{example_cov_op_term_I} we obtain by employing $\Theta^{k,n}_j/l(k,n)$ $\leq 1$ the estimate
\begin{equation}\label{example_cov_op_term_II}
\sum_{k=1}^{p(n)}\Big|\int_{D_{k,n}}\! q_{ji}(U^n(x))\,\phi_j^2(x) \,\dif x-\frac{\alpha_n}{l(k,n)}\,q_{ji}\Bigl(\frac{1}{|D_{k,n}|}\int_{D_{k,n}}\! U^n(x)\,\dif x\Bigr)\, \Bigl(\int_{D_{k,n}}\!\phi_j(x)\,\dif x\Bigr)^2\Big|
\end{equation}
and we continue estimating each summand therein separately. We begin employing the Mean Value Theorem to expand the rate function $q_{ji}$ in the integral in the left hand side such that
\begin{equation}\label{mean_value_theorem_expansion_in_the_quar_proof}
q_{ji}(U^n(x))=q_{ij}\Bigl(\frac{1}{|D_{k,n}|}\int_{D_{k,n}}\!\! U^n(y)\,\dif y\Bigr) + q_{ji}'(\vartheta^{k,n}(x))\Bigl(U^n(x)-\frac{1}{|D_{k,n}|}\int_{D_{k,n}}\!\! U^n(y)\,\dif y\Bigr),
\end{equation}
where $\vartheta^{k,n}(x)$ denotes an appropriate mean value. For now we omit the remainder term, i.e., the second term in the right hand side of \eqref{mean_value_theorem_expansion_in_the_quar_proof}, a consideration of which is deferred. Hence, we obtain for the absolute value in each summand in \eqref{example_cov_op_term_II} the estimate
\begin{equation*}
q_{ji}\Bigl(\frac{1}{|D_{k,n}|}\int_{D_{k,n}} U^n(y)\,\dif y\Bigr)\,\Big|\int_{D_{k,n}}\phi^2_j(x)\,\dif x - \frac{\alpha_n}{l(k,n)}\Bigl(\int_{D_{k,n}}\phi_j(x)\,\dif x\Bigr)^2\Big|\,.
\end{equation*}
We note that $q_{ji}$ is bounded by $\nbar q$ and continue estimating which yields 
\begin{eqnarray}
&\leq&\nbar q\, |D_{k,n}|\,\Big|\frac{1}{|D_{k,n}|}\int_{D_{k,n}}\phi_j^2(x)\,\dif x-\frac{\alpha_n |D_{k,n}|^2}{l(k,n)|D_{k,n}|}\Bigl(\frac{1}{|D_{k,n}|}\int_{D_{k,n}}\phi_j(x)\,\dif x\Bigr)^2\Big|\nonumber\\
&\leq& \nbar q\,\int_{D_{k,n}}\Bigl(\phi_j(x)-\frac{1}{|D_{k,n}|}\int_{D_{k,n}}\phi_j(y)\,\dif y\Bigr)^2\,\dif x\label{example_cov_op_term_III_1}\\
&&\mbox{}+\nbar q\,|D_{k,n}|\,\Big|\Bigl(1-\frac{\alpha_n |D_{k,n}|^2}{l(k,n)|D_{k,n}|}\Bigr)\,\Bigl(\frac{1}{|D_{k,n}|}\int_{D_{k,n}}\phi_j(x)\,\dif x\Bigr)^2\Big| \label{example_cov_op_term_III_1_2}
\end{eqnarray}
The term \eqref{example_cov_op_term_III_1} is estimated using Poincar\'e's inequality which yields an upper bound by\linebreak[4] $\nbar q\,\pi^{-2}\,\textnormal{diam}^2(D_{k,n})\,\|\nabla\phi\|^2_{L^2(D_{k,n})}$. For the terms \eqref{example_cov_op_term_III_1_2} a summation over all $k=1,\ldots,p(n)$ yields
\begin{equation}\label{example_cov_op_term_III_1_2_e}
\nbar q\,\sum_{k=1}^{p(n)} |D_{k,n}|\,\Big|1-\frac{\alpha_n |D_{k,n}|^2}{l(k,n)|D_{k,n}|}\Big|\Bigl(\frac{1}{|D_{k,n}|}\int_{D_{k,n}}\phi_j(x)\,\dif x\Bigr)^2
\leq \nbar q\,\Big|1-\frac{\ell_-(n)\,\nu_-(n)}{\ell_+(n)\,\nu_+(n)}\Big|\,\|\phi_j^n\|_{L^2}^2\,,
\end{equation}
where $\phi^n_j$ is a piecewise constant approximation to $\phi_j$ defined by
\begin{equation*}
\phi_j^n:=\sum_{k=1}^{p(n)}\Bigl(\frac{1}{|D_{k,n}|}\int_{D_{k,n}}\phi_j(x)\,\dif x\Bigr)\,\mathbb{I}_{D_{k,n}}\,. 
\end{equation*}
As $\phi_j^n$ converges to $\phi_j$ in $L^2(D)$ it holds that the sequence of norms converge, hence $\|\phi^n_j\|_{L^2}$ is a bounded sequence. Therefore the right hand side in \eqref{example_cov_op_term_III_1_2_e} is a componentwise product of convergent sequences. The sequence $|1-(\ell_-(n)\,\nu_-(n)/(\ell_+(n)\,\nu_+(n))|$ converges to zero, cf.~condition \eqref{example_CLT_add_condition}, thus the right hand side in \eqref{example_cov_op_term_III_1_2_e} converges to zero for $n\to\infty$.

Finally, it remains to consider the term arising from the remainder in the expansion of $q_{ji}$, see \eqref{mean_value_theorem_expansion_in_the_quar_proof}, inserted into \eqref{example_cov_op_term_II}. By assumption $q_{ji}'$ is bounded (by a constant $\nbar q$). Therefore we obtain an upper bound on the respective term by
\begin{eqnarray*}
\nbar q\,\|\phi_j\|_{L^\infty}^2\!\sum_{k=1}^{p(n)} \int_{D_{n,k}}\!\! \Big|U^n(x)-\frac{1}{|D_{k,n}|}\int_{D_{k,n}}\!\! U^n(y)\,\dif y\Bigr|\,\dif x
\!\!&\leq&\!\! \nbar q\,\|\phi_j\|_{L^\infty}^2\!\sum_{k=1}^{p(n)} \frac{\delta_+(n)}{2}\|\nabla U^n\|_{L^1(D_{n,k})}\\
\!\!&\leq&\!\! \nbar q\,\|\phi_j\|_{L^\infty}^2 \frac{\delta_+(n)}{2}\|\nabla U^n\|_{L^1}\,.
\end{eqnarray*}
Here we have employed the Poincar\'e inequality in $L^1$ with optimal Poincar\'e constant given by $\textnormal{diam}(D_{k,n})/2$ \cite{Acosta}.

A combination of these estimates yields an upper bound to \eqref{proof_compartmental_model_var_sum_1} by
\begin{equation}\label{result_ii_1} 
\eqref{proof_compartmental_model_var_sum_1}\ \leq\ C_\Phi\,\Bigl(\,\|p_j-z^n_j(\Theta^n)\|_{L^1}+ \|u-U^n\|_{L^1}+\delta_+(n)^2\|\nabla U^n\|_{L^1}+\delta^2_+(n)+\delta_+(n)+R(n)\,\Bigr)\,,
\end{equation}
where the term $R(n)$ is given by the right hand side of \eqref{example_cov_op_term_III_1_2_e} and converges to zero for $n\to\infty$. The constant $C_\Phi<\infty$ is a suitable deterministic constant independent of $n\in\mathbb{N}$ which depends on $\Phi\in (H^{2s}(D))^m$ via the norm in $H^s(D)$ of the components of $\Phi$.

(ii.2)\quad Next we consider the mixed terms \eqref{proof_compartmental_model_var_sum_2}. Analogously to part (ii.1) we apply the triangle inequality and obtain
\begin{eqnarray*}
\eqref{proof_compartmental_model_var_sum_2}
&\!\!\leq\!&\!\! \Big|\int_D p_j(x)\,q_{ji}(u(x))\,\phi_j(x)\,\phi_i(x)\,\dif x\, -\, \int_D z^n_j(\Theta^n)(x)\,q_{ji}(U^n(x))\,\phi_j(x)\,\phi_i(x)\,\dif x\Big|\\
&& \mbox{} +\Big|\sum_{k=1}^{p(n)} \frac{\Theta^{k,n}_j}{l(k,n)}\,\int_{D_{k,n}} q_{ji}(U^n(x))\,\phi_j(x)\,\phi_i(x) \,\dif x\\
&&\phantom{xxxxxxxxxxxxxxxxx}\mbox{}-\,\alpha_n\sum_{k=1}^{p(n)} \frac{\Theta^{k,n}_j}{l(k,n)^2}\,Q^{k,n}_{ji}(U^n) \langle\phi_j,\mathbb{I}_{\,D_{k,n}}\rangle_{E} \langle\phi_i,\mathbb{I}_{\,D_{k,n}}\rangle_{E}\Big|
\end{eqnarray*}
As in (ii.1) we obtain for the first term in this right hand side an upper bound by
\begin{equation*}
\nbar q\,\|\phi_i\|_{L^\infty}\|\phi_j\|_{L^\infty}\|p_j-z^n_j(\Theta^n)\|_{L^1}+L\, \|\phi_i\|_{L^\infty}\|\phi_j\|_{L^\infty}\|u-U^n\|_{L^1}\,.
\end{equation*}
Also the second term is treated as in (ii.1), i.e., applying the Mean Value Theorem and estimating the resulting terms accordingly. In particular the remainder term is estimated completely analogously. Therefore, the only term we are left to estimate is
\begin{eqnarray}
&&\hspace{-25pt}\mbox{}\phantom{+}\nbar q\,|D_{k,n}|\,\Big|\frac{1}{|D_{k,n}|}\int_{D_{k,n}}\!\phi_i(x)\,\phi_j(x)\,\dif x-\Bigl(\frac{1}{|D_{k,n}|}\int_{D_{k,n}}\!\phi_i(x)\,\dif x\Bigr)\Bigl(\frac{1}{|D_{k,n}|}\int_{D_{k,n}}\!\phi_j(x)\,\dif x\Bigr)\Big|\nonumber
\\ &&
\label{example_cov_op_term_IV_2}\\[1ex]
&&\hspace{-25pt}\mbox{}+\nbar q\,|D_{k,n}|\,\Big|\Bigl(1-\frac{\alpha_n |D_{k,n}|^2}{l(k,n)|D_{k,n}|}\Bigr)\Bigl(\frac{1}{|D_{k,n}|}\int_{D_{k,n}}\phi_i(x)\,\dif x\Bigr)\Bigl(\frac{1}{|D_{k,n}|}\int_{D_{k,n}}\phi_j(x)\,\dif x\Bigr)\Big|\,.\nonumber \\
&&
\label{example_cov_op_term_IV_1}
\end{eqnarray}
First of all, using Young's inequality we obtain for the second term the estimate
\begin{equation}\label{example_cov_op_term_IV_1_e}
\eqref{example_cov_op_term_IV_1}\,\leq\, \frac{\nbar q}{2}\,\Big|1-\frac{\ell_-(n)\,\nu_-(n)}{\ell_+(n)\,\nu_+(n)}\Big|\,\bigl(\|\phi_i^n\|_{L^2}^2+\|\phi_j^n\|_{L^2}^2\bigr)\,,
\end{equation}
which converges to zero for $n\to\infty$.

We next estimate the term \eqref{example_cov_op_term_IV_2}. Firstly, we note that as in part (a) we find using Poincar\'e's inequality an upper bound to the term
\begin{equation}\label{the_term_which_is_upper_bound}
|D_{k,n}|\,\Bigl|\frac{1}{D_{k,n}}\int_{D_{k,n}} \bigl(\phi_i(x)-\phi_j(x)\bigr)^2\,\dif x-\Bigl(\frac{1}{D_{k,n}}\int_{D_{k,n}}\phi_i(x)-\phi_j(x)\,\dif x\Bigr)^2\Big|
\end{equation}
and the upper bound is proportional to $\delta_+(n)^2$. Next, expanding the two squared terms in \eqref{the_term_which_is_upper_bound} we find using the reverse triangle inequality that the term \eqref{the_term_which_is_upper_bound} is an upper bound to
\begin{equation*}
\left.\begin{array}{l}
\displaystyle|D_{k,n}|\,\bigg|\,\Big|\frac{1}{D_{k,n}}\int_{D_{k,n}}\phi_i(x)^2\,\dif x+\frac{1}{D_{k,n}}\int_{D_{k,n}}\phi_j(x)^2\,\dif x\\[3ex]
\displaystyle\phantom{xxxxxxxxxxxxxxxxxxxxxxx}\mbox{}-\Bigl(\frac{1}{D_{k,n}}\int_{D_{k,n}}\phi_i(x)\,\dif x\Bigr)^2-\Bigl(\frac{1}{D_{k,n}}\int_{D_{k,n}}\phi_j(x)\,\dif x\Bigr)^2\Big|\\[4ex]
\displaystyle\phantom{xx}\mbox{}-2\Big|\frac{1}{|D_{k,n}|}\int_{D_{k,n}}\phi_i(x)\phi_j(x)\,\dif x - \Bigl(\frac{1}{D_{k,n}}\int_{D_{k,n}}\phi_i(x)\,\dif x\Bigr)\Bigl(\frac{1}{D_{k,n}}\int_{D_{k,n}}\phi_j(x)\,\dif x\Bigr)\Big|\,\bigg|\,.
\end{array}\right.
\end{equation*}
Thus also this term possesses an upper bound which is proportional to $\delta_+(n)^2$. For $n\to\infty$ the upper bound converges to zero. As for $\delta_+(n)\to 0$ also the term spanning the first and second line converges to zero which was established in (ii.1), necessarily also the term in the third line converges to zero. Therefore we infer that the term \eqref{example_cov_op_term_IV_2} converges to zero proportional to $\delta_+(n)^2$.

Now, a combination of these estimates yields analogously to \eqref{result_ii_1} in (ii.1) that
\begin{equation}\label{result_ii_2} 
\eqref{proof_compartmental_model_var_sum_2}\ \leq\ C_\Phi\,\Bigl(\|p_j-z^n_j(\theta^n)\|_{L^2}+ \|u-U^n\|_{L^2}+\delta(n)^2\|\nabla U^n\|_{L^1}+\delta^2(n)+\delta(n)+R(n)\Bigr)\,.
\end{equation}
Here $R(n)$ is a term converging to zero for $n\to\infty$ arising from \eqref{example_cov_op_term_IV_1_e} and it is of the same type as the term $R(n)$ in (ii.1). The deterministic constant $C_\Phi$ is independent of $n\in\mathbb{N}$ and depends on $\Phi$ via the norm in $H^s(D)$ of the components of $\Phi$.

(ii.3)\quad A combination of the final results \eqref{result_ii_1} and \eqref{result_ii_2} in (ii.1) and (ii.2) yields that there exists a constant $C_\Phi<\infty$ such that for almost all $t$ it holds that
\begin{eqnarray*}
\lefteqn{\Big|\langle\Phi,G(u(t),p(t))\,\Phi\bigr\rangle_\mathcal{E}-\alpha_n\bigl\langle\Phi,G^n(U^n_t,\Theta^n_t)\,\Phi\bigr\rangle_\mathcal{E}\Big|}\\
&\hspace{-15pt}\leq& \hspace{-10pt}C_\Phi\Bigl(\,\sum_{i=1}^m\|p_i(t)\!-\!z^n_i(\Theta^n_t)\|_{L^2}\!+\! \|u(t)\!-\!U^n(t)\|_{L^2}\!+\!\delta(n)^2\|\nabla U^n\|_{L^2}\!+\!\delta^2(n)\!+\!\delta(n)\!+\!R(n)\Bigr) 
\end{eqnarray*}
Here we have also employed the continuous embedding of $L^2(D)\hookrightarrow L^1(D)$. We next square both sides of this inequality and integrate over $(0,T)$. Afterwards we take the square root of the integral terms and further take the expectation of the resulting inequality. Finally, appropriate applications of Jensen's inequality yields that
\begin{eqnarray}\label{compartmental_model_CLT_proof_squared_term_estimate}
\lefteqn{\int_0^T\EX^n\Big|\langle\Phi,G(u(t),p(t))\,\Phi\bigr\rangle_\mathcal{E}-\alpha_n\bigl\langle\Phi,G^n(U^n_t,\Theta^n_t)\,\Phi\bigr\rangle_\mathcal{E}\Big|\,\dif t}\\
&\hspace{-15pt}\leq& \hspace{-10pt} C_{\Phi,T}\,\Bigl(\delta^2(n)\!+\!\delta(n)\!+\!R(n)\!+\!\EX^n\Bigl[\|u\!-\!U^n\|_{L^2((0,T),L^2)}\!+\!\sum_{i=1}^m\|p_i\!-\!z^n_i(\Theta^n)\|_{L^2((0,T),L^2)}\Bigr]\Bigr)\nonumber
\end{eqnarray}
for an appropriate constant $C_{T,\Phi}<\infty$. Note that in order to arrive at the estimate \eqref{compartmental_model_CLT_proof_squared_term_estimate} we have further employed that the random term $\|\nabla U^n\|_{L^2((0,T),L^2)}$ can be estimated by a deterministic bound independent of $n\in\mathbb{N}$ due to Proposition \ref{proposition_in_examepl_proof} (b). Finally, due to the law of large numbers, i.e., Theorem \ref{LLN_compartmental_models}, the sequence of PDMPs converges to the deterministic limit in the mean. Hence the expectation in the right hand side in \eqref{compartmental_model_CLT_proof_squared_term_estimate} converges to zero for $n\to\infty$. Furthermore, $\delta_+(n)$ converges to zero by assumption \eqref{LLN_compartmental_model_part_conds}, as does the term $R(n)$. Thus, overall the right hand side in \eqref{compartmental_model_CLT_proof_squared_term_estimate} converges to zero. The convergence of the quadratic variation is proved which completes the proof of Theorem \ref{compartmental_example_cov_operators}.

\end{appendix}




\bibliographystyle{plain}
\bibliography{bibliography}

\end{document}